\newtheorem{theorem}{Theorem}
\newtheorem{proposition}[theorem]{Proposition}
\newtheorem{corollary}[theorem]{Corollary}
\newtheorem{lemma}[theorem]{Lemma}
\newtheorem*{lemma*}{Lemma}
\theoremstyle{definition}
\newtheorem{definition}[theorem]{Definition}
\newtheorem{defn}[theorem]{Definition}
\newtheorem{remark}[theorem]{Remark}
\newtheorem{example}[theorem]{Example}
\newtheorem*{example*}{Example}
\newcommand{\R}{\mathbb{R}}
\newcommand{\sign}{\operatorname{sign}}
\DeclareMathOperator{\diag}{diag}
\newcommand{\mcX}{\mathcal{X}}
\newcommand{\mcU}{\mathcal{U}}
\newcommand{\until}[1]{\{1,\dots, #1\}}
\newcommand{\subscr}[2]{#1_{\textup{#2}}}
\newcommand{\setdef}[2]{\{#1 \; | \; #2\}}
\newcommand{\bigsetdef}[2]{\big\{#1 \; | \; #2\big\}}
\newcommand{\Bigsetdef}[2]{\Big\{#1 \; | \; #2\Big\}}
\newcommand{\map}[3]{#1: #2 \rightarrow #3}
\newcommand{\real}{\ensuremath{\mathbb{R}}}
\newcommand{\realpositive}{\ensuremath{\mathbb{R}}_{>0}}
\newcommand{\realnonnegative}{\ensuremath{\mathbb{R}}_{\ge 0}}
\newcommand\oprocendsymbol{\hbox{$\triangle$}}
\newcommand\oprocend{\relax\ifmmode\else\unskip\hfill\fi\oprocendsymbol}
\DeclareSymbolFont{bbold}{U}{bbold}{m}{n}
\DeclareSymbolFontAlphabet{\mathbbold}{bbold}
\newcommand{\vect}[1]{\mathbbold{#1}}
\newcommand{\vectorones}[1][]{\vect{1}_{#1}}
\newcommand{\vectorzeros}[1][]{\vect{0}_{#1}}
\newcommand{\ds}{\displaystyle}
\newcounter{saveenum}
\newcommand*{\mydoi}[1]{\href{http://dx.doi.org/#1}{\includegraphics[width=.75em]{doi.png}}}
\newcommand{\norm}[2]{\|#1\|_{#2}}
\newcommand{\Iinfty}{I_{\infty}}
\newcommand{\osL}{\operatorname{osL}}
\newcommand{\id}{\operatorname{Id}}
\newcommand{\jac}[1]{D\mkern-2.5mu{#1}}
\newcommand{\jacave}[1]{\overline{D\mkern-3mu{#1}}}
\newcommand{\WSIP}[2]{\left\llbracket{#1}, {#2}\right\rrbracket}
\newcommand{\GB}{Gr\"onwall\xspace}
\newcommand{\seminorm}[1]{{\left\vert\kern-0.25ex\left\vert\kern-0.25ex\left\vert #1 
		\right\vert\kern-0.25ex\right\vert\kern-0.25ex\right\vert}}
\DeclareMathOperator{\kernel}{Ker}
\newcommand{\kerperp}[1]{\kernel{#1}^\perp}
\newcommand{\semimeasure}[1]{\mu_{\seminorm{\cdot}}\kern-0.5ex\left(#1\right)}
\begin{document}

\title{Non-Euclidean Contraction Theory \\ for Robust Nonlinear Stability\thanks{This material is based upon work supported by the National Science Foundation Graduate Research Fellowship under Grant No.\ 2139319, AFOSR grant FA9550-22-1-0059, and the Defense Threat Reduction Agency
under Contract No. HDTRA1-19-1-0017.}}

\author{Alexander Davydov, \IEEEmembership{Student~Member,~IEEE}, Saber Jafarpour, \IEEEmembership{Member,~IEEE}, and Francesco Bullo, \IEEEmembership{Fellow,~IEEE}
\thanks{Authors are with the Department of Mechanical Engineering and the Center for Control, Dynamical Systems, and Computation, University of California, Santa Barbara, 93106-5070, USA. {({\tt \{davydov, saber, bullo\}@ucsb.edu})}}
}
\maketitle
\thispagestyle{empty}
\pagestyle{empty}

\begin{abstract}
  %%  In this paper, we study necessary and sufficient conditions for
  %%  contraction and incremental stability of dynamical systems with respect
  %%  to non-Euclidean norms.
  %%
  %%  First, we introduce weak pairings as a framework to study contractivity
  %%  with respect to arbitrary norms, and characterize their properties. We
  %%  introduce and study the sign and max pairings for the $\ell_1$ and
  %%  $\ell_\infty$ norms, respectively.
  %%
  %%  Using weak pairings, we then establish five equivalent characterizations for
  %%  contraction, including the one-sided Lipschitz condition for the vector field
  %%  as well as matrix measure and Demidovich conditions for the
  %%  corresponding Jacobian.
  %%
  %%  Third, we extend our contraction framework in two directions: we prove
  %%  equivalences for contraction of continuous vector fields and we formalize
  %%  the weaker notion of equilibrium contraction, which ensures exponential
  %%  convergence to an equilibrium.
  %%
  %%  Finally, as an application of the general theory, we provide (i)
  %%  incremental input-to-state stability and finite input-state gain
  %%  properties for contracting systems, and (ii)
  %%
  %%  a general theorem about the Lipschitz interconnection of contracting
  %%  systems, whereby the Hurwitzness of a gain matrix implies
  %%  the contractivity of the interconnected system.
  %%  
  %% a hierarchical analysis approach to establish contraction properties for
  %% interconnected systems.
  
  We study necessary and sufficient conditions for contraction and
  incremental stability of dynamical systems with respect to
  non-Euclidean norms. First, we introduce weak pairings as a framework
  to study contractivity with respect to arbitrary norms, and
  characterize their properties. We introduce and study the sign and max
  pairings for the $\ell_1$ and $\ell_\infty$ norms, respectively. Using weak
  pairings, we establish five equivalent characterizations for
  contraction, including the one-sided Lipschitz condition for the vector
  field as well as \textcolor{black}{logarithmic norm} and Demidovich conditions for the
  corresponding Jacobian. Third, we extend our contraction framework in
  two directions: we prove equivalences for contraction of continuous
  vector fields and we formalize the weaker notion of equilibrium
  contraction, which ensures exponential convergence to an equilibrium.
  Finally, as an application, we provide (i) incremental input-to-state
  stability and finite input-state gain properties for contracting
  systems, and (ii) a general theorem about the Lipschitz interconnection
  of contracting systems, whereby the Hurwitzness of a gain matrix
  implies the contractivity of the interconnected system.
\end{abstract}

\section{Introduction}

\begin{table*}[t]\centering
	\normalsize
	\resizebox{1\textwidth}{!}{\begin{tabular}{
				p{0.20\linewidth}
				p{0.37\linewidth}
				p{0.52\linewidth}
			}
			\textcolor{black}{Log norm}-bounded & Demidovich & One-sided Lipschitz \\
			Jacobian &  condition &  condition \\
			\hline
			\rowcolor{LightGray}    & &  \\[-1ex]
			\rowcolor{LightGray}
			$ \ds \mu_{2,P^{1/2}}(\jac{f}(x))\leq b$
			& $\ds P \jac{f}(x) + \jac{f}(x)^\top  P \preceq 2 b P $
			& $\ds (x-y)^\top  P \big( f(x) - f(y) \big) \leq b \norm{x-y}{2,P^{1/2}}^2$
			\\[2ex]
			\rowcolor{White} && \\[-1ex]
			\rowcolor{White}
			$ \ds \mu_{p,R}(\jac{f}(x))\leq b$
			& $\ds (Rv \circ |Rv|^{p-2})^\top R\jac{f}(x)v \leq b\|v\|_{p,R}^p $
			& $\ds ((R(x - y)) \circ |R(x-y)|^{p-2})^\top R(f(x) - f(y)) \leq b\|x -
			y\|_{p,R}^p$
			\\[2ex]
			\rowcolor{LightGray} && \\[-1ex]
			\rowcolor{LightGray}
			$\ds \mu_{1,R}(\jac{f}(x))\leq b$
			&  $\ds \sign(Rv)^{\top} R\jac{f}(x) v\le b \norm{v}{1,R}$
			&  $\ds \sign(Rx-Ry)^{\top} R(f(x) - f(y))\le b \norm{x-y}{1,R}$
			\\[2ex]
			\rowcolor{White} && \\[-1ex]
			\rowcolor{White}
			$\ds \mu_{\infty,R}(\jac{f}(x))\leq b$
			&  $\ds     \max_{i\in \Iinfty(Rv)}\! \left(R\jac{f}(x) v\right)_i(Rv)_i
			\le b \norm{v}{\infty,R}^2$ 
			&
			$\ds\max_{i\in\Iinfty(Rx-Ry)} \! (R\left(f(x)-f(y)\right))_i ((Rx)_i-(Ry)_i) \leq b \norm{x-y}{\infty,R}^2$
			\\[2ex] \hline
			&& \\[-1ex] 
	\end{tabular}}
	\caption{Table of contraction equivalences, that is, equivalences
          between measure bounded Jacobians, Demidovich and one-sided
          Lipschitz conditions. $\map{f}{\real^n}{\real^n}$ is a
          continuously differentiable vector field with Jacobian
          $\jac{f}$. Each row contains three equivalent statements, to be
          understood for all $x,y\in\real^n$ and all $v\in\real^n$. \textcolor{black}{For $p \in [1,\infty]$, the norm $\|\cdot\|_{p,R}$ is given by $\|x\|_{p,R} = \|Rx\|_p$, where $\|\cdot\|_p$ is the $\ell_p$ norm and $\mu_{p,R}(\cdot)$ is the corresponding logarithmic norm.} The
          function $\map{\sign}{\real^n}{\{-1,0,1\}^n}$ is the entrywise
          $\sign$ function, $\circ$ is the entrywise product, the absolute
          value and power of a vector are applied entrywise. We adopt the
          shorthand $\Iinfty(v) =
          \setdef{i\in\until{n}}{|v_i|=\norm{v}{\infty}}$. The matrix $P$
          is positive definite and the matrix $R$ is
          invertible.} \label{table:Demidovich}
\end{table*}

\textit{Problem description and motivation:} A vector field is contracting
if its flow map is a contraction or, equivalently, if any two solutions
approach one another exponentially fast. Contracting systems feature
highly-ordered asymptotic behavior. First, initial conditions are
forgotten.  Second, a unique equilibrium is globally exponential stable
when the vector field is time-invariant and two natural Lyapunov functions
are automatically available (i.e., the distance to the equilibrium and the
norm of the vector field). Third, a unique periodic solution is globally
exponentially stable when the vector field is periodic; in other words,
contracting system entrain to periodic inputs. Fourth and last, contracting
systems enjoy natural robustness properties such as input-to-state
stability and finite input-state gain in the presence of (Lipschitz
continuous) unmodeled dynamics.  Because of these highly-ordered and
desirable behaviors, contracting systems are of great interest for
engineering problems.

Contraction theory aims to combine, in a unified coherent framework,
results from Lyapunov stability theory, incremental stability, fixed point
theorems, monotone systems theory, and the geometry of Banach, Riemannian
and Finsler spaces.
Classical approaches primarily study contraction with respect to the
$\ell_2$ norm for continuously differentiable vector fields.
However, recent works have shown that stability can be studied more
systematically and efficiently using non-Euclidean norms (e.g., $\ell_1$,
$\ell_\infty$ and polyhedral norms) for large classes of network systems,
including biological transcriptional systems~\cite{GR-MDB-EDS:10a},
Hopfield neural networks~\cite{HQ-JP-ZBX:01}, chemical reaction
networks~\cite{MAAR-DA:16}, traffic
networks~\cite{SC-MA:15,GC-EL-KS:15,SC:19}, multi-vehicle
systems~\cite{JM-GR-RS:19}, and coupled
oscillators~\cite{GR-MDB-EDS:13,ZA-EDS:14}.
Moreover, for large-scale systems, error analysis based on the
$\ell_{\infty}$ norm may more accurately capture the effect of bounded
perturbations.
As compared with the $\ell_2$ norm, there is only limited work on
non-Euclidean contraction theory.

It is well known that contraction with respect to the $\ell_2$ norm is
established via a test on the Jacobian of the vector field; it is also true
however that an integral (derivative-free) test on the vector field itself
is equivalent.
While some differential tests are available for non-Euclidean norms, much
less is known about the corresponding integral tests.  We note that
computing Jacobians for large-scale networks may be computationally
intensive and so derivative-free contraction tests are desirable.
In this paper, we aim to characterize differential and integral tests for
arbitrary norms, paying special attention to the $\ell_1$ and $\ell_\infty$
norms, and provide a unifying framework for differential and integral
tests.

\textit{Literature review:} Contraction mappings in dynamical systems via
\textcolor{black}{logarithmic norms} have been studied extensively and can be traced back to
Lewis \cite{DCL:49}, Demidovich \cite{BPD:61} and Krasovski\u {\i}
\cite{NNK:63}. \textcolor{black}{Logarithmic norms} and numerical methods for differential
equations have been studied by Dahlquist~\cite{GD:58} and Lozinskii
\cite{SML:58} in as early as 1958; see also the influential survey by
Str{\"o}m~\cite{TS:75}.  Finally, \textcolor{black}{logarithmic norms} were applied to control
problems by Desoer and Vidyasagar in~\cite{CAD-HH:72,CAD-MV:1975,MV:02} and
contraction theory was first introduced by Lohmiller and Slotine
in~\cite{WL-JJES:98}. Since then, numerous generalizations to contraction
theory have been proposed including partial contraction~\cite{WW-JJES:05}, \textcolor{black}{contraction of stochastic differential equations~\cite{QCP-NT-JJES:09}, contraction in differential algebraic equations~\cite{IH-GS:02}}, 
contraction on Riemannian and Finsler
manifolds,~\cite{JWSP-FB:12za,FF-RS:14}, contraction for
PDEs~\cite{ZA-YS-MA-EDS:14}, transverse contraction~\cite{IRM-JJES:14},
contraction after short transients~\cite{MM-EDS-TT:16}, weak and
semi-contraction~\cite{SJ-PCV-FB:19q}, \textcolor{black}{and $k$-contraction, i.e., contraction of $k$-dimensional bodies~\cite{CW-IK-MM:22}}.

While the work of Lohmiller and Slotine explored differential conditions
for contraction for the $\ell_2$ norm, related integral conditions have
been studied in the literature under such various names as the
\emph{one-sided Lipschitz condition} in~\cite{EH-SPN-GW:93}, the \emph{QUAD
  condition} in~\cite{WL-TC:06}, the \emph{nonlinear
  measure}~\cite{HQ-JP-ZBX:01}, the \emph{dissipative Lipschitz
  condition}~\cite{TC-PEK:05}, and \emph{incremental quadratic stability}
in~\cite{LDA-MC:13}. A related unifying concept is the \emph{logarithmic
  Lipschitz constant}, advocated in~\cite{GS:06,ZA-EDS:13}.  Moreover, the
key idea appears as early as \cite{LC-DG:76}, whereby minus the vector
field is called \emph{uniformly increasing} and in the work on
discontinuous differential equations, see~\cite[Chapter~1, page~5]{AFF:88}
and references therein. Comparisons between the Lipschitz conditions, the
QUAD condition, and contraction are detailed in~\cite{PD-MdB-GR:11}, see
also~\cite[Section~1.10, Exercise~6]{EH-SPN-GW:93}.

Tests for contraction with respect to non-Euclidean non-differentiable
norms have not been widely studied.  Early results on compartmental systems
include~\cite[Theorem~2]{HM-SK-YO:78}
and~\cite[Appendix~4]{JAJ-CPS:93}. The $\ell_1$ integral test is used to
study neural networks in~\cite{HQ-JP-ZBX:01} and traffic networks
in~\cite{GC-EL-KS:15}. Recent work~\cite{SC:19} establishes that the
$\ell_1$ and $\ell_\infty$ norms are well suited to study contraction of
monotone systems. A comprehensive understanding of connections between
differential and integral conditions for these norms is desirable.

Aminzare and Sontag first drew connections between contraction theory and
so-called ``semi-inner products" in~\cite{ZA-EDS:13}, where they give
conditions for contraction in $L^p$ spaces. Since then, they have explored
contraction with respect to arbitrary norms in~\cite{ZA-EDS:14,ZA-EDS:14b},
and~\cite{ZA:15}, where arbitrary norms are used to study synchronization
of diffusively coupled systems and contractivity of reaction diffusion
PDEs.  Most notably, in \cite[Proposition 3]{ZA-EDS:14b} necessary and
sufficient conditions for contraction are given using Deimling pairings
(see~\cite[Chapter~3]{SSD:04} for more details on Deimling pairings). This
paper builds upon these underappreciated works and underutilized
connections.

\textit{Contributions:} Our first contribution is the definition of
\emph{weak pairings} as a generalization of the classic Lumer pairings, as
introduced in~\cite{GL:61,JRG:67}. We study various properties of weak
pairings, including a useful \emph{curve norm derivative formula}
applicable to dynamical systems analysis. Additionally, we establish a key
relationship between weak pairings and \textcolor{black}{logarithmic norms}, generalizing a
result by Lumer in~\cite{GL:61}; we refer to this relationship as
\emph{Lumer's equality}.  For $\ell_p$ norms, $p\in\{1,\infty\}$, we present
and characterize novel convenient choices for weak pairings: the sign
pairing for the $\ell_1$ norm and the max pairing for the $\ell_\infty$
norm.  We argue that, due to their connection with \textcolor{black}{logarithmic norms}, weak
pairings are a broadly-applicable tool for contraction analysis.

Our second contribution is proving five equivalent characterizations of
contraction for continuously differentiable vector fields on $\real^n$ with
respect to arbitrary norms. Using the language of weak pairings, we prove
the equivalence between differential and integral tests for contraction;
this result generalizes the known $\ell_2$ norm results to
non-differentiable norms such as the $\ell_1$ and $\ell_\infty$ norms. We
show that three of the five equivalences capture the \textcolor{black}{logarithmic norm}
condition, the differential condition on the vector field (referred to as
the \emph{Demidovich condition}), and the integral condition (referred to
as the \emph{one-sided Lipschitz condition}). These results
generalize~\cite[Proposition~3]{ZA-EDS:14b} in the sense that (i) we draw
an additional connection between the \textcolor{black}{logarithmic norm} of the Jacobian and the
weak pairing and (ii) use this connection and our sign and max pairings to
write the explicit differential and integral conditions in
Table~\ref{table:Demidovich}.

Our third contribution is the extension of contraction theory to vector
fields that are only continuous.  This extension demonstrates that
contraction can be understood as a property of the vector field,
independent of its Jacobian. In other words, this extension establishes the
importance of weak pairings over classical contraction approaches based on
the Jacobian of the vector field and the stability of the linearized
system.

Our fourth contribution is the formalization of \emph{equilibrium
  contraction}, a weaker form of contraction where all trajectories
exponentially converge to an equilibrium. This notion has been explored for
example, in~\cite[Chapter~2, Theorem~22]{MV:02}
%,~\cite[Theorem~2.7]{AF-AI-GS-JJT:07},
and~\cite[Theorem~1]{WW-JJES:05}.
These approaches establish global exponential convergence under two
conditions: the vector field can be factorized as $f(t,x) = A(t,x)x$ and
the \textcolor{black}{logarithmic norm} of $A(t,x)$ is uniformly negative.
Our treatment of equilibrium contraction demonstrates that these two
conditions are only sufficient, whereas we provide a necessary and
sufficient characterization based upon the one-sided Lipschitz condition.

Our fifth contribution is proving novel robustness properties of
contracting and equilibrium contracting vector fields. For strongly
contracting systems we prove incremental input-to-state stability and
provide novel input-state gain estimates. These results generalize
\cite[Theorem~A]{CAD-HH:72} and~\cite[Theorem~1]{AH-ES-DDV:15}, where vector
fields are required to have a special control-affine structure. We
additionally prove a novel result for contraction under perturbations and
are able to upper bound how far the unique equilibrium shifts.

Our sixth and final contribution is a general theorem about the
contractivity of the interconnection of contracting systems.  Motivated by
applications to large scale systems, we provide a sufficient condition for
contraction and establish optimal contraction rates for interconnected
systems. This theorem is the counterpart for contracting system of the
classic theorem about the interconnection of dissipative systems, e.g.,
see~\cite[Chapter~2]{MA-CM-AP:16}.  This treatment generalizes the results
in~\cite[Section~5]{TS:75}, \cite{GR-MDB-EDS:13},
and~\cite[Lemma~3.2]{JWSP-FB:12za}, where optimal rates of contraction are
not provided, vector fields are differentiable, and interconnections with
inputs are not studied.

\begin{arxiv}
As this document is an ArXiv technical report. As compared with its
corresponding journal article, this report additionally contains (i) additional proofs of properties of
sum-decompositions of weak pairings in Appendix~\ref{app:diagWSIP} and (ii) a
treatment of semi-contraction and subspace contraction in terms of
semi-norms and log semi-norms in Appendix~\ref{app:semi}. The treatment of semi-contraction
generalizes that in~\cite{SJ-PCV-FB:19q} by providing
derivative-free conditions for semi-contraction and subspace contraction in
terms of weak pairings and the one-sided Lipschitz condition on the vector
field.
\end{arxiv}

\textit{Paper organization:} Section~\ref{SIPs} reviews Lumer pairings and
Dini derivatives. Section~\ref{sec:WSIP} defines weak pairings and provides
explicit formulas for $\ell_p$ norms. Section~\ref{sec:contraction} proves
contraction equivalences. Section~\ref{sec:robustness} gives robustness
results for contracting vector fields. Section~\ref{sec:networks} studies
the interconnection of contracting systems. Section \ref{conclusions}
provides conclusions.

\textit{Notation}: For a time-varying dynamical system $\dot{x} = f(t,x), t \in \realnonnegative, x \in \real^n$, we denote the flow starting from initial condition $x(t_0) = x_0$ by $t \mapsto \phi(t,t_0,x_0)$. If $f$ is differentiable in $x$, we denote its Jacobian by $\jac{f}(t,x) := \frac{\partial f}{\partial x}(t,x)$. We let $I_n$ be the $n \times n$ identity matrix, $\vectorzeros[n] \in \real^n$ be the vector of all zeros, and let $\|\cdot\|_{p,R}$ be the $\ell_p$ norm weighted by an invertible matrix $R \in \real^{n \times n}$ in the sense that $\|x\|_{p,R} = \|Rx\|_p$. For symmetric $A,B \in \real^{n\times n}$, $A \preceq B$ means $B - A$ is positive semidefinite. The function $\map{\sign}{\real^n}{\{-1,0,1\}^n}$ maps each entry of the vector to its sign and zero to zero.

\section{A review of Lumer pairings and Dini derivatives}
\label{SIPs}

\subsection{Norms and Lumer pairings}
\begin{defn}[Lumer pairings \cite{GL:61,JRG:67}] \label{def:SIP}
  A \emph{Lumer pairing} on $\R^n$ is a map
  $[\cdot,\cdot]: \R^n \times \R^n \to \R$ satisfying:
  \begin{enumerate}
  \item\label{SIP1} (Additivity in first argument) $[x_1+x_2,y] = [x_1, y] + [x_2,y]$, for all $x_1,x_2,y \in \R^n$,
  \item\label{SIP2} (Homogeneity) $[\alpha x, y] = [x, \alpha y] = \alpha[x, y]$, for all $x,y \in \R^n, \alpha \in \R$,
  \item\label{SIP3} (Positive definiteness) $[x,x] > 0$, for all $x \neq \vectorzeros[n]$, and
  \item\label{SIP4} (Cauchy-Schwarz inequality)\\ $|[x,y]| \leq [x,x]^{1/2}[y,y]^{1/2}$, for all $x,y \in \R^n$.
  \end{enumerate}
\end{defn}
%SIPs are deeply connected with norms on $\R^n$.
\begin{lemma}[Norms and Lumer pairings \cite{GL:61}]
  \begin{enumerate}
  \item If $\R^n$ is equipped with a Lumer pairing, it is also a normed space with
    norm $\|x\| := [x,x]^{1/2}$, for all $x \in \real^n$.
  \item Conversely, if $\real^n$ is equipped with norm $\|\cdot\|$, then
    there exists an (not necessarily unique) Lumer pairing on $\real^n$ compatible
    with $\|\cdot\|$ in the sense that $\|x\| = [x,x]^{1/2}$, for all $x
    \in \real^n$.
  \end{enumerate}
\end{lemma}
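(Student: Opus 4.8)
The plan is to treat the two parts separately: part~(i) is a direct verification of the norm axioms from the pairing axioms, whereas part~(ii) requires \emph{constructing} a compatible pairing, for which I would invoke supporting functionals of the norm.

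For part~(i), I would set $\|x\| := [x,x]^{1/2}$ and check the three norm axioms. Nonnegativity and definiteness are immediate: positive definiteness gives $[x,x]>0$ for $x \neq \vectorzeros[n]$, while setting $\alpha = 0$ in the homogeneity axiom gives $[\vectorzeros[n],\vectorzeros[n]] = 0$, so $\|x\| = 0$ if and only if $x = \vectorzeros[n]$. Absolute homogeneity follows by applying the homogeneity axiom twice, $\|\alpha x\|^2 = [\alpha x, \alpha x] = \alpha^2 [x,x] = \alpha^2\|x\|^2$. The only nontrivial step is the triangle inequality, where I would use additivity in the first argument to write $\|x+y\|^2 = [x+y,x+y] = [x,x+y] + [y,x+y]$ and then bound each summand by the Cauchy--Schwarz axiom, $[x,x+y] \le \|x\|\,\|x+y\|$ and $[y,x+y] \le \|y\|\,\|x+y\|$. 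Adding these and dividing by $\|x+y\|$ (the case $x+y = \vectorzeros[n]$ being trivial) yields $\|x+y\| \le \|x\| + \|y\|$.

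For part~(ii), I would build a compatible pairing from supporting functionals. In finite dimensions the supporting hyperplane theorem (equivalently Hahn--Banach) guarantees that for every unit vector $u$ there is a linear functional $\phi_u$ on $\R^n$ with $\phi_u(u) = 1$ and $|\phi_u(x)| \le \|x\|$ for all $x \in \R^n$. I would fix one such functional at each point of the unit sphere, subject to the antipodal normalization $\phi_{-u} = -\phi_u$ (choose $\phi_u$ freely on one member of each pair $\{u,-u\}$ and propagate). I then define $[x,\vectorzeros[n]] := 0$ and, for $y \neq \vectorzeros[n]$ with $u := y/\|y\|$, set $[x,y] := \|y\|\,\phi_u(x)$.

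Verifying the four axioms is then largely mechanical: linearity of $\phi_u$ gives additivity and first-argument homogeneity; evaluating at $x = y$ gives $[y,y] = \|y\|^2$, which simultaneously delivers compatibility and positive definiteness; and the bound $|\phi_u(x)| \le \|x\|$ gives $|[x,y]| \le \|x\|\,\|y\| = [x,x]^{1/2}[y,y]^{1/2}$, i.e.\ Cauchy--Schwarz. The one genuinely delicate point, and the step I expect to require care, is second-argument homogeneity $[x,\alpha y] = \alpha[x,y]$. For $\alpha > 0$ the unit direction $u$ is unchanged and the factor $\|y\|$ scales correctly; for $\alpha < 0$ the direction flips to $-u$, and it is precisely the antipodal normalization $\phi_{-u} = -\phi_u$ that makes the two sign changes cancel. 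The case $\alpha = 0$ is covered by the convention $[x,\vectorzeros[n]] = 0$. This is why the consistent choice of supporting functionals, and not merely their existence, is the crux of the construction.
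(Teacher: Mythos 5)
Your proposal is correct. Note that the paper itself never proves this lemma: it is cited directly from Lumer's paper, and even in the analogous result the paper does prove (the compatibility theorem for weak pairings), the converse direction is dispatched with ``the proof is identical to that in \cite{GL:61}.'' Your part~(i) is essentially the same argument the paper uses there --- positive definiteness from the pairing axiom, homogeneity by applying the scaling axiom twice, and the triangle inequality by splitting $[x+y,x+y]$ via (sub)additivity and bounding each term with Cauchy--Schwarz --- the only difference being that for a Lumer pairing the first step is an equality rather than an inequality. Your part~(ii) supplies exactly the content the paper outsources to the reference: the duality-map construction, in which one picks, for each unit vector $u$, a norming functional $\phi_u$ with $\phi_u(u)=1$ and $|\phi_u(x)|\le\|x\|$ (Hahn--Banach, or the supporting hyperplane theorem in finite dimensions) and sets $[x,y]=\|y\|\,\phi_{y/\|y\|}(x)$. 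You correctly identify the one delicate point, homogeneity in the second argument for $\alpha<0$, and your antipodal normalization $\phi_{-u}=-\phi_u$ is admissible (if $\phi_u$ supports the norm at $u$ then $-\phi_u$ supports it at $-u$) and resolves it; this is also where the non-uniqueness asserted in the statement enters, since for a non-smooth norm the supporting functional at a given unit vector need not be unique. So your write-up is a faithful, self-contained reconstruction of the classical argument rather than a divergence from it.
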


\begin{defn}[\textcolor{black}{Logarithmic norm}~{\cite[Section 2.2.2]{MV:02}}]\label{def:lognorm}
  Let $\|\cdot\|$ be a norm on $\R^n$ and its corresponding induced
  norm in $\real^{n\times{n}}$.  The \emph{\textcolor{black}{logarithmic norm}} of $A \in \R^{n
    \times n}$ with respect to $\norm{\cdot}{}$ is
  \begin{equation}
    \mu(A):= \lim_{h \to 0^+} \frac{\|I_n + hA\| - 1}{h}.
  \end{equation}
\end{defn}
\textcolor{black}{The logarithmic norm is also referred to as matrix measure or, in what follows, log norm.} We refer to \cite{CAD-HH:72} for a list of properties of \textcolor{black}{log norms}.

%\begin{lemma}[Interchanging $\lim$ and $\sup$~{\cite[Proposition~3]{ZA-EDS:13}}]\label{lemma:suplim}
%	Let $\|\cdot\|$ be a norm on $\real^n$. Then for all $A \in \real^{n \times n}$
%	\begin{equation*}
%	\mu(A) = \sup_{\|x\| = 1} \lim_{h \to 0^+} \frac{\|(I_n + hA)x\| - 1}{h}.
%	\end{equation*}
%\end{lemma}

\begin{lemma}[Lumer's equality~{\cite[Lemma~12]{GL:61}}] \label{lemma:Lumer}
  Given a norm $\|\cdot\|$ on $\real^n$, a compatible Lumer pairing $[\cdot,\cdot]$,
  and a matrix $A \in \R^{n \times n}$,
  \begin{equation}
    \mu(A) = \sup_{\|x\| = 1} [Ax, x] = \sup_{x \neq \vectorzeros[n]} \frac{[Ax,x]}{\|x\|^2}.
  \end{equation}
\end{lemma}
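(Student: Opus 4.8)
The plan is to prove the two claimed equalities separately. The second equality, $\sup_{\|x\|=1}[Ax,x] = \sup_{x\neq \vectorzeros[n]}[Ax,x]/\|x\|^2$, I would dispatch immediately from homogeneity (Definition~\ref{def:SIP}\ref{SIP2}): writing any $x\neq\vectorzeros[n]$ as $\|x\|\,u$ with $\|u\|=1$ gives $[Ax,x]/\|x\|^2 = [Au,u]$, so the two suprema coincide. Denote their common value by $M$. It then remains to show $\mu(A)=M$, which I would do by squeezing the difference quotient $q(h):=(\|I_n+hA\|-1)/h$ between $M$ (from below, for every $h$) and a quantity tending to $M$ (from above).

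For the lower bound I would argue that $q(h)\ge M$ for all $h>0$. For any unit vector $x$, the Cauchy--Schwarz inequality (Definition~\ref{def:SIP}\ref{SIP4}) gives $[(I_n+hA)x,\, x] \le \|(I_n+hA)x\|\,\|x\| = \|(I_n+hA)x\|$, while additivity in the first argument (Definition~\ref{def:SIP}\ref{SIP1}) yields $[(I_n+hA)x,x] = [x,x] + h[Ax,x] = 1 + h[Ax,x]$. Hence $\|I_n+hA\| \ge \|(I_n+hA)x\| \ge 1 + h[Ax,x]$; taking the supremum over unit $x$ and dividing by $h$ shows $q(h)\ge M$, so $\liminf_{h\to 0^+}q(h)\ge M$.

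The upper bound is the delicate direction, and I expect it to be the main obstacle. The temptation is to pick maximizers $x_h$ attaining the induced norm, pass to a convergent subsequence $x_h\to x_\ast$, and replace the pairing at $(Ax_h, \hat{y}_h)$ by $[Ax_\ast,x_\ast]$; but a general Lumer pairing need not be continuous in its second argument, so this passage to the limit is not justified. My key idea is instead to route every estimate through additivity in the \emph{first} argument, where the pairing is linear, and never to perturb the second argument. Fix a unit vector $x$, set $z=(I_n+hA)x$ (nonzero for small $h$), and expand $\|z\|^2 = [z,z] = [x,z] + h[Ax,z]$ by first-slot additivity. I would bound $[x,z]\le\|z\|$ by Cauchy--Schwarz, and for the remaining term use the algebraic identity $Ax = Az - hA^2x$ together with first-slot additivity to write $[Ax,z] = [Az,z] - h[A^2x,z]$. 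Now $[Az,z]\le M\|z\|^2$ by the definition of $M$ (this is precisely where the first equality is used), and $[A^2x,z]\ge -\|A\|^2\|z\|$ by Cauchy--Schwarz. Combining yields $\|z\|^2 \le \|z\| + hM\|z\|^2 + h^2\|A\|^2\|z\|$, i.e. $\|z\|\le (1+h^2\|A\|^2)/(1-hM)$ once $h$ is small enough that $1-hM>0$.

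Finally, since this bound is uniform over all unit $x$, it bounds the induced norm itself: $\|I_n+hA\|\le (1+h^2\|A\|^2)/(1-hM)$, whence $q(h)\le (M+h\|A\|^2)/(1-hM)\to M$ as $h\to 0^+$, giving $\limsup_{h\to 0^+}q(h)\le M$. Together with the lower bound, this simultaneously shows that the limit defining $\mu(A)$ exists and equals $M$, completing the proof. The crux of the whole argument is the substitution $Ax=Az-hA^2x$, which converts $[Ax,z]$ into $[Az,z]$ and thereby lets the estimate bypass any continuity or second-argument linearity of the pairing.
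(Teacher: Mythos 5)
Your proof is correct, but it takes a genuinely different route from the paper's. The paper never proves Lemma~\ref{lemma:Lumer} directly (it is cited to Lumer); its own argument is the proof of the generalization, Theorem~\ref{theorem:lumer}, which subsumes the lemma because every Lumer pairing satisfies Deimling's inequality (Lemma~\ref{SupInfPairings}). In that proof, the direction $\mu(A) \geq \sup_{x\neq \vectorzeros[n]} [Ax,x]/\|x\|^2$ comes from Deimling's inequality --- unavoidable in the weak-pairing setting, where only subadditivity is available --- whereas you obtain it directly from exact first-slot additivity plus Cauchy--Schwarz, which is legitimate for Lumer pairings. For the hard direction, the paper first shows $\|(I_n - hA)v\| \geq (1 - h\sup_{\|u\|=1}\Omega(u))\|v\|$ with $\Omega(u) := \WSIP{Au}{u}$, then routes through the resolvent identity $(I_n - hA)^{-1} = I_n + hA + h^2A^2(I_n - hA)^{-1}$ to bound $\|I_n + hA\|$ by $\|(I_n - hA)^{-1}\| \leq \bigl(1 - h\sup_{\|u\|=1}\Omega(u)\bigr)^{-1}$; your substitution $Ax = Az - hA^2x$ with $z = (I_n + hA)x$ achieves the same decoupling of the pairing's second argument with no matrix inversion at all, and your two-sided squeeze re-derives, as a byproduct, the existence of the limit defining $\mu(A)$. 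What the paper's heavier machinery buys is generality: it survives when additivity is weakened to subadditivity and homogeneity holds only for $\alpha \geq 0$, which is exactly what is needed for the sign and max pairings; your expansion $[Ax,z] = [Az,z] - h[A^2x,z]$ uses exact additivity and homogeneity with a negative scalar, so it is confined to Lumer pairings --- precisely the scope of the lemma, so no harm done. One small point you should make explicit: the phrase ``once $h$ is small enough that $1 - hM > 0$'' presumes $M < \infty$, which follows instantly from Cauchy--Schwarz ($M \leq \|A\|$); the paper's proof states this finiteness step explicitly.
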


Recall that a norm $\|\cdot\|$ on $\R^n$ is \emph{differentiable} if, for
all $x, y \in \R^n \setminus \{\vect{0}_n\}$, the following limit exists:
$$\lim_{h \to 0} \frac{\|x + hy\| - \|x\|}{h}.$$ The $\ell_p$ norm is
differentiable for $p \in {]1, \infty[}$ and not differentiable for
$p\in\{1,\infty\}$.

%% All $\ell_p$ norms for are differentiable; the norms and
%% $\norm{\cdot}{\infty}$ are not.

\begin{lemma}[G\^ateaux formula for the Lumer pairing \cite{JRG:67}] \label{eq:GateauxFormula}
  Let $\|\cdot\|$ be a norm on $\R^n$. If $\|\cdot\|$ is differentiable,
  then there exists a unique compatible Lumer pairing given by the \emph{G\^ateaux
    formula}:
  \begin{equation} \label{GdFormula}
    [x,y] = \|y\|\lim_{h \to 0}\frac{\|y + hx\| - \|y\|}{h}, \; \; \; x,y
    \in \R^n \setminus \{\vect{0}_n\}.
  \end{equation}
\end{lemma}

\begin{lemma}[Lumer pairing and \textcolor{black}{log norm} for weighted $\ell_p$ norms {\cite[Example 13.1(a)]{KD:85}}]
  \label{GdSIP}
  For $p \in {]1, \infty[}$ and $R \in \mathbb{R}^{n \times n}$ invertible,
  let $\norm{\cdot}{p,R}$, $[\cdot,\cdot]_{p,R}$ and $\mu_{p,R}(\cdot)$
  denote the weighted $\ell_p$ norm, its Lumer pairing and its \textcolor{black}{log norm},
  respectively.  Then, for $x,y\in\real^n$ and $A\in\real^{n\times{n}}$,
  \begin{gather*}
    \|x\|_{p,R} = \|Rx\|_p , \quad
    [x,y]_{p,R} = \frac{(Ry \circ |Ry|^{p-2})^\top Rx}{\|y\|_{p,R}^{p-2}}, \\
    \mu_{p,R}(A) = \max_{\|x\|_{p,R} = 1} (Rx \circ |Rx|^{p-2})^\top RAx,
  \end{gather*}
  where $\circ$ is the entrywise product, $|\cdot|$ is the entrywise
  absolute value, and $(\cdot)^p$ is the entrywise power.
\end{lemma}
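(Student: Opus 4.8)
The first identity $\norm{x}{p,R} = \|Rx\|_p$ is simply the definition of the weighted norm recorded in the Notation paragraph, so nothing needs to be proven there. The plan for the second identity is to invoke the G\^ateaux formula~\eqref{GdFormula}, which applies because the $\ell_p$ norm, and hence its invertible-matrix reweighting, is differentiable for $p \in {]1,\infty[}$. Writing $N(y) := \norm{y}{p,R} = \|Ry\|_p = \big(\sum_{i=1}^n |(Ry)_i|^p\big)^{1/p}$, I would compute the directional derivative $\lim_{h\to 0} h^{-1}(N(y+hx) - N(y))$ by the chain rule, differentiating through the linear map $y \mapsto Ry =: w$ and then through the unweighted norm $w \mapsto \|w\|_p$.

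The key single-variable fact is that for $p>1$ the function $t \mapsto |t|^p$ is continuously differentiable with derivative $p\, t |t|^{p-2}$, where the product $t|t|^{p-2} = \sign(t)|t|^{p-1}$ is interpreted as $0$ at $t=0$. This is where the restriction $p>1$ is essential: the exponent $p-1$ is positive, so the product extends continuously through the origin even when $p<2$ and $|t|^{p-2}$ alone blows up. Differentiating $N$ then yields $\partial N/\partial w_i = \|w\|_p^{1-p}\, w_i|w_i|^{p-2}$, so the directional derivative in direction $x$ equals $\norm{y}{p,R}^{1-p}(Ry\circ|Ry|^{p-2})^\top Rx$. Multiplying by $\norm{y}{p,R}$ as prescribed by the G\^ateaux formula cancels one power and produces exactly the claimed expression $[x,y]_{p,R} = (Ry\circ|Ry|^{p-2})^\top Rx / \norm{y}{p,R}^{p-2}$. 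Since the norm is differentiable, the G\^ateaux lemma also guarantees that this is \emph{the} unique compatible Lumer pairing, not merely one of several.

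For the matrix-measure formula I would feed this pairing into Lumer's equality (Lemma~\ref{lemma:Lumer}). Setting the first argument to $Ax$ and the second to $x$, the pairing reads $[Ax,x]_{p,R} = (Rx\circ|Rx|^{p-2})^\top RAx / \norm{x}{p,R}^{p-2}$; restricting to $\norm{x}{p,R}=1$ kills the denominator, and the supremum over the compact unit sphere is attained by continuity of the integrand, so it may be written as a maximum. This gives $\mu_{p,R}(A) = \max_{\|x\|_{p,R}=1}(Rx\circ|Rx|^{p-2})^\top RAx$.

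I expect the only genuine obstacle to be the careful justification of the entrywise derivative at coordinates where $(Ry)_i = 0$ in the regime $p\in{]1,2[}$; once the continuous extension of $t\mapsto t|t|^{p-2}$ is established, everything else is a routine application of the chain rule together with the previously stated G\^ateaux and Lumer lemmas.
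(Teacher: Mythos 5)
Your proposal is correct, and in fact it supplies an argument the paper itself omits: the paper states Lemma~\ref{GdSIP} as a known result, citing \cite[Example 13.1(a)]{KL:85}, and gives no proof, so there is no in-paper argument to compare against. Your derivation is the natural one and uses exactly the ingredients the paper sets up for this purpose: differentiability of the weighted $\ell_p$ norm for $p \in {]1,\infty[}$, the G\^ateaux formula \eqref{GdFormula} (which also yields uniqueness of the compatible Lumer pairing), and Lumer's equality, Lemma~\ref{lemma:Lumer}. The two points that genuinely require care are both handled correctly: (i) the single-variable derivative $\frac{d}{dt}|t|^p = p\,\sign(t)|t|^{p-1}$ extends continuously through $t=0$ precisely because $p-1>0$, which is what legitimizes the entrywise expression $Ry \circ |Ry|^{p-2}$ (interpreted as $\sign(Ry)\circ|Ry|^{p-1}$) at coordinates where $(Ry)_i = 0$, even in the regime $p \in {]1,2[}$; and (ii) upgrading the $\sup$ in Lumer's equality to a $\max$ is justified since that same continuous extension makes $x \mapsto (Rx \circ |Rx|^{p-2})^\top RAx$ continuous on the compact unit sphere $\{\,x : \|x\|_{p,R}=1\,\}$. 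The only cosmetic gap is the degenerate argument $y = \vectorzeros[n]$, where the quotient formula is a $0/0$ expression and the pairing should be defined as $0$ by homogeneity; the lemma statement itself glosses over this, and it plays no role in the matrix-measure formula since that is evaluated on the unit sphere.
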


%\begin{proof}
%	Since the weighted $\ell_p$ norm is G\^ateaux differentiable, the unique SIP is given by the G\^ateaux formula. Further, the expression follows from the fact that
%	$$\frac{\partial}{\partial x_j} \|x\|_{p} = \frac{x_j|x_j|^{p-2}}{\|x\|_p^{p-1}}, \; \; \; \; \forall x \neq \vect{0}_n.$$
%	Finally, the expression for the matrix measure follows from Lumer's Lemma~\ref{lemma:Lumer}.
%\end{proof}
\begin{corollary} \label{corr:ell2}
  For $p = 2$ and $R = P^{1/2}$ where $P = P^\top \succ 0$,
  Lemma~\ref{GdSIP} implies
  \begin{gather*}
    \|x\|_{2,P^{1/2}}^2 = x^\top P x , \quad [x,y]_{2, P^{1/2}} = x^\top Py,
    \\ \mu_{2, P^{1/2}}(A) = \max_{\|x\|_{2,P^{1/2}} = 1} x^\top A^\top Px
    = \subscr{\lambda}{max}\Big( \frac{P A P^{-1} + A^\top }{2} \Big).
  \end{gather*}
\end{corollary}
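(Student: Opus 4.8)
The plan is to specialize the three formulas of Lemma~\ref{GdSIP} to $p=2$ and $R=P^{1/2}$, where $P^{1/2}$ denotes the symmetric positive definite square root of $P$, so that $(P^{1/2})^\top = P^{1/2}$ and $(P^{1/2})^\top P^{1/2} = P$. The first two identities are immediate substitutions. For the norm, $\|x\|_{2,P^{1/2}}^2 = \|P^{1/2}x\|_2^2 = (P^{1/2}x)^\top(P^{1/2}x) = x^\top P x$. For the pairing, the exponent $p-2$ vanishes, so the entrywise factor $|Ry|^{p-2}$ becomes the all-ones vector and the normalizing power $\|y\|_{p,R}^{p-2}$ equals $1$; hence $[x,y]_{2,P^{1/2}} = (P^{1/2}y)^\top(P^{1/2}x) = y^\top P x = x^\top P y$, the last equality using $P=P^\top$ and the fact that a scalar equals its transpose.

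The only substantive step is the matrix-measure formula. The same substitution gives $\mu_{2,P^{1/2}}(A) = \max_{\|x\|_{2,P^{1/2}}=1}(P^{1/2}x)^\top P^{1/2}Ax = \max_{\|x\|_{2,P^{1/2}}=1} x^\top P A x$, and since this scalar equals its transpose and $P=P^\top$ it also equals $\max_{\|x\|_{2,P^{1/2}}=1} x^\top A^\top P x$. To evaluate the constrained maximum I would symmetrize, writing $x^\top P A x = x^\top \tfrac{PA+A^\top P}{2} x$, and recast the problem as the generalized Rayleigh quotient $\max_{x\neq\vectorzeros[n]} \frac{x^\top \frac{PA+A^\top P}{2}x}{x^\top P x}$, using that $\|x\|_{2,P^{1/2}}^2 = x^\top P x$. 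The change of variables $x = P^{-1/2}z$ turns the denominator into $z^\top z$ and the numerator into $z^\top \tfrac{M+M^\top}{2}z$ with $M := P^{1/2}AP^{-1/2}$; by the Rayleigh--Ritz characterization for the symmetric matrix $\tfrac{M+M^\top}{2}$, the maximum equals $\subscr{\lambda}{max}\big(\tfrac{M+M^\top}{2}\big)$. Finally, conjugating by $P^{1/2}$ sends $\tfrac{M+M^\top}{2}$ to $\tfrac{PAP^{-1}+A^\top}{2}$, so these matrices are similar and have the same eigenvalues, in particular the same largest one, which gives the claimed expression.

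The main obstacle is the matrix-measure identity, and within it the only nonmechanical ideas are recognizing the constrained maximization as a generalized Rayleigh quotient and choosing the symmetrizing substitution $x = P^{-1/2}z$ so that Rayleigh--Ritz applies to a genuinely symmetric matrix. Once that is set up, replacing $M+M^\top$ by $PAP^{-1}+A^\top$ via the similarity transformation is a short verification, and the norm and pairing identities require no more than bookkeeping.
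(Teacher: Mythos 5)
Your proposal is correct and follows exactly the route the paper intends: the paper states Corollary~\ref{corr:ell2} without proof as a direct specialization of Lemma~\ref{GdSIP}, and your substitutions for the norm and pairing, together with the symmetrization $x^\top P A x = x^\top \tfrac{PA+A^\top P}{2}x$, the change of variables $x = P^{-1/2}z$, Rayleigh--Ritz, and the similarity $P^{1/2}\tfrac{M+M^\top}{2}P^{-1/2} = \tfrac{PAP^{-1}+A^\top}{2}$, correctly fill in the only nontrivial step (the eigenvalue formula) that the paper leaves implicit.
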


\subsection{Dini derivatives}
\begin{defn}[Upper right Dini derivative]
  The \emph{upper right Dini derivative} of a function
  $\map{\varphi}{\realnonnegative}{\real}$ is
  \begin{equation}
    D^+\varphi(t) := \limsup_{h \to 0^+} \frac{\varphi(t+h) -
      \varphi(t)}{h}.
  \end{equation}
\end{defn}
\begin{lemma}[Danskin's lemma~\cite{JMD:66}]\label{lemma:Danskin}
  Given differentiable functions $\map{f_1,\dots,f_m}{{]a,b[}}{\real}$, if
  $f(t) = \max_i f_i(t)$, then
  \begin{equation}
    D^+f(t) = \max \Bigsetdef{\frac{d}{dt}f_i(t)}{f_i(t) = f(t)}.
  \end{equation}
\end{lemma}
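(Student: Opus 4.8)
The plan is to fix $t \in {]a,b[}$, introduce the set of \emph{active} indices $I(t) = \setdef{i}{f_i(t)=f(t)}$ attaining the maximum at $t$, and then prove the two inequalities $D^+f(t) \ge \max_{i\in I(t)} f_i'(t)$ and $D^+f(t) \le \max_{i\in I(t)} f_i'(t)$ separately. The lower bound is essentially free and the upper bound carries all the work, so I would set up the active index set first because it organizes both halves.

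For the lower bound, I would observe that for every $i \in I(t)$ and every $h>0$ we have $f(t+h) \ge f_i(t+h)$ while $f(t) = f_i(t)$, so the difference quotients satisfy $\frac{f(t+h)-f(t)}{h} \ge \frac{f_i(t+h)-f_i(t)}{h}$. Taking $\limsup_{h\to 0^+}$ and using differentiability of $f_i$ gives $D^+f(t) \ge f_i'(t)$ for each $i \in I(t)$, hence $D^+f(t) \ge \max_{i\in I(t)} f_i'(t)$.

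The upper bound is where the real work lies, and its key step is a continuity argument localizing the maximum to active indices for all sufficiently small $h$. Since there are finitely many indices, I can pick $\eps>0$ with $f_j(t) \le f(t)-2\eps$ for every inactive index $j \notin I(t)$, and then use continuity of all $f_i$ to find $\delta>0$ with $|f_i(t+h)-f_i(t)| < \eps/2$ whenever $|h|<\delta$. A short estimate then shows that for $0<h<\delta$ every inactive $f_j(t+h)$ stays strictly below $\max_{i\in I(t)} f_i(t+h)$, so that $f(t+h) = \max_{i\in I(t)} f_i(t+h)$. Consequently, choosing for each such $h$ an active maximizer $i(h)\in I(t)$ yields $\frac{f(t+h)-f(t)}{h} = \frac{f_{i(h)}(t+h)-f_{i(h)}(t)}{h} \le \max_{i\in I(t)} \frac{f_i(t+h)-f_i(t)}{h}$.

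Finally, because $I(t)$ is finite and each difference quotient converges to $f_i'(t)$ by differentiability, the maximum of these finitely many convergent quantities converges to $\max_{i\in I(t)} f_i'(t)$; in particular its $\limsup$ equals this maximum. Passing to $\limsup_{h\to 0^+}$ in the displayed inequality therefore gives $D^+f(t) \le \max_{i\in I(t)} f_i'(t)$, which together with the lower bound establishes the claimed equality. I expect the only subtle point to be the continuity argument pinning the maximizer to active indices near $t$; the interchange of $\limsup$ and $\max$ is then painless precisely because $I(t)$ is finite and each quotient genuinely converges rather than merely possessing a $\limsup$.
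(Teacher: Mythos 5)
Your proof is correct, but note that the paper itself does not prove this lemma at all: it is stated as a known result with a citation to Danskin's 1966 paper, so there is no in-paper argument to compare against. Your argument is the standard, self-contained proof for the finite-max case, and it is complete: the lower bound follows from $f \ge f_i$ together with equality at $t$ for each active index; the upper bound rests on the localization step, where continuity and the finiteness of the index set force the maximizer to lie in the active set $I(t)$ for all sufficiently small $h>0$, since inactive indices are uniformly bounded away from $f(t)$. After that localization, exchanging $\limsup_{h \to 0^+}$ with the $\max$ over the finite set $I(t)$ is legitimate precisely for the reason you give: each active difference quotient genuinely converges to $f_i'(t)$, and the maximum of finitely many convergent functions converges to the maximum of their limits. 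The one edge case you leave implicit---when every index is active, so no $\epsilon$-gap needs to be chosen---is vacuous and harmless. In short, you have supplied a correct proof of a result the paper imports from the literature without proof.
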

The following two lemmas are related to known results. We report them here
for completeness sake.
\begin{lemma}[Dini derivative of absolute value function]\label{lemma:absvalue}
  Let $x: {]a,b[} \to \R$ be differentiable. Then 
  $$D^+|x(t)| = \dot{x}(t)\sign(x(t)) + |\dot{x}(t)|\chi_{\{0\}}(x(t)),$$
  where $\chi_A(x)$ is the indicator function which is 1 when $x \in A$ and
  zero otherwise.
\end{lemma}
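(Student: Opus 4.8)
The plan is to prove the identity pointwise by splitting into cases according to the sign of $x(t)$, using the differentiability of $x$ to control the difference quotient $\frac{|x(t+h)|-|x(t)|}{h}$ as $h\to 0^+$.

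First I would treat the case $x(t)\neq 0$. Since $x$ is differentiable, hence continuous, it retains its sign on a neighborhood of $t$; concretely, for all sufficiently small $h>0$ we have $\sign(x(t+h))=\sign(x(t))$, so that $|x(t+h)|=\sign(x(t))\,x(t+h)$ and $|x(t)|=\sign(x(t))\,x(t)$. Substituting into the difference quotient gives $\frac{|x(t+h)|-|x(t)|}{h}=\sign(x(t))\,\frac{x(t+h)-x(t)}{h}$, whose limit as $h\to 0^+$ is $\sign(x(t))\dot x(t)$. Since $\chi_{\{0\}}(x(t))=0$ in this case, this matches the claimed right-hand side.

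Next I would handle the case $x(t)=0$, which is the only delicate one. Here $|x(t)|=0$, so the difference quotient reduces to $\frac{|x(t+h)|}{h}$. Differentiability at $t$ yields the first-order expansion $x(t+h)=\dot x(t)\,h+o(h)$, so that for $h>0$ we have $\frac{|x(t+h)|}{h}=\Big|\dot x(t)+\tfrac{o(h)}{h}\Big|\to|\dot x(t)|$. Thus the $\limsup$ is in fact a genuine limit equal to $|\dot x(t)|$. Since $\sign(x(t))=\sign(0)=0$ and $\chi_{\{0\}}(x(t))=1$, the right-hand side evaluates to $|\dot x(t)|$ as well.

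Combining the cases yields the stated formula. The only real subtlety is the case $x(t)=0$: one must invoke differentiability (continuity alone is insufficient) to pin down the limit of $|x(t+h)|/h$, and note that, because we approach from the right, the factor $h$ in the denominator is positive; this is precisely where the term $|\dot x(t)|\chi_{\{0\}}(x(t))$ arises.
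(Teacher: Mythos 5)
Your proof is correct, but it takes a genuinely different route from the paper. The paper's proof is a one-line application of Danskin's lemma (its Lemma~5): writing $|x(t)|=\max\{x(t),-x(t)\}$, Danskin gives $D^+|x(t)|$ as the maximum of $\frac{d}{dt}x(t)=\dot x(t)$ and $\frac{d}{dt}(-x(t))=-\dot x(t)$ over the active indices, which yields $\dot x(t)$ when $x(t)>0$, $-\dot x(t)$ when $x(t)<0$, and $\max\{\dot x(t),-\dot x(t)\}=|\dot x(t)|$ when $x(t)=0$ --- exactly the three cases of the stated formula. You instead argue from first principles with difference quotients: sign preservation by continuity when $x(t)\neq 0$, and the first-order expansion $x(t+h)=\dot x(t)h+o(h)$ when $x(t)=0$. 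The paper's approach is shorter and reuses machinery it needs anyway (Danskin's lemma also underlies the $\ell_\infty$ curve norm derivative formula in Section~III), whereas your argument is self-contained, avoids invoking any lemma, makes explicit exactly where differentiability (as opposed to mere continuity) is indispensable, and additionally shows that the upper Dini derivative is in fact a genuine one-sided limit in every case --- a slightly stronger conclusion than the statement requires.
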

\begin{proof}
  Since $|x(t)| = \max\{x(t),-x(t)\}$, Lemma~\ref{lemma:Danskin} implies
  $D^+|x(t)| = \dot{x}(t)$ if $x(t) > 0, -\dot{x}(t)$ if $x(t) < 0$, and
  $|\dot{x}(t)|$ if $x(t) = 0$.
%%  Thus, $D^+|x(t)| = \dot{x}(t)\sign(x(t)) + |\dot{x}(t)|\chi_{\{0\}}(x(t))$.
\end{proof}

\begin{lemma}[Nonsmooth \GB inequality]
  \label{lemma:nonsmooth-GB}
  Let $\map{\varphi,r}{[a,b]}{\realnonnegative}$ and
  $\map{m}{[a,b]}{\real}$ be continuous. If $D^+\varphi(t) \leq
  m(t)\varphi(t) + r(t)$ for almost every $t \in {]a,b[}$, then, for every
  $t \in [a,b]$ and for $M(t) = \int_a^t m(\tau)d\tau$,
  \begin{equation}
    \varphi(t) \leq e^{M(t)}\Big(\varphi(a) + \int_a^t r(\tau)e^{-M(\tau)}d\tau\Big).
  \end{equation}
\end{lemma}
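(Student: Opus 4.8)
The plan is to absorb the coefficient $m$ into an integrating factor and thereby reduce the statement to the monotonicity of a single scalar function. Define
\[
  \psi(t) := e^{-M(t)}\varphi(t) - \int_a^t r(\tau)e^{-M(\tau)}\,d\tau ,
\]
so that $\psi(a) = \varphi(a)$ and, since $e^{M(t)} > 0$, the asserted bound is \emph{exactly} the inequality $\psi(t) \le \psi(a)$ rewritten after multiplication by $e^{M(t)}$. Hence it suffices to prove that $\psi$ is non-increasing on $[a,b]$.

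Next I would compute $D^+\psi$. The factor $g(t) := e^{-M(t)}$ is $C^1$, strictly positive, with $g'(t) = -m(t)e^{-M(t)}$, and the integral term is $C^1$ with derivative $r(t)e^{-M(t)}$ (both by continuity of $m,r$ and the fundamental theorem of calculus). For a strictly positive $C^1$ factor one has the Dini product rule $D^+(g\varphi)(t) = g(t)\,D^+\varphi(t) + g'(t)\varphi(t)$: splitting the difference quotient as $g(t+h)\frac{\varphi(t+h)-\varphi(t)}{h} + \frac{g(t+h)-g(t)}{h}\varphi(t)$, the second summand converges to $g'(t)\varphi(t)$, while in the first the prefactor $g(t+h)\to g(t)>0$ may be pulled through the $\limsup$. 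Subtracting the (genuine) derivative of the integral term and invoking the hypothesis $D^+\varphi(t) \le m(t)\varphi(t)+r(t)$ gives, for almost every $t$,
\[
  D^+\psi(t) \le e^{-M(t)}\big(m(t)\varphi(t)+r(t)\big) - m(t)e^{-M(t)}\varphi(t) - r(t)e^{-M(t)} = 0 .
\]

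It remains to deduce $\psi(t)\le\psi(a)$ from $D^+\psi\le 0$, and this transfer from a pointwise Dini bound to a global bound is where I expect the real difficulty. For the clean case in which the Dini inequality holds at \emph{every} $t$, the standard device works: fix $\epsilon>0$, set $g_\epsilon(t) := \psi(t)-\epsilon(t-a)$ so that $D^+g_\epsilon(t)\le-\epsilon<0$, and argue by contradiction at the last exit time $t^\ast := \sup\{t : g_\epsilon \le \psi(a) \text{ on } [a,t]\}$; continuity forces $g_\epsilon(t^\ast)=\psi(a)$ with $g_\epsilon>\psi(a)$ immediately to the right, which yields $D^+g_\epsilon(t^\ast)\ge 0$, a contradiction. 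Letting $\epsilon\downarrow 0$ gives $\psi\le\psi(a)$. The subtlety is that this argument evaluates $D^+g_\epsilon$ at the specific point $t^\ast$, whereas the hypothesis only controls $D^+\varphi$ almost everywhere; continuity alone is genuinely insufficient here (a Cantor-function type $\varphi$ has $D^+\varphi=0$ a.e.\ yet increases). The remedy I would use is that the $\varphi$ to which the lemma is applied is locally absolutely continuous, so that $D^+\varphi$ coincides with the classical derivative $\dot\varphi$ almost everywhere; then $\psi$ is absolutely continuous and $\psi(t)-\psi(a)=\int_a^t D^+\psi(\tau)\,d\tau \le 0$ follows directly from the fundamental theorem of calculus, bypassing the pointwise last-exit argument entirely.
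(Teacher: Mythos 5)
Your proof follows the same basic route as the paper's: both introduce the integrating factor $e^{-M(t)}$ and reduce the claim to a monotonicity/growth statement for a transformed function (the paper keeps $\psi(t)=\varphi(t)e^{-M(t)}$ and bounds $D^+\psi(t)\le r(t)e^{-M(t)}$ a.e.; you additionally absorb the integral term into $\psi$, which is cosmetic). The genuine difference is in the transfer step from the almost-everywhere Dini bound to the global inequality. The paper outsources this step to a citation (Appendix A1, Proposition 2 of its reference [TL:10]), whereas you prove it yourself and, in doing so, put your finger on a real subtlety that the paper's statement glosses over: for a merely continuous $\varphi$, a Dini-derivative bound holding only almost everywhere does not control growth. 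Your Cantor-function remark is in fact sharper than you present it: it is not merely an obstruction to the last-exit argument, it is a counterexample to the lemma as literally stated. Take $\varphi$ equal to the Cantor function on $[0,1]$, $m\equiv 0$, $r\equiv 0$; then $\varphi$ is continuous and nonnegative, $D^+\varphi(t)=0$ for a.e.\ $t$, yet $\varphi(1)=1>0=\varphi(0)$, contradicting the asserted bound.

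Consequently some strengthening is unavoidable: either the differential inequality must hold everywhere (or off a countable set), or $\varphi$ must carry extra regularity. Your choice, local absolute continuity of $\varphi$, under which $\psi$ is absolutely continuous and the fundamental theorem of calculus closes the proof, is legitimate and is satisfied in every application of the lemma in the paper, where $\varphi$ is the norm of a difference of continuously differentiable solutions and hence locally Lipschitz. The only criticism is that, as written, you prove a (correct) variant of the lemma with an added hypothesis rather than the lemma as stated; but since the stated lemma is false without such a hypothesis, this is a defect of the statement, and implicitly of the paper's own proof, whose cited proposition cannot cover the ``continuous plus a.e.''\ case, rather than a defect of your argument. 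Your product-rule computation for $D^+(g\varphi)$ with a positive $C^1$ factor $g$ is correct as justified.
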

\begin{proof}
  Let $\psi(t) = \varphi(t)e^{-M(t)} \geq 0$ for all $t \in [a,b]$. Then
  \begin{align*}
    D^+\psi(t) &\leq (D^+\varphi(t) - m(t)\varphi(t))e^{-M(t)} \leq r(t)e^{-M(t)}.
  \end{align*}
  for almost every $t \in {]a,b[}$. Note that $r(t)e^{-M(t)}$ is continuous
  and \textcolor{black}{satisfies $r(t)e^{-M(t)} \geq 0$ for all $t \in [a,b]$}.
  %uniformly nonnegative. 
  Then by~\cite[Appendix~A1,
    Proposition~2]{TL:10}, for every $t \in [a,b]$, we have
  \begin{align*}
    \psi(t) &\leq \psi(a) + \int_a^t r(\tau)e^{-M(\tau)}d\tau, 
    %% \\   
    %% &\implies \varphi(t) \leq e^{M(t)}\Big(\varphi(a) + \int_a^t r(\tau)e^{-M(\tau)}d\tau\Big).
  \end{align*}
  which, in turn, implies the claim.
\end{proof}
\begin{lemma}[Dini comparison lemma~{\cite[Lemma 3.4]{HKK:02}}]\label{lemma:dinicomparison}
  Consider the initial value problem $\dot{\zeta} = f(t,\zeta)$,
  $\zeta(t_0) = \zeta_0$, where $\map{f}{\realnonnegative \times
    \real}{\real}$ is continuous in $t$ and locally Lipschitz in $\zeta$,
  for all $t \geq 0$ and $\zeta \in \real$. Let ${[t_0, T[}$ be the maximal
      interval of existence for $\zeta(t)$ and let
      $\map{v}{\realnonnegative}{\real}$ be continuous and satisfy
      $$D^+v(t) \leq f(t,v(t)), \quad v(t_0) \leq \zeta_0.$$
      Then $v(t) \leq \zeta(t)$ for all $t \in {[t_0,T[}$.
\end{lemma}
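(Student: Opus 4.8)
The plan is to prove the lemma by a \emph{perturbation-and-first-crossing} argument: I reduce the non-strict Dini inequality to a family of strictly perturbed differential equations, establish a strict comparison for each member of the family, and then recover the non-strict conclusion by passing to the limit using continuous dependence of ODE solutions on parameters.

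First I would introduce, for each $\epsilon > 0$, the perturbed initial value problem
\begin{equation*}
  \dot{\zeta}_\epsilon = f(t,\zeta_\epsilon) + \epsilon, \qquad \zeta_\epsilon(t_0) = \zeta_0,
\end{equation*}
whose right-hand side is still continuous in $t$ and locally Lipschitz in the state, so that a unique maximal solution $\zeta_\epsilon$ exists. The essential feature is the strict inequality $\dot{\zeta}_\epsilon(t) = f(t,\zeta_\epsilon(t)) + \epsilon > f(t,\zeta_\epsilon(t))$, which I will exploit to force $v$ to remain strictly below $\zeta_\epsilon$.

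The core step is the strict comparison claim: on any compact subinterval $[t_0,t^*]$ of the common interval of existence, $v(t) < \zeta_\epsilon(t)$ for all $t \in {]t_0, t^*]}$. I would prove this by contradiction. Suppose the set $S = \setdef{t \in {]t_0,t^*]}}{v(t) \geq \zeta_\epsilon(t)}$ is nonempty and set $t_1 = \inf S$. Using $v(t_0) \leq \zeta_0 = \zeta_\epsilon(t_0)$, continuity of $v - \zeta_\epsilon$, and minimality of $t_1$, one shows $v(t_1) = \zeta_\epsilon(t_1)$ while $v(t) < \zeta_\epsilon(t)$ for $t \in {]t_0,t_1[}$, and that there is a sequence $h_n \to 0^+$ with $v(t_1 + h_n) \geq \zeta_\epsilon(t_1 + h_n)$. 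Evaluating the difference quotients along this sequence and using differentiability of $\zeta_\epsilon$ gives $D^+v(t_1) \geq \dot{\zeta}_\epsilon(t_1)$. On the other hand, the hypothesis $D^+v(t_1) \leq f(t_1, v(t_1))$ together with $v(t_1) = \zeta_\epsilon(t_1)$ yields $D^+ v(t_1) \leq f(t_1,\zeta_\epsilon(t_1)) = \dot{\zeta}_\epsilon(t_1) - \epsilon < \dot{\zeta}_\epsilon(t_1)$, a contradiction. Hence $S = \emptyset$ and the strict comparison holds. I would emphasize that this is exactly where the upper \emph{right} Dini derivative (rather than a left or lower derivative) is needed: the first violation at $t_1$ is detected through points immediately to the right of $t_1$.

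Finally I would let $\epsilon \to 0^+$. Fixing any $t_1 \in {[t_0, T[}$, continuous dependence of solutions on the parameter $\epsilon$ (guaranteed by local Lipschitzness of $f$) ensures that for all sufficiently small $\epsilon$ the solution $\zeta_\epsilon$ exists on the whole of $[t_0, t_1]$ and converges to $\zeta$ uniformly on $[t_0,t_1]$ as $\epsilon \to 0^+$. Passing to the limit in $v(t) < \zeta_\epsilon(t)$ then gives $v(t) \leq \zeta(t)$ for every $t \in [t_0, t_1]$, and since $t_1 < T$ is arbitrary the conclusion follows on all of ${[t_0,T[}$. I expect this last step to be the main obstacle: one must justify that the perturbed solutions $\zeta_\epsilon$ remain defined on the entire compact interval $[t_0,t_1]$ for small $\epsilon$ and converge uniformly to $\zeta$. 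This is a standard but nontrivial consequence of the theory of continuous dependence on parameters, and it is precisely the reason local Lipschitzness of $f$ is assumed, whereas mere continuity suffices for the comparison function $v$.
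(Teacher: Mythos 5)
Your high-level strategy---perturb the right-hand side by $\epsilon>0$, establish a comparison with $\zeta_\epsilon$, and let $\epsilon\to0^+$ via continuous dependence---is exactly the strategy of the proof of \cite[Lemma~3.4]{HKK:02}, which is all the paper itself offers: the lemma is stated there by citation, with no proof given. Your final limiting step is sound, and you correctly attribute to local Lipschitzness of $f$ both the existence of $\zeta_\epsilon$ on compact subintervals for small $\epsilon$ and the uniform convergence $\zeta_\epsilon\to\zeta$. The gap is in the middle step, the strict comparison $v<\zeta_\epsilon$. You set $S=\setdef{t\in{]t_0,t^*]}}{v(t)\ge\zeta_\epsilon(t)}$, $t_1=\inf S$, and assert that minimality of $t_1$ produces a sequence $h_n\to0^+$ with $v(t_1+h_n)\ge\zeta_\epsilon(t_1+h_n)$. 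That assertion does not follow: if $t_1\in S$, i.e.\ $v(t_1)=\zeta_\epsilon(t_1)$, then $t_1$ may a priori be an \emph{isolated} point of $S$---the configuration in which $v$ rises to touch $\zeta_\epsilon$ at $t_1$ from below and immediately falls strictly below it again. Nothing you have established excludes this: the hypothesis $D^+v(t_1)\le f(t_1,v(t_1))=\dot{\zeta}_\epsilon(t_1)-\epsilon$ constrains $v$ only to the \emph{right} of $t_1$ and is perfectly consistent with a touching point, whereas the obstruction to touching lives to the \emph{left} of $t_1$, about which the upper right Dini derivative at $t_1$ says nothing. Ruling it out requires a separate argument---for instance, local Lipschitzness of $f$ gives $D^+(v-\zeta_\epsilon)(t)\le L\,|v(t)-\zeta_\epsilon(t)|-\epsilon<0$ on a left-neighborhood of $t_1$, and a continuous function with negative upper right Dini derivative on an interval is nonincreasing, so $v-\zeta_\epsilon$ cannot climb up to $0$ at $t_1$---and you supply none.

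The cleaner repair, and essentially the argument in the cited source, is to prove only the \emph{non-strict} bound $v\le\zeta_\epsilon$ (which is all the limit $\epsilon\to0^+$ requires) and to choose the comparison point from the other side. If $v(\tau)>\zeta_\epsilon(\tau)$ for some $\tau$, set $a=\sup\setdef{t\in[t_0,\tau]}{v(t)\le\zeta_\epsilon(t)}$; this set contains $t_0$, so $a$ is well defined, and by continuity $v(a)=\zeta_\epsilon(a)$ while $v(t)>\zeta_\epsilon(t)$ for \emph{every} $t\in{]a,\tau]}$. The right difference quotients at $a$ then satisfy $\frac{v(t)-v(a)}{t-a}>\frac{\zeta_\epsilon(t)-\zeta_\epsilon(a)}{t-a}$ for all $t\in{]a,\tau]}$, hence $D^+v(a)\ge\dot{\zeta}_\epsilon(a)=f(a,v(a))+\epsilon>f(a,v(a))$, contradicting the hypothesis. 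Taking the \emph{last} time $v\le\zeta_\epsilon$ before a violation, rather than the \emph{first} time $v\ge\zeta_\epsilon$, guarantees that an entire right-neighborhood of the comparison point lies in the bad set, which is exactly what an upper-right-Dini-derivative argument needs; your first-crossing choice guarantees this only when the infimum is not attained, and that is precisely the case your proof silently assumes.
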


\begin{lemma}[Coppel's differential inequality \cite{WAC:1965}]
  \label{Coppel} Given a continuous map $(t,x) \mapsto A(t,x) \in \R^{n \times n}$,
  any solution $x(\cdot)$ of $\dot{x} = A(t,x)x$ satisfies
  \begin{equation}
  D^{+}\|x(t)\| \leq \mu(A(t,x(t)))\|x(t)\|.
  \end{equation}
\end{lemma}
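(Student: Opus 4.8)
The plan is to estimate the difference quotient defining $D^+\|x(t)\|$ directly, exploiting the linear structure $\dot{x} = A(t,x)x$ to factor out the matrix $I_n + hA$ and then recognize the matrix measure in the limit. The whole argument is a short computation; there is no induction or case analysis, so the work is in setting up the expansion correctly and controlling the remainder.

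First I would fix $t$ and expand the solution to first order in $h$. Because $x(\cdot)$ solves an ODE with continuous right-hand side it is differentiable, so for small $h>0$ we have $x(t+h) = x(t) + h\dot{x}(t) + o(h)$. Substituting $\dot{x}(t) = A(t,x(t))x(t)$ turns this into
\begin{equation*}
  x(t+h) = \big(I_n + hA(t,x(t))\big)x(t) + o(h).
\end{equation*}
Next I would apply the triangle inequality to peel off the remainder and then use submultiplicativity of the induced matrix norm, giving
\begin{equation*}
  \|x(t+h)\| \leq \big\|\big(I_n + hA(t,x(t))\big)x(t)\big\| + \|o(h)\| \leq \big\|I_n + hA(t,x(t))\big\|\,\|x(t)\| + \|o(h)\|.
\end{equation*}
Subtracting $\|x(t)\|$, dividing by $h>0$, and taking $\limsup_{h\to 0^+}$, the first term tends to $\mu(A(t,x(t)))\|x(t)\|$ by the definition of the matrix measure, while $\|o(h)\|/h \to 0$; together these yield the claimed inequality.

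The main subtlety to handle is the remainder: I must confirm that the Taylor remainder is genuinely $o(h)$ so that $\|o(h)\|/h \to 0$, which follows from differentiability of $x$ at the single point $t$, and that splitting it off via the triangle inequality is legitimate before invoking submultiplicativity of the operator norm. A secondary point is that the quantity $(\|I_n + hA\|-1)/h$ has an honest limit (the matrix measure) rather than only a $\limsup$, so the $\limsup$ of the product separates cleanly into $\mu(A(t,x(t)))$ times the constant $\|x(t)\|$; I would note this explicitly to justify passing to the limit term by term.
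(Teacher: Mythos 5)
Your proof is correct. The paper does not actually prove this lemma --- it imports it by citation from Coppel's 1965 book --- and your argument (first-order expansion of the solution, triangle inequality plus submultiplicativity of the induced norm, then passing to the limit using the fact that $\lim_{h\to 0^+}(\|I_n+hA\|-1)/h$ exists) is precisely the standard classical proof of this result, with the two genuine subtleties (the $o(h)$ remainder and the clean separation of the $\limsup$) correctly identified and handled.
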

%\begin{proof}
%	If $x(t) = 0$ for all time, the result is trivial, so assume $x(t) \neq 0$. Then by the Deimling curve norm derivative formula, Lemma \ref{lemma:DeimlingDerivative},
%	$$\|x(t)\|D^+\|x(t)\| = (A(t,x)x(t), x(t))_{+}.$$ Dividing by $\|x(t)\| \neq 0$ yields
%	$$D^+\|x(t)\| = \frac{(A(t,x)x(t), x(t))_{+}}{\|x(t)\|^2}\|x(t)\|.$$
%	Moreover, by an extension of Lumer's lemma to the Deimling  pairing, Lemma \ref{lemma:lumerdeimling}, $\frac{(A(t,x)x(t), x(t))_{+}}{\|x(t)\|^2} \leq \mu(A(t,x))$. This proves the inequality.
%\end{proof}
We conclude with a small useful result.
\begin{lemma}
  \label{lemma:uniqueness}
  Consider the control system $\dot{x} = f(t,x,u(t))$ with
  $\map{f}{\realnonnegative \times \real^n \times \real^k}{\real^n}$
  continuous in $(t,x,u)$. Let $\|\cdot\|_{\mcX}$ be a norm on $\real^n$ and
  $\|\cdot\|_\mcU$ be a norm on $\real^k$. If there exists $\ell\geq0$ such
  that, for each $\textcolor{black}{t \in \realnonnegative,x \in \real^n, u,v \in \real^k}$,
  $$\|f(t,x,u) - f(t,x,v)\|_{\mcX} \leq \ell\|u - v\|_{\mcU},$$ then any two
  continuously differentiable solutions $x(\cdot), y(\cdot)$ to the
  control system corresponding to continuous inputs
  $\map{u_x,u_y}{\realnonnegative}{\real^k}$ and with $x(t) = y(t)$ for some $t
  \geq 0$ satisfy
  \begin{equation}
    D^+\|x(t) - y(t)\|_{\mcX} \leq \ell\|u_x(t) - u_y(t)\|_{\mcU}.
  \end{equation}
\end{lemma}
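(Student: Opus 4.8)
The plan is to work with the difference curve $z(\cdot) := x(\cdot) - y(\cdot)$ and to exploit the hypothesis that $z$ vanishes at the time $t$ in question. Since $x(t) = y(t)$, we have $\|z(t)\|_{\mcX} = 0$, so by the definition of the upper right Dini derivative,
$$D^+\|z(t)\|_{\mcX} = \limsup_{h\to 0^+} \frac{\|z(t+h)\|_{\mcX} - \|z(t)\|_{\mcX}}{h} = \limsup_{h\to 0^+} \frac{\|z(t+h)\|_{\mcX}}{h}.$$
This already isolates the essential simplification: with $z(t)=\vectorzeros[n]$, the Dini derivative reduces to a pure scaling limit of the norm of $z(t+h)$, with no contribution from a nonsmooth kink of the norm away from the origin.

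The key step is to evaluate this limit using the continuous differentiability of $x$ and $y$. Writing the first-order expansion $z(t+h) = h\,\dot{z}(t) + r(h)$ with $r(h)/h \to \vectorzeros[n]$ as $h\to0$, I would substitute and use positive homogeneity of the norm (valid since $h > 0$) together with continuity of $\|\cdot\|_{\mcX}$ to obtain
$$\frac{\|z(t+h)\|_{\mcX}}{h} = \Bignorm{\dot{z}(t) + \frac{r(h)}{h}}{\mcX} \longrightarrow \|\dot{z}(t)\|_{\mcX}.$$
Hence the limit in fact exists and $D^+\|z(t)\|_{\mcX} = \|\dot{z}(t)\|_{\mcX}$. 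I expect this computation---recognizing that at a zero of $z$ the Dini derivative of the norm collapses exactly to $\|\dot{z}(t)\|_{\mcX}$---to be the only substantive part of the argument; everything else is a direct application of the hypotheses.

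It then remains to bound $\|\dot{z}(t)\|_{\mcX}$. Letting $\xi := x(t) = y(t)$ denote the common value, the governing equations give $\dot{z}(t) = f(t,\xi,u_x(t)) - f(t,\xi,u_y(t))$, where the decisive feature is that \emph{both} evaluations of $f$ occur at the same state $\xi$, so that only the input argument differs. Applying the hypothesized Lipschitz bound in the input at the fixed pair $(t,\xi)$ yields $\|\dot{z}(t)\|_{\mcX} \leq \ell\|u_x(t) - u_y(t)\|_{\mcU}$, which combined with the identity $D^+\|z(t)\|_{\mcX} = \|\dot{z}(t)\|_{\mcX}$ established above gives the claim.
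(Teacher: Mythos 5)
Your proposal is correct and takes essentially the same approach as the paper, whose entire proof is the one-line remark that the result ``follows from the definition of Dini derivative and by Taylor expansions of $x(t+h)$ and $y(t+h)$.'' Your write-up simply makes that argument explicit: the collapse of the Dini derivative to $\|\dot{x}(t)-\dot{y}(t)\|_{\mcX}$ at the coincidence point via first-order expansion, followed by the Lipschitz-in-input bound applied at the common state.
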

\begin{proof}
  The result follows from the definition of  Dini derivative and by
  Taylor expansions of $x(t+h)$ and $y(t+h)$.
\end{proof}

\section{Weak pairings and calculus of non-Euclidean norms} \label{sec:WSIP}
\subsection{Weak pairings definition and properties}
We define the notion of a weak pairing which further weaken the
conditions for a pairing to be a Lumer pairing.

\begin{defn}[Weak pairing] \label{defn:WSIP}
  A \emph{weak pairing} on $\R^n$ is a map $\WSIP{\cdot}{\cdot}:
  \R^n \times \R^n \to \R$ such that the following properties hold:
  \begin{enumerate}
  \item\label{WSIP1}(Subadditivity and continuity of first argument) $\WSIP{x_1+x_2}{y} \leq \WSIP{x_1}{y} + \WSIP{x_2}{y}$, for all $x_1,x_2,y \in \R^n$ and $\WSIP{\cdot}{\cdot}$ is continuous in its first argument, 
  \item\label{WSIP3}(Weak homogeneity) $\WSIP{\alpha x}{y} = \WSIP{x}{\alpha y} = \alpha\WSIP{x}{y}$ and $\WSIP{-x}{-y} = \WSIP{x}{y}$, for all $x,y \in \R^n, \alpha \geq 0$, 
  \item\label{WSIP4}(Positive definiteness) $\WSIP{x}{x} > 0$, for all $x \neq \vectorzeros[n],$
  \item\label{WSIP5}(Cauchy-Schwarz inequality) \\ $|\WSIP{x}{y}| \leq \WSIP{x}{x}^{1/2}\WSIP{y}{y}^{1/2}$, for all $x, y \in \R^n.$
  \end{enumerate}
\end{defn}
From Definition~\ref{defn:WSIP}, any Lumer pairing is a weak pairing, but
not every weak pairing is a Lumer pairing.  When necessary, we distinguish
the symbols for Lumer pairings and weak pairings and make this clear in
Table~\ref{table:symbols}.

\begin{table}[t]
  \centering
  \normalsize
  \begin{tabular}{cc}
    %\hline
    Symbol & Meaning \\ \hline
    \rowcolor[HTML]{C0C0C0} 
    $\|\cdot\|_{p,R}$ & $\ell_p$ norm weighted by $R$, $\|x\|_{p,R} = \|Rx\|_p$. \\ %\hline
    $[\cdot,\cdot]_{p,R}$ & Lumer pairing compatible with $\|\cdot\|_{p,R}$ 
    %\\ %\hline
    %\rowcolor[HTML]{C0C0C0} 
    %$(\cdot,\cdot)_{+,p,R}$ & Deimling pairing compatible with $\|\cdot\|_{p,R}$ 
    \\ %\hline
    \rowcolor[HTML]{C0C0C0}
    $\WSIP{\cdot}{\cdot}_{p,R}$ & Weak pairing compatible with $\|\cdot\|_{p,R}$ \\ %\hline
    %\rowcolor[HTML]{C0C0C0} 
    $\mu_{p,R}(\cdot)$ & \textcolor{black}{Log norm} with respect to $\|\cdot\|_{p,R}$    \\ \hline
    %% \rowcolor[HTML]{C0C0C0}
    %% $D^+\cdot$ & Upper right Dini derivative \\ \hline 
  \end{tabular}
  \vspace{0.3em}
  \caption{Table of symbols. We let $p \in [1,\infty]$ and $R \in
    \real^{n\times n}$ be invertible. If a norm, Lumer pairing, weak pairing, or \textcolor{black}{log norm} does not have a subscript, it is
    assumed to be arbitrary. If the subscript $R$ is not included, $R =
    I_n$.} \label{table:symbols}
\end{table}
\begin{theorem}[Compatibility of weak pairings with norms]
  If $\WSIP{\cdot}{\cdot}$ is a weak pairing on $\real^n$, then $\|\cdot\| =
  \WSIP{\cdot}{\cdot}^{1/2}$ is a norm. Conversely, if $\R^n$ is equipped
  with a norm $\norm{\cdot}{}$, then there exists a weak pairing (but possibly
  many) such that $\WSIP{\cdot}{\cdot} = \|\cdot\|^2$.
\end{theorem}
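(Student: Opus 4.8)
The plan is to treat the two directions separately: the forward direction reduces to verifying the three norm axioms for $\|x\| := \WSIP{x}{x}^{1/2}$, while the converse follows almost immediately from the already-established existence of a compatible Lumer pairing.

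For the forward direction, note first that $\|x\|$ is well defined and nonnegative, since positive definiteness (Property~\ref{WSIP4}) gives $\WSIP{x}{x}>0$ for $x\neq\vectorzeros[n]$, and weak homogeneity (Property~\ref{WSIP3}) with $\alpha=0$ forces $\WSIP{\vectorzeros[n]}{\vectorzeros[n]}=0$; this also yields $\|x\|=0\iff x=\vectorzeros[n]$. For absolute homogeneity I would compute $\WSIP{\alpha x}{\alpha x}=\alpha^2\WSIP{x}{x}$ for $\alpha\geq 0$ by applying weak homogeneity in each slot, and handle $\alpha<0$ by writing $\alpha x=-(|\alpha|x)$ and invoking the sign-invariance $\WSIP{-z}{-z}=\WSIP{z}{z}$ from Property~\ref{WSIP3}; taking square roots gives $\|\alpha x\|=|\alpha|\,\|x\|$.

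The only substantive step is the triangle inequality, which I would establish exactly as in the classical passage from an inner product to its induced norm. Using subadditivity in the first argument (Property~\ref{WSIP1}) and then the Cauchy--Schwarz inequality (Property~\ref{WSIP5}) on each resulting term, I obtain
\begin{align*}
  \|x+y\|^2 &= \WSIP{x+y}{x+y} \leq \WSIP{x}{x+y} + \WSIP{y}{x+y} \\
  &\leq \big(\|x\|+\|y\|\big)\,\|x+y\|.
\end{align*}
Dividing by $\|x+y\|$ when it is nonzero (the case $\|x+y\|=0$ being trivial) yields $\|x+y\|\leq\|x\|+\|y\|$, completing the verification that $\|\cdot\|$ is a norm.

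For the converse I would not construct a pairing explicitly. Given a norm $\|\cdot\|$, the Lemma on norms and Lumer pairings supplies a compatible Lumer pairing $[\cdot,\cdot]$ with $[x,x]=\|x\|^2$. Since the Lumer axioms are strictly stronger than the weak-pairing axioms of Definition~\ref{defn:WSIP}---additivity implies subadditivity, full homogeneity implies both weak homogeneity and sign-invariance, and positive definiteness and Cauchy--Schwarz coincide---this Lumer pairing is itself a compatible weak pairing, giving existence; the parenthetical ``possibly many'' follows because compatible Lumer pairings need not be unique and the weaker axioms only enlarge the admissible set. I expect the triangle inequality to be the main (and essentially the only) obstacle, and even there the difficulty is merely bookkeeping: the subadditivity hypothesis applies only to the first argument, so one must keep $x+y$ fixed in the second slot throughout and rely on Cauchy--Schwarz rather than on any symmetry of the pairing.
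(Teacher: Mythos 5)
Your proposal is correct and follows essentially the same route as the paper: the forward direction verifies the norm axioms with the identical subadditivity-plus-Cauchy--Schwarz argument for the triangle inequality, and the converse invokes the existence of a compatible Lumer pairing and the fact that every Lumer pairing is a weak pairing. If anything, you are slightly more careful than the paper's own write-up, since you explicitly handle negative $\alpha$ via the sign-invariance $\WSIP{-z}{-z}=\WSIP{z}{z}$ and the degenerate case $\|x+y\|=0$; the only detail left implicit (in both your argument and the paper's) is that continuity of a Lumer pairing in its first argument follows from linearity on the finite-dimensional space $\real^n$.
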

\begin{proof}
	First, we show that $\|\cdot\| = \WSIP{\cdot}{\cdot}^{1/2}$ is a norm. Clearly it is positive definite by property \ref{WSIP4}. For homogeneity,
	\begin{align*}
	\|\alpha x\|^2 &= \WSIP{\alpha x}{\alpha x} = \alpha^2\WSIP{x}{x} \\ \implies \quad \|\alpha x\| &= |\alpha|\WSIP{x}{x}^{1/2} = |\alpha|\|x\|,
	\end{align*}
	by weak homogeneity, property \ref{WSIP3}. Finally, regarding the triangle inequality, 
	\begin{align*}
	\|x + y\|^2 &= \WSIP{x+y}{x+y} \leq \WSIP{x}{x+y} + \WSIP{y}{x+y} \\&\leq (\|x\| + \|y\|)\|x + y\|,
	\end{align*}
	by subadditivity, property \ref{WSIP1}, and the Cauchy-Schwarz inequality, property \ref{WSIP5}. This implies that $\|x + y\| \leq \|x\| + \|y\|$. \\
	For the converse, the proof is identical to that in \cite{GL:61} since any Lumer pairing is a weak pairing. 
\end{proof}

\textcolor{black}{As a consequence, if $\WSIP{x}{y}$ is a weak pairing compatible with the norm $\|\cdot\|$, then $\WSIP{Rx}{Ry}$ is a weak pairing compatible with the $R$-weighted norm $\|\cdot\|_R$ for any invertible $R \in \real^{n \times n}$.}

We now define two desirable properties of weak pairings.

\begin{defn}[Additional weak pairing properties]\label{def:LumerDeimlingCurve}
  Let $\WSIP{\cdot}{\cdot}$ be compatible with the norm $\|\cdot\|$. Then $\WSIP{\cdot}{\cdot}$ satisfies
  \begin{enumerate}
  %\item\label{def:Lumerequality} \emph{Lumer's equality} if, for every
  %  matrix $A \in \R^{n \times n}$,
  %  $$\mu(A) = \sup_{\|x\| = 1} \WSIP{Ax}{x} = \sup_{x \neq \vectorzeros[n]} \frac{\WSIP{Ax}{x}}{\|x\|^2},$$
  \item\label{def:Deimlinginequality} \emph{Deimling's inequality} if, for
    all $x,y \in \real^n$,
    \begin{equation}\label{eq:DeimlingIneq}
    \WSIP{x}{y} \leq \|y\|\lim_{h \to 0^+}\frac{\|y + hx\| - \|y\|}{h},
    \end{equation}
  \item\label{def:curvenormderivative} \emph{the curve norm derivative
    formula} if, for every differentiable $x: {]a,b[} \to \R^n$ and for
    almost every $t \in {]a,b[},$
    \begin{equation}\label{eq:curvenormderivative}
      \|x(t)\|D^+\|x(t)\| = \WSIP{\dot{x}(t)}{x(t)}.
    \end{equation}
  \end{enumerate}
\end{defn}

Note that any given weak pairing may or may not satisfy these properties.
It is essentially known that any Lumer pairing satisfies Deimling's
inequality; see Appendix~\ref{app:Deimling} Lemma~\ref{SupInfPairings}.
\begin{theorem}[Lumer's equality for weak pairings]\label{theorem:lumer}
	Let $\|\cdot\|$ be a norm on $\real^n$ with compatible weak pairing $\WSIP{\cdot}{\cdot}$ satisfying Deimling's inequality,~\eqref{eq:DeimlingIneq}. Then for all $A \in \real^{n\times n}$
	\begin{equation}
	\mu(A) = \sup_{\|x\| = 1} \WSIP{Ax}{x} = \sup_{x \neq \vectorzeros[n]} \frac{\WSIP{Ax}{x}}{\|x\|^2}.
	\end{equation}
\end{theorem}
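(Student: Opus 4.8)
The plan is to prove that the two displayed suprema equal each other and equal $\mu(A)$ in turn. The equality $\sup_{\|x\|=1}\WSIP{Ax}{x} = \sup_{x\neq\vectorzeros[n]}\WSIP{Ax}{x}/\|x\|^2$ is immediate from weak homogeneity (property~\ref{WSIP3}): for $x\neq\vectorzeros[n]$ and $\hat{x}=x/\|x\|$ one has $\WSIP{Ax}{x}=\|x\|^2\WSIP{A\hat x}{\hat x}$, so the quotient depends only on the direction $\hat x$. It then remains to show $\mu(A)=\sup_{\|x\|=1}\WSIP{Ax}{x}=:S$.

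First I would establish $S\le\mu(A)$. Fix $x$ with $\|x\|=1$. Deimling's inequality gives $\WSIP{Ax}{x}\le\|x\|\lim_{h\to0^+}(\|x+hAx\|-\|x\|)/h$, and since $\|x+hAx\|=\|(I_n+hA)x\|\le\|I_n+hA\|$ by the definition of the induced norm, passing to the limit yields $\WSIP{Ax}{x}\le\mu(A)$. Taking the supremum over the unit sphere gives $S\le\mu(A)$.

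The harder inequality is $\mu(A)\le S$, and this is where the proof must do real work: a weak pairing can lie strictly below the one-sided directional-derivative pairing pointwise, so comparing values at a single vector is not enough. The idea is to test the pairing along the vectors that realize the induced norm. For each small $h>0$, compactness of the unit sphere provides $x_h$ with $\|x_h\|=1$ and $\|I_n+hA\|=\|(I_n+hA)x_h\|$; write $z_h=(I_n+hA)x_h$, which is nonzero for $h$ small. Using compatibility, $\|z_h\|^2=\WSIP{z_h}{z_h}$; subadditivity (property~\ref{WSIP1}) and weak homogeneity split this as $\WSIP{z_h}{z_h}\le\WSIP{x_h}{z_h}+h\WSIP{Ax_h}{z_h}$, while Cauchy--Schwarz (property~\ref{WSIP5}) bounds $\WSIP{x_h}{z_h}\le\|z_h\|$. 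Dividing by $\|z_h\|>0$ gives $(\|z_h\|-1)/h\le\WSIP{Ax_h}{z_h}/\|z_h\|$.

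Finally I would replace $Ax_h$ by $Az_h$ at the cost of an $O(h)$ error: since $Ax_h=Az_h-hA^2x_h$, subadditivity and Cauchy--Schwarz give $\WSIP{Ax_h}{z_h}\le\WSIP{Az_h}{z_h}+h\|A\|^2\|z_h\|$, whence $\WSIP{Ax_h}{z_h}/\|z_h\|\le\|z_h\|\,(\WSIP{Az_h}{z_h}/\|z_h\|^2)+h\|A\|^2\le\|z_h\|S+h\|A\|^2$. Combining with the previous inequality yields $(\|I_n+hA\|-1)/h\le\|z_h\|S+h\|A\|^2$; letting $h\to0^+$ and using $\|z_h\|=\|I_n+hA\|\to1$ gives $\mu(A)\le S$, completing the proof. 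The main obstacle is precisely this last direction, and the key device is to evaluate the weak pairing at the induced-norm maximizers $x_h$ (rather than at a fixed vector) and to absorb the discrepancy between $Ax_h$ and $Az_h$ into a vanishing error using only subadditivity, weak homogeneity, and Cauchy--Schwarz, so that Deimling's inequality is needed only for the easy direction.
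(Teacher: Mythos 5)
Your proposal is correct, and your argument for the hard inequality $\mu(A)\le S:=\sup_{\|x\|=1}\WSIP{Ax}{x}$ takes a genuinely different route from the paper's. The paper never invokes maximizers of the induced norm: it sets $\Omega(v)=\WSIP{Av}{v}/\|v\|^2$, uses Cauchy--Schwarz, subadditivity, and weak homogeneity to obtain the lower bound $\|(I_n-hA)v\|\ge\bigl(1-h\sup_{\|v\|=1}\Omega(v)\bigr)\|v\|$, hence an upper bound on $\|(I_n-hA)^{-1}x\|/\|x\|$, and then transfers this to $\|I_n+hA\|$ through the resolvent-type identity $(I_n-hA)^{-1}=I_n+hA+h^2A^2(I_n-hA)^{-1}$ before letting $h\to0^+$. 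You instead pick, for each small $h>0$, a unit-norm maximizer $x_h$ of $x\mapsto\|(I_n+hA)x\|$, pair $z_h=(I_n+hA)x_h$ against itself, and use subadditivity, weak homogeneity, compatibility, and Cauchy--Schwarz to get $(\|z_h\|-1)/h\le\WSIP{Ax_h}{z_h}/\|z_h\|$, then shift $Ax_h$ to $Az_h$ at an $O(h)$ cost; all of your individual estimates check out, and, like the paper, you need Deimling's inequality only for the easy direction $S\le\mu(A)$. The trade-off is this: your argument is more elementary---no operator inverses, no Neumann-type identity---but it leans on attainment of the supremum defining the induced norm, i.e., on compactness of the unit sphere, so it is intrinsically finite-dimensional; the paper's resolvent-style argument never needs the supremum to be attained, which is precisely the feature that lets Lumer's equality survive in general Banach spaces, the setting of Lumer's original result that this theorem generalizes.
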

\begin{proof}
	%Note that $$\mu(A) = \sup_{\|x\| = 1} (Ax,x)_+ \geq \sup_{\|x\| =1}\WSIP{Ax}{x}$$ by Lemma~\ref{lemma:lumerdeimling} in conjunction with Deimling's inequality. 
	By Deimling's inequality, for every $x \in \real^n \setminus \{\vectorzeros[n]\}$,
	\begin{align*}
	\WSIP{Ax}{x} &\leq \|x\|\lim_{h \to 0^+} \frac{\|x + hAx\| - \|x\|}{h} \\
	&\leq \|x\|^2 \lim_{h \to 0^+} \frac{\|I_n + hA\|-1}{h} = \|x\|^2\mu(A).
	\end{align*}
	Thus, the inequality $\mu(A) \geq \sup_{x \neq \vectorzeros[n]} \frac{\WSIP{Ax}{x}}{\|x\|^2}$ holds.
	For the other inequality, for $v \neq \vectorzeros[n]$, we define $\Omega(v) = \frac{\WSIP{Av}{v}}{\|v\|^2}$ and note that for every $v \neq \vectorzeros[n]$ and $h > 0$, 
	\begin{align*}
	\|(I_n - hA)v\| &\geq \frac{1}{\|v\|}\WSIP{(I_n - hA)v}{v} \geq (1 - h\Omega(v))\|v\| \\
	& \geq (1 - h \sup\nolimits_{\|v\| = 1} \Omega(v))\|v\|,
	\end{align*}
	where the first inequality holds by Cauchy-Schwarz, the second by subadditivity, and the final one since $-h < 0$ and by weak homogeneity of the weak pairing. Moreover, note that $\sup_{\|v\| = 1} \Omega(v) \leq \|A\| \neq \infty$ by Cauchy-Schwarz for the weak pairing. Then for small enough $h > 0$, $I_n - hA$ is invertible and given by 
	\begin{multline*}
	(I_n - hA)^{-1} = I_n + hA + h^2A^2(I_n - hA)^{-1} \\
	\implies \|(I_n + hA)v\| \leq \|(I_n - hA)^{-1}v\| + h^2\|A^2(I_n - hA)^{-1}v\|, 
	\end{multline*}
	where the last implication holds for all $v \in \real^n$ because of the triangle inequality.
	Moreover, defining $x = (I_n - hA)v$, for sufficiently small $h > 0$, we have
	\begin{equation}\label{eq:inverse}
	\frac{\|(I_n - hA)^{-1}x\|}{\|x\|} = \frac{\|v\|}{\|(I_n - hA)v\|} \leq \frac{1}{1 - h\sup\nolimits_{\|v\| = 1}\Omega(v)}
	\end{equation}
	Then
	\begin{align*}
	&\mu(A) = \lim_{h \to 0^+} \!\sup_{x \neq \vectorzeros[n]} \frac{\|(I_n + hA)x\|/\|x\| - 1}{h} \\
	&\leq \lim_{h \to 0^+} \!\sup_{x \neq \vectorzeros[n]} \!\frac{\|(I_n - hA)^{-1}x\| + h^2\|A^2(I_n - hA)^{-1}x\| - \|x\|}{h\|x\|} \\
	&\leq \lim_{h \to 0^+} \!\sup_{x \neq \vectorzeros[n]} \frac{\|(I_n - hA)^{-1}x\|/\|x\| - 1}{h} \\
	&\leq \lim_{h \to 0^+} \frac{1}{h}\Big(\frac{1}{1 - h \sup\nolimits_{\|v\| = 1}\Omega(v)} - 1\Big) = \sup_{\|x\| = 1} \WSIP{Ax}{x},
	\end{align*}
	where the first line is the definition of the induced norm, the second line holds by the triangle inequality, the third line holds due to the subadditivity of the supremum, and the last line holds because the inequality in \eqref{eq:inverse} holds for all $x \neq \vectorzeros[n]$.
\end{proof}

In the following subsections, we propose weak pairings for the $\ell_p$
norms in $\real^n$, $p \in [1,\infty]$, and show that they satisfy the two
properties in Definition~\ref{def:LumerDeimlingCurve}.

\subsection{Weak pairings for differentiable norms}
Each $\ell_p$ norm for $p \in {]1,\infty[}$ is differentiable. Therefore,
the corresponding Lumer pairing is unique, given in \textcolor{black}{Lemma~\ref{eq:GateauxFormula}}, and
satisfies Deimling's inequality,~\eqref{eq:DeimlingIneq}. Thus, we pick the weak pairing to be the
unique compatible Lumer pairing from Lemma~\ref{GdSIP}. Moreover, because
of differentiability of the norm, in Appendix~\ref{app:Deimling}
Lemma~\ref{lemma:DeimlingDerivative} we show that the unique Lumer pairing
satisfies the curve norm derivative formula in
Definition~\ref{def:LumerDeimlingCurve}\ref{def:curvenormderivative}.

%\begin{remark} In the special case of $p = 2$ and $R = P^{1/2}$ where $P =
%	P^\top \succ 0$, by using Corollary~\ref{corr:ell2}, contraction
%	equivalence~\ref{ctGen:4} may equivalently be expressed
%	as \begin{equation} P\jac{f}(t,x) + \jac{f}(t,x)^\top P \preceq
%	2bP, \end{equation} which is a Lyapunov
%	inequality. \oprocend \end{remark}

\subsection{Non-differentiable norms: The $\ell_1$ norm}
The $\ell_1$ norm given by $\|x\|_1 = \sum_{i = 1}^n |x_i|$ fails to be
differentiable at points where $x_i =
0$. Hence, we
propose a pairing and show that it is a Lumer pairing compatible with the $\ell_1$
norm. 
\begin{defn}[Sign pairing]
  For $R \in \R^{n \times n}$ invertible, let $\|\cdot\|_{1,R}$ be the
  weighted $\ell_1$ norm given by $\|x\|_{1,R} = \|Rx\|_{1}$. The
  \emph{sign pairing} $\WSIP{\cdot}{\cdot}_{1,R}: \R^n \times \R^n \to \R$ is defined
  by
  \begin{equation}
    \WSIP{x}{y}_{1,R} := \|y\|_{1,R}\sign(Ry)^\top Rx.
  \end{equation}
\end{defn}
\begin{lemma}
	The sign pairing is a Lumer pairing compatible with the weighted $\ell_1$ norm.
\end{lemma}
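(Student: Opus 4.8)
The plan is to verify directly that the sign pairing satisfies the four defining axioms of a Lumer pairing in Definition~\ref{def:SIP} together with the compatibility relation $\WSIP{x}{x}_{1,R} = \|x\|_{1,R}^2$. I would begin with compatibility, since it simultaneously underwrites positive definiteness: expanding gives $\WSIP{x}{x}_{1,R} = \|x\|_{1,R}\sign(Rx)^\top Rx = \|x\|_{1,R}\sum_{i=1}^n \sign((Rx)_i)(Rx)_i = \|x\|_{1,R}\sum_{i=1}^n |(Rx)_i| = \|x\|_{1,R}^2$, using the entrywise identity $\sign(s)\,s = |s|$ (which also holds at $s=0$). Because $R$ is invertible and $\|\cdot\|_{1,R}$ is a genuine norm, this quantity is strictly positive whenever $x \neq \vectorzeros[n]$, which is exactly property~\ref{SIP3}, and it confirms compatibility.

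Next I would dispatch the two linear-in-the-first-argument requirements. Additivity (property~\ref{SIP1}) is immediate, since $x \mapsto \|y\|_{1,R}\sign(Ry)^\top Rx$ is a linear functional, and the same observation yields $\WSIP{\alpha x}{y}_{1,R} = \alpha\WSIP{x}{y}_{1,R}$ for every $\alpha \in \R$. The only delicate point of homogeneity (property~\ref{SIP2}) is scaling the second argument, where I must track how $\sign$ responds to a negative factor. For $\alpha \neq 0$ I would use the entrywise identity $\sign(\alpha Ry) = \sign(\alpha)\sign(Ry)$ together with $\|\alpha y\|_{1,R} = |\alpha|\,\|y\|_{1,R}$, so that $\WSIP{x}{\alpha y}_{1,R} = |\alpha|\sign(\alpha)\,\|y\|_{1,R}\sign(Ry)^\top Rx = \alpha\WSIP{x}{y}_{1,R}$, invoking $|\alpha|\sign(\alpha) = \alpha$; the case $\alpha = 0$ is trivial since both sides vanish. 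This is the step I expect to require the most care, even though it remains elementary.

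Finally, for the Cauchy--Schwarz inequality (property~\ref{SIP4}), substituting the compatibility formula into the right-hand side turns the target into $|\WSIP{x}{y}_{1,R}| \leq \|x\|_{1,R}\,\|y\|_{1,R}$. I would obtain this from a H\"older-type estimate: since every entry of $\sign(Ry)$ has absolute value at most $1$,
\begin{equation*}
  |\sign(Ry)^\top Rx| \leq \sum_{i=1}^n |(Rx)_i| = \|Rx\|_1 = \|x\|_{1,R},
\end{equation*}
and multiplying through by $\|y\|_{1,R}$ closes the bound. Assembling these four verifications establishes that the sign pairing is a Lumer pairing, while the compatibility computation from the first step confirms it is compatible with $\|\cdot\|_{1,R}$.
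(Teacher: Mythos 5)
Your proposal is correct and follows essentially the same route as the paper: a direct verification of the Lumer-pairing axioms, with the identity $\sign(\alpha Ry) = \sign(\alpha)\sign(Ry)$ handling homogeneity in the second argument and the H\"older-type bound $|\sign(Ry)^\top Rx| \leq \|Rx\|_1$ giving Cauchy--Schwarz. If anything, your treatment of positive definiteness is slightly more careful than the paper's (which only writes $\WSIP{x}{x}_{1,R} \geq 0$), since you explicitly invoke invertibility of $R$ to get strict positivity for $x \neq \vectorzeros[n]$.
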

\begin{proof}
  We verify the four properties of a Lumer pairing in Definition~\ref{def:SIP}.
  Regarding property~\ref{WSIP1}, for $x_1, x_2, y \in \R^n$,
  \begin{align*}
    \WSIP{x_1 + x_2}{y}_{1,R} &= \|y\|_{1,R} \sign(Ry)^\top R(x_1 + x_2) \\&= \|y\|_{1,R}\left( \sign(Ry)^\top Rx_1 + \sign(Ry)^\top Rx_2\right) \\ &= \WSIP{x_1}{y}_{1,R} + \WSIP{x_2}{y}_{1,R}.
  \end{align*}
  Regarding property~\ref{WSIP3}, for $\alpha \in \R$,
  \begin{align*}
    \WSIP{\alpha x}{y}_{1,R} = \|y\|_{1,R}\sign(Ry)^\top R(\alpha x) = \alpha\WSIP{x}{y}_{1,R}.
  \end{align*}
  To check homogeneity in the second argument, we see that $\alpha =
  0$ is trivial, so for $\alpha \neq 0$
  \begin{align*}
    \WSIP{x}{\alpha y}_{1,R} &= \|\alpha y\|_{1,R}\sign(\alpha Ry)^\top Rx \\&= |\alpha|\frac{\alpha}{|\alpha|}\|y\|_{1,R}\sign(Ry)^\top Rx = \alpha\WSIP{x}{y}_{1,R}.
  \end{align*}
  Regarding property~\ref{WSIP4},
  \begin{align*}
    \WSIP{x}{x}_{1,R} &= \|x\|_{1,R}\sign(Rx)^\top Rx 
    = \|x\|_{1,R}^2 \geq 0.
  \end{align*}
  This also proves compatibility.  Regarding
  property~\ref{WSIP5},
  \begin{align*}
    |\WSIP{x}{y}_{1,R}| &= \WSIP{y}{y}_{1,R}^{1/2}|\sign(Ry)^\top Rx| \\ &\leq |\sign(Rx)^\top Rx|\WSIP{y}{y}_{1,R}^{1/2} = \WSIP{x}{x}_{1,R}^{1/2}\WSIP{y}{y}_{1,R}^{1/2}.
  \end{align*}
\end{proof}
Since the sign pairing is an Lumer pairing, it is a weak pairing that satisfies Deimling's inequality,~\eqref{eq:DeimlingIneq}. Finally, we separately establish
the curve norm derivative formula,~\eqref{eq:curvenormderivative}.

\begin{theorem}[$\ell_1$ curve norm derivative formula]
	Let $x:\; ]a, b[\; \to \R^n$ be differentiable. Then
	\begin{enumerate}
		\item $D^+\|x(t)\|_{1,R} = \sign(Rx(t))^\top R\dot{x}(t),$
		for almost every $t \in {]a, b[}$. 
		\item $\|x(t)\|_{1,R}D^+\|x(t)\|_{1,R} = \WSIP{\dot{x}(t)}{x(t)}_{1,R}$
		for almost every $t \in {]a, b[}.$
	\end{enumerate}
\end{theorem}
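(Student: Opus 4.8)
The plan is to prove statement (i) first and then obtain statement (ii) as an immediate corollary. Once (i) holds for almost every $t$, multiplying both sides by the scalar $\|x(t)\|_{1,R}$ and recalling that $\WSIP{\dot{x}(t)}{x(t)}_{1,R} = \|x(t)\|_{1,R}\sign(Rx(t))^\top R\dot{x}(t)$ by the definition of the sign pairing yields (ii) on the same full-measure set. Thus the entire content lies in (i), and no separate treatment of the case $x(t) = \vectorzeros[n]$ is needed.

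To prove (i), I would set $y(t) := Rx(t)$, so that $y$ is differentiable with $\dot{y}(t) = R\dot{x}(t)$ and $\|x(t)\|_{1,R} = \|y(t)\|_1 = \sum_{i=1}^n |y_i(t)|$, and then differentiate this sum coordinatewise. Applying Lemma~\ref{lemma:absvalue} to each scalar curve $y_i$ gives, for every $t$, $D^+|y_i(t)| = \dot{y}_i(t)\sign(y_i(t)) + |\dot{y}_i(t)|\chi_{\{0\}}(y_i(t))$. The first summand is exactly the $i$-th contribution I want; the second is the obstruction, and the key is to show it vanishes for almost every $t$.

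The main obstacle, and the crux of the argument, is controlling the indicator term $|\dot{y}_i(t)|\chi_{\{0\}}(y_i(t))$. This term is nonzero only at times $t$ for which $y_i(t) = 0$ and $\dot{y}_i(t) \neq 0$. At any such time $y_i$ is strictly monotone in a neighborhood, so $t$ is an \emph{isolated} zero of $y_i$; since the set of isolated zeros of a continuous function on an interval is countable, it has measure zero. Taking the finite union over $i \in \{1,\dots,n\}$, I conclude that for almost every $t$ each coordinate satisfies either $y_i(t) \neq 0$ or $\dot{y}_i(t) = 0$, so that $|y_i|$ is in fact two-sided differentiable at $t$ with $\tfrac{d}{dt}|y_i(t)| = \dot{y}_i(t)\sign(y_i(t))$. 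Here, when $y_i(t) = \dot{y}_i(t) = 0$ one checks directly that $|y_i(t+h)| = o(h)$, giving derivative $0$, consistent with $\sign(0) = 0$.

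Since each $|y_i|$ is genuinely differentiable at almost every $t$, the finite sum $\|x(t)\|_{1,R} = \sum_i |y_i(t)|$ is differentiable there; its ordinary derivative is the sum of the coordinate derivatives, and the upper right Dini derivative coincides with this ordinary derivative. Hence, for almost every $t$, $D^+\|x(t)\|_{1,R} = \sum_{i=1}^n \dot{y}_i(t)\sign(y_i(t)) = \sign(y(t))^\top \dot{y}(t) = \sign(Rx(t))^\top R\dot{x}(t)$, which is statement (i); statement (ii) then follows as noted. The only delicate point is the measure-zero argument for the indicator term — working with honest two-sided differentiability almost everywhere is what lets me sum the coordinate contributions with equality rather than only the subadditivity inequality that $D^+$ satisfies in general.
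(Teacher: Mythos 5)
Your proposal is correct and follows the same skeleton as the paper's proof: write $\|x(t)\|_{1,R} = \sum_{i=1}^n |(Rx(t))_i|$, apply Lemma~\ref{lemma:absvalue} coordinatewise, and show that the indicator terms $|(R\dot{x}(t))_i|\chi_{\{0\}}((Rx(t))_i)$ vanish for almost every $t$. Where you genuinely depart from the paper is in the crux measure-zero argument and in the bookkeeping around summing derivatives. The paper argues by cases on the zero set of each coordinate: either it is a single point or it contains an interval, and on intervals the derivative vanishes almost everywhere; taken literally this dichotomy is not exhaustive (the zero set of a differentiable function can be any closed set, e.g.\ a convergent sequence of points or a Cantor-type set), so the paper's case analysis is best read as shorthand. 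Your argument --- the indicator term is nonzero only where $(Rx(t))_i = 0$ and $(R\dot{x}(t))_i \neq 0$, any such $t$ is an isolated zero, and the isolated points of any subset of $\real$ form a countable (hence null) set --- covers all cases at once and is the tighter way to state it. You also add a refinement the paper leaves implicit: off the exceptional null set each $|(Rx(\cdot))_i|$ is two-sidedly differentiable, so the derivative of the sum is honestly the sum of derivatives, sidestepping the fact that $D^+$ is in general only subadditive. (The paper's interchange is still valid, because each coordinate has a genuine right-hand derivative as a limit, and a finite sum of right-differentiable functions is right-differentiable with the summed derivative; but you are right that this step deserves justification.) One small blemish: differentiability at $t$ with $\dot{y}_i(t) \neq 0$ does \emph{not} imply $y_i$ is strictly monotone in a neighborhood of $t$ (consider $s \mapsto s + 2s^2\sin(1/s)$ at $s = 0$); however, the conclusion you actually need --- that $t$ is an isolated zero --- follows directly from $y_i(t+h) = h\dot{y}_i(t) + o(h) \neq 0$ for all sufficiently small $|h| > 0$, so your proof stands with that one-line repair.
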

\begin{proof}
  Since $\textcolor{black}{\|x(t)\|_{1,R} = \sum_{i = 1}^{n}|(Rx(t))_i|}$, it suffices to compute
  $D^+|x_i(t)|$. Then by Lemma \ref{lemma:absvalue}:
  \begin{align*}
    &D^+\|x(t)\|_{1,R}  = \sum\nolimits_{i = 1}^n \textcolor{black}{D^+|(Rx(t))_i|} \\&= \sum\nolimits_{i = 1}^n \left((R\dot{x}(t))_i\sign((Rx)_i) + \textcolor{black}{|(R\dot{x}(t))_i|\chi_{\{0\}}((Rx(t))_i)}\right) \\
    &= \sign(Rx(t))^\top R\dot{x}(t) + \sum\nolimits_{i = 1}^n \textcolor{black}{|(R\dot{x}(t))_i|\chi_{\{0\}}((Rx(t))_i)}.
  \end{align*}
  Multiplying both sides by $\|x(t)\|_{1,R}$ gives
  \begin{align*} 
    \|x(t)\|_{1,R}&D^+\|x(t)\|_{1,R} = \textcolor{black}{\WSIP{\dot{x}(t)}{x(t)}_{1,R}} \\ &\quad + \|x(t)\|_{1,R}\sum\nolimits_{i = 1}^n \textcolor{black}{|(R\dot{x}(t))_i|\chi_{\{0\}}((Rx(t))_i)}.
  \end{align*}
  To prove both results, it suffices to show that $\sum_{i = 1}^n
  \textcolor{black}{|(R\dot{x}(t))_i|\chi_{\{0\}}((Rx(t))_i)} = 0$ for almost every $t \in
  {]a,b[}$.  If $(Rx(t))_i \neq 0$, for all $i \in \until{n}$ and for all
  $t \in {]a,b[}$, the result holds \textcolor{black}{since $\chi_{\{0\}}((Rx(t))_i) = 0$ for all $t \in {]a,b[}, i \in \until{n}$}. So suppose $(Rx(t))_i = 0$ for
  some $i$. Then either $(Rx(t))_i = 0$ for a single $t$,
  in which case the result holds. Otherwise $(Rx(t))_i =
  0$ for all $t \in I \subseteq\; ]a, b[$, where $I$ is an interval. In
          this case, by differentiability of $x$, we have
          that $(R\dot{x}(t))_i = 0$ for almost every $t \in I$, so
	  $|(R\dot{x}(t))_i|\chi_{\{0\}}((Rx)_i(t)) = 0$
	  for almost every $t \in I$ and hence almost every $t \in {]a, b[}$. 
\end{proof}

\subsection{Non-differentiable norms: The $\ell_\infty$ norm}\label{ellinf}
The $\ell_\infty$ norm given by $\|x\|_\infty = \max_{i \in \until{n}}
|x_i|$ fails to be differentiable at points where the infinity norm is
achieved in more than one index. We propose a map and show that it is a weak pairing
that satisfies the properties in Definition~\ref{def:LumerDeimlingCurve}.
\begin{defn}[Max pairing] \label{def:MaxWSIP}
  For $R \in \R^{n \times n}$ invertible, let $\|\cdot\|_{\infty,R}$ be the
  weighted $\ell_\infty$ norm given by $\|x\|_{\infty,R} =
  \|Rx\|_{\infty}$. The \emph{max pairing} $\WSIP{\cdot}{\cdot}_{\infty,R}:
  \R^n \times \R^n \to \R$ is defined by
  \begin{equation}
    \WSIP{x}{y}_{\infty,R} := \max_{i \in I_{\infty}(Ry)}(Rx)_i (Ry)_i,
  \end{equation}
  where $\Iinfty(v) =
  \setdef{j\in\until{n}}{|v_j|=\norm{v}{\infty}}.$ 
\end{defn}
\begin{table*}[t]\centering
  \normalsize
    \resizebox{1\textwidth}{!}
            {\begin{tabular}{%
	p{0.28\linewidth}%
	p{0.28\linewidth}%
	p{0.4\linewidth}%
      }
      Norm & Weak Pairing
      & Logarithmic norm%$\forall x\not=\vect{0}_n$ 
      \\
      \hline
      \rowcolor{LightGray}    &&  \\[-2ex]
      \rowcolor{LightGray}
      $ \begin{aligned}
	\norm{x}{2,P^{1/2}} = \sqrt{x^\top  P x}
      \end{aligned}$
      &
      $\begin{aligned}
	\WSIP{x}{y}_{2, P^{1/2}} &= x^\top Py
	%\mu_{2,P^{1/2}}(A) = \subscr{\lambda}{max}\Big( \frac{P A P^{-1} + A^\top   }{2} \Big)
      \end{aligned}$
      &
      $\begin{aligned}
	\mu_{2,P^{1/2}}(A) &= \min\setdef{b \in \real}{A^\top P + PA \preceq 2bP} \\ &= \tfrac{1}{2}\lambda_{\max}(PAP^{-1} + A^\top)
	\\ &= \max_{\|x\|_{2,P^{1/2}}=1} x^\top PAx 
      \end{aligned}
      $
      %% & L~\ref{lemma:2norm-Lyapunov}
      \\[2ex]
      &&  \\[-2ex]
      $ \begin{aligned}
	&\norm{x}{p} = \Big(\sum_{i} |x_i|^p\Big)^{1/p}\!\!\!, p \in {]1,\infty[}
      \end{aligned}$
      &
      $\begin{aligned}
	\WSIP{x}{y}_{p} &= \|y\|_{p}^{2-p}(y \circ |y|^{p-2})^\top x
      \end{aligned}$
      &
      $\begin{aligned}
	\mu_{p}(A) &= \max_{\|x\|_{p} = 1} (x \circ |x|^{p-2})^\top Ax
      \end{aligned}
      $
      %% & L~\ref{lemma:2norm-Lyapunov}
      \\
      \rowcolor{LightGray}    &&  \\[-2ex]
      \rowcolor{LightGray}
      %&& \\[-1ex]
      $ \begin{aligned}
	\norm{x}{1} &= \sum_i |x_i|
      \end{aligned}$
      &
      $\begin{aligned}
	\WSIP{x}{y}_{1} &= \|y\|_{1}\sign(y)^\top x
      \end{aligned}$
      & $\begin{aligned}
	\mu_{1}(A) &= \max_{j \in \until{n}} \Big(a_{jj} + \sum_{i \neq j} |a_{ij}|\Big) \\ &= \sup_{\|x\|_{1} = 1}
	\sign(x)^\top Ax 
      \end{aligned}$
      % & L~\ref{lemma:1norm-Lyapunov}
      \\[2ex]      
      %%%%%%%%%%%%%%%%%%%%%%%%%%%%%%%%%%    
      &&  \\[-2ex]      
      %&& \\[-1ex]
      $ \begin{aligned}
	\norm{x}{\infty} &= \max_i |x_i|
      \end{aligned}$
      &
      $\begin{aligned}
	\WSIP{x}{y}_{\infty} &= \max_{i \in I_{\infty}(y)} x_iy_i
      \end{aligned}$
      & $\begin{aligned}
	\mu_{\infty}(A) &= \max_{i \in \until{n}} \Big(a_{ii} + \sum_{j \neq i} |a_{ij}|\Big) \\ &=\max_{\|x\|_{\infty} = 1}
	\max_{i \in I_{\infty}(x)} (Ax)_ix_i
      \end{aligned}$
      % & L~\ref{lemma:1norm-Lyapunov}
      \\[2ex] \hline
      && \\[-1ex]
  \end{tabular}}
  \caption{Table of norms, weak pairings, and \textcolor{black}{log norms} for weighted
    $\ell_2$, $\ell_p$ for $p\in{]1,\infty[}$, $\ell_1$, and $\ell_\infty$
    norms.  We adopt the shorthand $\Iinfty(x) =
    \setdef{i\in\until{n}}{|x_i|=\norm{x}{\infty}}$. The matrix $P$ is
    positive definite. Only the unweighted $\ell_p$ norms, weak pairings, and
    \textcolor{black}{log norms} for $p \neq 2$ are included here since $\mu_{p,R}(A) =
    \mu_{p}(RAR^{-1})$ for any $p \in
       [1,\infty]$.} \label{table:equivalences}
\end{table*}
\begin{lemma}%[Max WSIP properties]
  The max pairing is a weak pairing compatible with the weighted $\ell_\infty$ norm.
\end{lemma}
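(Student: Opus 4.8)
The plan is to verify directly the four defining properties of a weak pairing in Definition~\ref{defn:WSIP}, working throughout with the abbreviations $u = Rx$ and $w = Ry$, so that $\WSIP{x}{y}_{\infty,R} = \max_{i \in \Iinfty(w)} u_i w_i$. The single observation that drives most of the argument is that, with $y$ held fixed, the index set $\Iinfty(Ry)$ is \emph{constant}, so $x \mapsto \WSIP{x}{y}_{\infty,R}$ is a pointwise maximum of finitely many functions that are linear in $x$. It is worth noting at the outset that full additivity fails (the maximum of a sum is not the sum of maxima), which is precisely why only subadditivity is available and why the max pairing is genuinely a weak pairing rather than a Lumer pairing.

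First I would dispatch compatibility and positive definiteness (property~\ref{WSIP4}) together by evaluating at $y = x$: since every maximizing index $i \in \Iinfty(Rx)$ satisfies $|(Rx)_i| = \|Rx\|_\infty$, we get $\WSIP{x}{x}_{\infty,R} = \max_{i \in \Iinfty(Rx)} (Rx)_i^2 = \|Rx\|_\infty^2 = \|x\|_{\infty,R}^2$, which is strictly positive for $x \neq \vectorzeros[n]$ because $R$ is invertible. Subadditivity (the inequality in property~\ref{WSIP1}) then follows from the elementary bound $\max_i(a_i + b_i) \leq \max_i a_i + \max_i b_i$ applied over the fixed index set $\Iinfty(Ry)$, and continuity in the first argument is immediate since a finite maximum of linear functions is continuous.

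For weak homogeneity (property~\ref{WSIP3}), scaling the first argument by $\alpha \geq 0$ factors straight out of the maximum; scaling the second argument uses $\Iinfty(\alpha Ry) = \Iinfty(Ry)$ together with $(\alpha Ry)_i = \alpha (Ry)_i$ for $\alpha > 0$, while the case $\alpha = 0$ is handled separately by observing that every term $(Rx)_i \cdot 0$ vanishes, so $\WSIP{x}{\vectorzeros[n]}_{\infty,R} = 0$. The sign-flip identity $\WSIP{-x}{-y}_{\infty,R} = \WSIP{x}{y}_{\infty,R}$ is instant from $\Iinfty(-Ry) = \Iinfty(Ry)$ and $(-Rx)_i(-Ry)_i = (Rx)_i(Ry)_i$.

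Finally, for the Cauchy-Schwarz inequality (property~\ref{WSIP5}) I would bound each individual term: for $i \in \Iinfty(Ry)$ we have $|(Rx)_i(Ry)_i| = |(Rx)_i|\,\|Ry\|_\infty \leq \|Rx\|_\infty\,\|Ry\|_\infty =: M$, so every term lies in $[-M,M]$; hence the maximum lies in $[-M,M]$ and its absolute value is at most $M = \WSIP{x}{x}_{\infty,R}^{1/2}\WSIP{y}{y}_{\infty,R}^{1/2}$. The only genuine subtlety, rather than a true obstacle, is the second-argument homogeneity argument (because scaling by $\alpha$ alters $Ry$ yet preserves the maximizing index set only for $\alpha \neq 0$) and remembering that the Cauchy-Schwarz bound must account for a possibly negative maximum; every remaining step is a routine consequence of the definition and of the fact that the maximizing index set depends on $y$ alone.
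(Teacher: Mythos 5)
Your proposal is correct and follows essentially the same route as the paper: a direct verification of the four weak-pairing properties, exploiting that $\Iinfty(Ry)$ is fixed once $y$ is fixed, with positive definiteness and compatibility obtained together from $\WSIP{x}{x}_{\infty,R}=\|x\|_{\infty,R}^2$. Your explicit handling of the $\alpha=0$ case in second-argument homogeneity and of a possibly negative maximum in the Cauchy--Schwarz step is slightly more careful than the paper's write-up, but it is the same argument.
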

\begin{proof}
  We verify the four properties of a weak pairing in Definition~\ref{defn:WSIP}.
  Regarding property~\ref{WSIP1}:
  \begin{align*} 
    &\WSIP{x_1+x_2}{y}_{\infty,R} \\ &\qquad = \max_{i \in I_{\infty}(Ry)} (R(x_1 + x_2))_i(Ry)_i \\&\qquad= \max_{i \in I_{\infty}(Ry)} (Rx_1)_i(Ry)_i + (Rx_2)_i(Ry)_i \\
    &\qquad \leq \max_{i \in I_{\infty}(Ry)} (Rx_1)_i(Ry)_i + \max_{i \in I_{\infty}(Ry)} (Rx_2)_i(Ry)_i \\&\qquad= \WSIP{x_1}{y}_{\infty,R} + \WSIP{x_2}{y}_{\infty,R}.
  \end{align*}
  Further, for fixed $y \in \R^n$, the function
  $x\mapsto\WSIP{x}{y}_{\infty,R}$ is continuous since $I_{\infty}(Ry)$ is
  fixed and the max of continuous functions is continuous.  Regarding
  property~\ref{WSIP3}, for $\alpha \geq 0$,
	\begin{align*}
	\WSIP{\alpha x}{y}_{\infty,R} &= \max_{i \in I_{\infty}(Ry)} (R\alpha x)_i(Ry)_i \\&= \alpha \max_{i \in I_{\infty}(Ry)} (Rx)_i(Ry)_i = \alpha\WSIP{x}{y}_{\infty,R}, \\
	\WSIP{x}{\alpha y}_{\infty,R} &= \max_{i \in I_{\infty}(R\alpha y)} (Rx)_i(R\alpha y)_i \\&= \alpha \max_{i \in I_{\infty}(Ry)} (Rx)_i(Ry)_i = \alpha\WSIP{x}{y}_{\infty,R}, \\
	\WSIP{-x}{-y}_{\infty,R} &= \max_{i \in I_{\infty}(-Ry)} (-Rx)_i(-Ry)_i = \WSIP{x}{y}_{\infty,R}.
	\end{align*}
	Regarding property~\ref{WSIP4} 
	\begin{align*}
	\WSIP{x}{x}_{\infty,R} &= \max_{i \in I_{\infty}(Rx)} (Rx)_i(Rx)_i = \max_{i \in I_{\infty}(Rx)} \|x\|_{\infty,R}^2 \\&= \|x\|_{\infty,R}^2 \geq 0.
	\end{align*}
	This also shows that this weak pairing is compatible with the norm. Finally, regarding property~\ref{WSIP5}:
	\begin{align*}
	&|\WSIP{x}{y}_{\infty,R}| = \left|\max_{i \in I_{\infty}(Ry)} (Rx)_i(Ry)_i\right| \\\quad &\leq \left|\max_{i \in I_{\infty}(Rx)} \|y\|_{\infty,R}\|x\|_{\infty,R}\right| = \WSIP{x}{x}_{\infty,R}^{1/2}\WSIP{y}{y}_{\infty,R}^{1/2}.
	\end{align*}
\end{proof}

We postpone to Appendix \ref{A:InftyProof} the proof of the next 
lemma.

\begin{lemma}[Deimling's inequality for the max pairing] \label{lemma:inftyProperties}
  The max pairing in Definition~\ref{def:MaxWSIP} satisfies Deimling's inequality,~\eqref{eq:DeimlingIneq}.
\end{lemma}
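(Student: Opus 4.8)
The plan is to prove that Deimling's inequality in fact holds with \emph{equality} for the max pairing, by reducing the right-hand one-sided limit to a maximum over the active index set and then invoking Danskin's lemma. Fix $x,y\in\real^n$ and set $w=Rx$, $u=Ry$, and $M=\norm{y}{\infty,R}=\norm{u}{\infty}$. If $y=\vectorzeros[n]$ both sides of the inequality vanish, so I assume $M>0$. Writing $g(h)=\norm{y+hx}{\infty,R}=\max_{i}|u_i+hw_i|$, the right-hand side of Deimling's inequality equals $M\cdot D^+g(0)$, so it suffices to show $M\cdot D^+g(0)=\WSIP{x}{y}_{\infty,R}=\max_{i\in\Iinfty(u)}u_iw_i$.

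The first step is to argue that only the active indices $\Iinfty(u)$ govern the limit as $h\to0^+$. I introduce $g_\infty(h)=\max_{i\in\Iinfty(u)}|u_i+hw_i|$ and set $C=\max_i|w_i|$. For each inactive index $j\notin\Iinfty(u)$ one has $\eta_j:=M-|u_j|>0$, hence $|u_j+hw_j|\le M-\eta_j+h|w_j|\le M-hC\le g_\infty(h)$ whenever $h\le \eta_j/(|w_j|+C)$, where the final bound uses $g_\infty(h)\ge M-hC$. Since there are only finitely many indices, there is a uniform $h_0>0$ such that $g(h)=g_\infty(h)$ on ${]0,h_0[}$, and therefore $D^+g(0)=D^+g_\infty(0)$.

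The second step computes $D^+g_\infty(0)$ using Danskin's lemma (Lemma~\ref{lemma:Danskin}). Each $i\in\Iinfty(u)$ satisfies $u_i\neq0$, so $h\mapsto|u_i+hw_i|$ is differentiable on a neighborhood of $h=0$ with derivative $\sign(u_i)w_i$ there, and every such $i$ attains the maximizing value $g_\infty(0)=M$. Danskin's lemma then yields $D^+g_\infty(0)=\max_{i\in\Iinfty(u)}\sign(u_i)w_i$. Finally, using $\sign(u_i)=u_i/M$ for $i\in\Iinfty(u)$, I conclude $M\cdot D^+g(0)=\max_{i\in\Iinfty(u)}u_iw_i=\WSIP{x}{y}_{\infty,R}$, which establishes Deimling's inequality with equality.

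The main obstacle is the first step: one must carefully justify that the inactive indices do not affect the one-sided limit and that the limit exists, because the individual functions $|u_i+hw_i|$ can fail to be differentiable precisely at the inactive indices where $u_i=0$. Restricting attention to $\Iinfty(u)$ resolves this, since there $u_i\neq0$ guarantees differentiability and lets Danskin's lemma apply exactly as stated; the uniform threshold $h_0$ is what makes the restriction legitimate in the limit.
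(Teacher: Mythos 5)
Your proof is correct, but it takes a genuinely different route from the paper's. The paper never touches the one-sided limit directly: it first constructs a family of \emph{single-index} Lumer pairings $[x,y]_{\infty,R}=(Rx)_{f(\Iinfty(Ry))}(Ry)_{f(\Iinfty(Ry))}$ indexed by choice functions $f$ (Lemma~\ref{lemma:singleindex}), observes that for each fixed pair $(x,y)$ the max pairing coincides with one of these single-index pairings (Corollary~\ref{cor:Maxrelationship}), and then invokes the general fact that every Lumer pairing compatible with a norm is dominated by the Deimling pairing (Lemma~\ref{SupInfPairings}). Your argument instead evaluates the limit for the weighted $\ell_\infty$ norm by hand: the reduction to the active indices via the uniform threshold $h_0$ is sound, the application of Danskin's lemma (Lemma~\ref{lemma:Danskin}) is legitimate because $u_i\neq 0$ for every $i\in\Iinfty(u)$ makes each $h\mapsto|u_i+hw_i|$ differentiable on a common open neighborhood of $h=0$, and the bookkeeping $\sign(u_i)=u_i/M$ is right. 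What your route buys is self-containedness and a strictly stronger conclusion: you show the max pairing \emph{equals} the Deimling pairing $(x,y)_+$ for $\norm{\cdot}{\infty,R}$, not merely that it is dominated by it, and you avoid both the single-index machinery and the external theorem on Lumer pairings. What the paper's route buys is brevity given that machinery, plus structural insight: the max pairing is exhibited as the upper envelope of a family of Lumer pairings, each of which obeys Deimling's inequality. One small caveat in your write-up: the threshold $\eta_j/(|w_j|+C)$ is undefined when $C=\max_i|w_i|=0$, i.e., when $x=\vectorzeros[n]$; you should dispose of that trivial case (both sides vanish and $g$ is constant) before introducing $C$, after which the argument goes through verbatim.
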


\begin{theorem}[Derivative of $\ell_\infty$ norm along a curve]
  Let $x: {]a, b[} \to \R^n$ be differentiable. Then for all $t \in {]a,b[},$
  \begin{enumerate}
  \item $\ds D^+\|x(t)\|_{\infty,R} = \max_{i \in I_{\infty}(Rx(t))}\sign((Rx(t))_i) (R\dot{x}(t))_i \\+ \chi_{\{\vectorzeros[n]\}}(Rx(t))\|\dot{x}(t)\|_{\infty,R}$, and 
  \item 
    $\ds\|x(t)\|_{\infty,R}D^+\|x(t)\|_{\infty,R} = \WSIP{\dot{x}(t)}{x(t)}_{\infty,R}.$
  \end{enumerate}
\end{theorem}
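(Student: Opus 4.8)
The plan is to reduce both claims to Danskin's lemma (Lemma~\ref{lemma:Danskin}). Writing $y(t) = Rx(t)$, which is differentiable because $x$ is and $R$ is constant, I would observe that
$$\|x(t)\|_{\infty,R} = \|y(t)\|_\infty = \max_{i \in \until{n}} |y_i(t)| = \max_{i \in \until{n}} \max\{y_i(t), -y_i(t)\},$$
so that $t \mapsto \|x(t)\|_{\infty,R}$ is the pointwise maximum of the $2n$ differentiable scalar functions $t \mapsto y_i(t)$ and $t \mapsto -y_i(t)$, $i \in \until{n}$. Danskin's lemma then expresses $D^+\|x(t)\|_{\infty,R}$ as the maximum of the derivatives $\dot{y}_i(t)$ and $-\dot{y}_i(t)$ taken over exactly those functions that attain the maximum value at time $t$.

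For part~(i), I would identify the active functions. A function $\pm y_i$ attains the value $\|y(t)\|_\infty$ precisely when $i \in \Iinfty(y(t))$ and the chosen sign agrees with $\sign(y_i(t))$. When $y(t) \neq \vectorzeros[n]$, each active index $i$ has $y_i(t) \neq 0$ with a definite sign, so the unique active function among $\{y_i, -y_i\}$ has derivative $\sign(y_i(t))\dot{y}_i(t) = \sign((Rx(t))_i)(R\dot{x}(t))_i$; Danskin's lemma then yields the first term of the claimed formula, and the indicator term vanishes since $Rx(t) \neq \vectorzeros[n]$. When $y(t) = \vectorzeros[n]$, every index lies in $\Iinfty(y(t)) = \until{n}$ and both $y_i$ and $-y_i$ are active, so Danskin's lemma gives $\max_i \{\dot{y}_i(t), -\dot{y}_i(t)\} = \max_i |\dot{y}_i(t)| = \|\dot{x}(t)\|_{\infty,R}$, which is exactly the indicator term, while the first term is zero because $\sign(\vectorzeros[n]) = \vectorzeros[n]$. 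Since Danskin's lemma applies at every $t$, this establishes (i) pointwise rather than merely almost everywhere.

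For part~(ii), I would multiply the identity in (i) by the nonnegative factor $\|x(t)\|_{\infty,R}$. When $Rx(t) \neq \vectorzeros[n]$ this factor is strictly positive and may be moved inside the maximum; for each active index $i \in \Iinfty(Rx(t))$ one has $(Rx(t))_i = \sign((Rx(t))_i)\,\|x(t)\|_{\infty,R}$, hence $\|x(t)\|_{\infty,R}\sign((Rx(t))_i)(R\dot{x}(t))_i = (R\dot{x}(t))_i (Rx(t))_i$, and the maximum becomes precisely $\WSIP{\dot{x}(t)}{x(t)}_{\infty,R}$. When $Rx(t) = \vectorzeros[n]$ both sides are zero: the left side because the factor $\|x(t)\|_{\infty,R}$ vanishes while $D^+\|x(t)\|_{\infty,R}$ is finite, and the right side because every entry $(R\dot{x}(t))_i (Rx(t))_i$ contains the vanishing factor $(Rx(t))_i$.

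The only delicate point is the non-differentiability locus $Rx(t) = \vectorzeros[n]$, where the $\ell_\infty$ norm fails to be smooth: there one must track carefully which of the $2n$ functions in the Danskin maximum are simultaneously active, and verify that the extra contribution $\|\dot{x}(t)\|_{\infty,R}$ is captured exactly by the indicator term in (i) and then absorbed to zero by the prefactor $\|x(t)\|_{\infty,R}$ in (ii). Away from this locus the sign of each active coordinate is locally constant and the computation is routine.
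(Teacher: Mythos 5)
Your proof is correct, and although it rests on the same key tool as the paper's proof---Danskin's lemma applied pointwise, with the same case split at the locus $Rx(t) = \vectorzeros[n]$---it uses a genuinely different decomposition. The paper writes $\|x(t)\|_{\infty,R}$ as the maximum over indices of the functions $t \mapsto |(Rx(t))_i|$, which are themselves non-differentiable; this forces it to invoke an informal extension of Danskin's lemma to constituent functions that are max functions (justified only by ``a simple argument shows'') and to combine it with the separate result on the Dini derivative of the absolute value, Lemma~\ref{lemma:absvalue}. You instead write the norm as the maximum of the $2n$ differentiable functions $t \mapsto \pm(Rx(t))_i$, so Danskin's lemma as stated in Lemma~\ref{lemma:Danskin} applies verbatim, with no extension and no auxiliary absolute-value lemma; the indicator term in part~(i) then emerges transparently from the fact that at $Rx(t) = \vectorzeros[n]$ all $2n$ functions are simultaneously active, while away from that locus exactly one member of each active pair $\{(Rx)_i, -(Rx)_i\}$ attains the maximum. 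Your route is therefore more self-contained and fills in precisely the step the paper leaves informal; the paper's route, by factoring through $D^+|(Rx(t))_i|$, keeps the computation per-index and reuses a lemma it needs anyway for the $\ell_1$ curve norm derivative formula. Both arguments conclude part~(ii) the same way, multiplying part~(i) by $\|x(t)\|_{\infty,R}$, converting $\sign((Rx(t))_i)\|x(t)\|_{\infty,R}$ into $(Rx(t))_i$ on active indices, and absorbing the indicator contribution to zero.
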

\begin{proof}
	From Danskin's lemma, Lemma~\ref{lemma:Danskin}, $f(t) = \max \{f_1(t), \dots, f_m(t)\}$ with differentiable $f_i$ satisfies $D^+f(t) = \max\{\frac{d}{dt}f_i(t)\; | \; f_i(t) =f(t)\}$. If the functions $f_i$ are max functions themselves (e.g., absolute values in our case), a simple argument shows
	$$D^+f(t) = \max\{D^+f_i(t)\; | \; f_i(t) = f(t)\}.$$
	By the definition of $\|\cdot\|_{\infty,R}$, we have
	\begin{align*}
	D^+\|x(t)\|_{\infty,R} = \max_{i \in I_{\infty}(Rx(t))}D^+|(Rx(t))_i|.
	\end{align*}
	Then by using the property for Dini derivatives of the absolute value function as in Lemma \ref{lemma:absvalue},
	\begin{align*}
	D^+\|x(t)\|_{\infty,R} &= \max_{i \in I_{\infty}(Rx(t))} \sign((Rx(t))_i)(R\dot{x}(t))_i \\& \qquad + |(R\dot{x}(t))_i|\chi_{\{0\}}(Rx(t))_i \\
	&= \max_{i \in I_{\infty}(Rx(t))}\sign((Rx(t))_i) (R\dot{x}(t))_i \\& \qquad+ \chi_{\{\vectorzeros[n]\}}(Rx(t))\|\dot{x}(t)\|_{\infty,R}.
	\end{align*}
	This proves the first result. To get the second result, multiply both sides by
	$\|x(t)\|_{\infty,R}$ to get
	\begin{align*}
	\|x(t)\|_{\infty,R}&D^+\|x(t)\|_{\infty,R} = \WSIP{\dot{x}(t)}{x(t)}_{\infty,R} \\&\quad + \|x(t)\|_{\infty,R}\chi_{\{\vectorzeros[n]\}}(Rx(t))\|\dot{x}(t)\|_{\infty,R}.
	\end{align*}
	Note that this second term is identically zero since if $\chi_{\{\vectorzeros[n]\}}(Rx(t)) = 1,$ then $\|x(t)\|_{\infty,R} = 0$.
\end{proof}
Weak pairings, known expressions for \textcolor{black}{log norms}, and novel expressions for \textcolor{black}{log norms} from Lumer's equality for $\ell_p$~norms are summarized in Table~\ref{table:equivalences}.

\section{Contraction theory via weak pairings}\label{sec:contraction}
\subsection{One-sided Lipschitz functions}
\begin{definition}[One-sided Lipschitz function]
	Let $\map{f}{C}{\real^n}$, where $C \subseteq \real^n$ is open and connected. We say $f$ is \emph{one-sided Lipschitz with respect to a weak pairing $\WSIP{\cdot}{\cdot}$} if the weak pairing satisfies Deimling's inequality,~\eqref{eq:DeimlingIneq}, and there exists $b \in \real$ such that
	\begin{equation}
	\WSIP{f(x) - f(y)}{x - y} \leq b\|x - y\|^2 \quad \text{for all } x,y \in C.
	\end{equation}
	We say $b$ is a \emph{one-sided Lipschitz constant of $f$}. Moreover, the minimal one-sided Lipschitz constant of $f$, $\osL(f)$, is
	\begin{equation}
	\osL(f) := \sup_{x \neq y} \frac{\WSIP{f(x) - f(y)}{x - y}}{\|x - y\|^2} \in \real \cup \{\infty\}.
	\end{equation}
\end{definition}

We prove the following proposition in Appendix~\ref{app:osLProp}.
\begin{proposition}[Properties of $\osL(f)$]\label{prop:osLProperties}
	Let $\map{f,g}{C}{\real^n}$ be one-sided Lipschitz with respect to a weak pairing $\WSIP{\cdot}{\cdot}$. Then 
	for $c \in \real$ and $\textcolor{black}{\map{\id}{C}{C}}$ the identity map:
	\begin{enumerate}
		\item\label{osL1} $\osL(f) \leq \sup_{x \neq y} \frac{\|f(x) - f(y)\|}{\|x - y\|}$,
		\item\label{osL2} $\osL(f + c\id) = \osL(f) + c$,
		\item\label{osL3} $\osL(\alpha f) = \alpha \osL(f), \quad$ for all $\alpha \geq 0$,
		\item\label{osL4} $\osL(f + g) \leq \osL(f) + \osL(g)$.
	\end{enumerate}
\end{proposition}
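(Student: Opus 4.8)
The plan is to dispatch (i), (iii), and (iv) by direct appeal to the weak pairing axioms, and to isolate a single affine identity as the one genuinely delicate point, which is needed only for (ii).

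For (i), I would apply the Cauchy--Schwarz inequality (property~\ref{WSIP5}) to the numerator: since $\WSIP{f(x)-f(y)}{x-y} \le \WSIP{f(x)-f(y)}{f(x)-f(y)}^{1/2}\WSIP{x-y}{x-y}^{1/2} = \|f(x)-f(y)\|\,\|x-y\|$, dividing by $\|x-y\|^2$ and taking the supremum over $x\neq y$ gives the claimed bound. For (iii), weak homogeneity (property~\ref{WSIP3}) yields $\WSIP{\alpha(f(x)-f(y))}{x-y} = \alpha\WSIP{f(x)-f(y)}{x-y}$ for $\alpha\ge0$, and a nonnegative constant factors out of the supremum. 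For (iv), subadditivity in the first argument (property~\ref{WSIP1}) bounds the numerator of $\osL(f+g)$ by the sum of the two individual numerators, and the supremum of a sum is at most the sum of the suprema; both steps are routine.

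The crux is (ii), and here the plan is to first establish the affine identity
\[
\WSIP{u + c\,w}{w} = \WSIP{u}{w} + c\,\|w\|^2, \qquad u,w \in \real^n,\; c \in \real .
\]
Granting it with $u = f(x)-f(y)$ and $w = x-y$, the fraction defining $\osL(f+cI)$ becomes $\tfrac{\WSIP{f(x)-f(y)}{x-y}}{\|x-y\|^2} + c$, whose supremum over $x\neq y$ is $\osL(f)+c$. The obstacle is that the axioms alone give only the one-sided inequality $\WSIP{u+cw}{w}\le\WSIP{u}{w}+c\|w\|^2$ (for $c\ge0$, via subadditivity and homogeneity), so the real work is upgrading this to an equality. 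The key auxiliary fact is $\WSIP{-w}{w}=-\|w\|^2$, which I would obtain by sandwiching: Deimling's inequality evaluated in the direction $-w$ gives $\WSIP{-w}{w}\le\|w\|\lim_{h\to0^+}\tfrac{\|w-hw\|-\|w\|}{h}=-\|w\|^2$, while Cauchy--Schwarz gives $\WSIP{-w}{w}\ge-\|w\|^2$.

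With $\WSIP{-w}{w}=-\|w\|^2$ in hand, for $c\ge0$ the missing lower bound follows by writing $\WSIP{u}{w}\le\WSIP{u+cw}{w}+\WSIP{-cw}{w}$ and using $\WSIP{-cw}{w}=c\WSIP{-w}{w}=-c\|w\|^2$, which together with the subadditive upper bound forces equality; the case $c<0$ then reduces to the case $c\ge0$ by the substitution $u\mapsto u+cw$. I expect this upgrade from subadditivity to equality, i.e.\ the use of Deimling's inequality to pin down $\WSIP{-w}{w}$, to be the main obstacle, since it is the sole place where the weak pairing axioms do not suffice and the Deimling hypothesis built into one-sided Lipschitzness is indispensable.
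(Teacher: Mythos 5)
Your proof is correct, and its skeleton matches the paper's: parts (i), (iii), (iv) are dispatched directly from the Cauchy--Schwarz, weak homogeneity, and subadditivity axioms, and part (ii) is reduced to the affine identity $\WSIP{u+cw}{w}=\WSIP{u}{w}+c\|w\|^2$, whose crux is the value $\WSIP{-w}{w}=-\|w\|^2$. The difference is local, in how that crux is established. The paper proves it as Lemma~\ref{cor:negativeLumer}: the upper bound $\WSIP{-x}{x}\le-\|x\|^2$ comes from invoking Lumer's equality for weak pairings (Theorem~\ref{theorem:lumer}) with $A=-I_n$, i.e.\ from $\mu(-I_n)=-1$, and the lower bound from the subadditivity trick $\WSIP{x}{x}=\WSIP{x-x+x}{x}\le\WSIP{-x}{x}+2\WSIP{x}{x}$. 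You instead obtain the upper bound by evaluating Deimling's inequality directly at the pair $(-w,w)$, where the limit $\lim_{h\to0^+}\bigl(\|w-hw\|-\|w\|\bigr)/h=-\|w\|$ is elementary, and the lower bound from Cauchy--Schwarz together with $\WSIP{-w}{-w}=\|w\|^2$. Your route is more self-contained: it never touches matrix measures or the full statement of Theorem~\ref{theorem:lumer}, only the pairing axioms plus the Deimling hypothesis that the one-sided Lipschitz definition already builds in; the paper's route instead reuses machinery it has already established, which makes its appendix proof shorter on the page. A final cosmetic difference: the paper's lemma handles all $c\in\real$ at once (reducing to $c=-1$ by weak homogeneity), whereas you prove the identity for $c\ge0$ and recover $c<0$ via the substitution $u\mapsto u+cw$; both bookkeeping choices are valid.
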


%\begin{remark}
%	In~\cite{GS:06}, the least upper bound logarithmic Lipschitz constant is defined and it is shown to satisfy the same properties that the minimal one-sided Lipschitz constant does. However, the logarithmic Lipschitz constant is only defined for globally Lipschitz continuous maps while the one-sided Lipschitz constant may be defined regardless. For example, for $f(x) = -x^3$, the logarithmic Lipschitz constant is not defined while $\osL(f) = 0$.
%\end{remark}

\begin{remark}
	When $\map{f}{C}{\real^n}$ is continuously differentiable and $C$ is convex, $\osL(f)$ does not depend on the choice of weak pairing and instead depends only on the norm since
	\begin{equation*}
	\sup_{x \neq y} \frac{\WSIP{f(x) - f(y)}{x - y}}{\|x - y\|^2} = \sup_{x \in C} \mu(\jac{f}(x)),
	\end{equation*}
	which follows from the mean-value theorem for vector-valued functions in conjunction with Lumer's equality. \oprocend
\end{remark}

%\begin{example}
%	We consider an affine function $f(x) = Ax + b$. Then the minimal one-sided Lipschitz constant of $f$ is
%	\begin{align*}
%	\osL(f) = \sup_{x \neq y} \frac{\WSIP{A(x-y)}{x - y}}{\|x - y\|^2} = \sup_{z \neq \vectorzeros[n]} \frac{\WSIP{Az}{z}}{\|z\|^2} = \mu(A),
%	\end{align*}
%	which holds by Lumer's equality, Theorem~\ref{theorem:lumer}. \oprocend
%\end{example}
%In this section we establish contraction equivalences for continuously
%differentiable and for continuous vector fields. Additionally, we provide
%equivalences for contraction with respect to an equilibrium.

\subsection{Contraction equivalences for continuously differentiable vector fields} 
\begin{theorem}[Contraction equivalences for continuously differentiable vector fields]
  \label{thm:general}
  Consider the dynamics $\dot{x} = f(t,x)$, with $f$ continuously
  differentiable in $x$ and continuous in $t$. Let $C \subseteq \real^n$ be
  open, convex, and forward invariant and let $\|\cdot\|$ denote a norm with
  compatible weak pairing $\WSIP{\cdot}{\cdot}$ satisfying Deimling's
  inequality,~\eqref{eq:DeimlingIneq}. Then, for $b \in \real$, the following statements are
  equivalent:
  \begin{enumerate}
  \item\label{ctGen:5} %$\WSIP{f(t,x) - f(t,y)}{x - y} \leq b\|x - y\|^2$, for all $x,y \in C, t \geq 0,$
  $\osL(f(t,\cdot)) \leq b$ with respect to the weak pairing $\WSIP{\cdot}{\cdot}$, for all $t \geq 0$,
  \item\label{ctGen:4} $\WSIP{\jac{f}(t,x)v}{v} \leq b\|v\|^2$, for all $v \in \R^n, x \in C, t \geq 0$,
  \item\label{ctGen:2} $\mu(\jac{f}(t,x)) \leq b$, for all $x \in C, t \geq 0$, 
  \item\label{ctGen:6} $D^+\|\phi(t,t_0,x_0) - \phi(t,t_0,y_0)\| \leq b\|\phi(t,t_0,x_0) - \phi(t,t_0,y_0)\|$, for all $x_0,y_0 \in C, 0 \leq t_0 \leq t$ for which the solutions exist,
  \item\label{ctGen:1} $\|\phi(t,t_0,x_0) - \phi(t,t_0,y_0)\| \leq e^{b(t-s)}\|\phi(s,t_0,x_0) - \phi(s,t_0,y_0)\|$, for all $x_0, y_0 \in C$ and $0 \leq t_0 \leq s \leq t$ for which the solutions exist.
	\end{enumerate}
\end{theorem}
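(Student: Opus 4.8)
The plan is to prove a cycle of implications (i)$\Rightarrow$(ii)$\Rightarrow$(iii)$\Rightarrow$(iv)$\Rightarrow$(i) among the first four statements, and then close the proof with the separate equivalence (iv)$\Leftrightarrow$(v). The link (ii)$\Leftrightarrow$(iii) is essentially free: Lumer's equality for weak pairings (Theorem~\ref{theorem:lumer}) gives $\mu(\jac{f}(t,x)) = \sup_{v\neq\vectorzeros[n]} \WSIP{\jac{f}(t,x)v}{v}/\|v\|^2$, so the pointwise bound $\mu(\jac{f}(t,x))\le b$ is literally the statement $\WSIP{\jac{f}(t,x)v}{v}\le b\|v\|^2$ for all $v$. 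For (i)$\Rightarrow$(ii), I would fix $x\in C$ and $v\in\real^n$ and apply the one-sided Lipschitz bound to the pair $x$ and $y=x+hv$, which lies in $C$ for small $h>0$ since $C$ is open. Pulling out the factor $h$ via weak homogeneity in the second argument and writing $f(t,x+hv)-f(t,x)=h\big(\jac{f}(t,x)v+r(h)\big)$ with $r(h)\to\vectorzeros[n]$ reduces the inequality to $\WSIP{\jac{f}(t,x)v+r(h)}{v}\le b\|v\|^2$; letting $h\to 0^+$ and using continuity of the weak pairing in its first argument yields (ii).

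For (iii)$\Rightarrow$(iv), I would fix two solutions $x(\cdot)=\phi(\cdot,t_0,x_0)$, $y(\cdot)=\phi(\cdot,t_0,y_0)$ and set $z=x-y$. Convexity of $C$ and the fundamental theorem of calculus give $\dot{z}(t)=\bar{A}(t)z(t)$ with $\bar{A}(t)=\int_0^1 \jac{f}(t,y(t)+sz(t))\,ds$ continuous in $t$. The crucial observation is that $\mu$ is a convex (indeed sublinear) functional, which follows again from Lumer's equality since $A\mapsto\WSIP{Av}{v}$ is sublinear for each fixed $v$ and a supremum of sublinear functions is sublinear. Jensen's inequality then gives $\mu(\bar{A}(t))\le\int_0^1\mu(\jac{f}(t,y(t)+sz(t)))\,ds\le b$, and Coppel's differential inequality (Lemma~\ref{Coppel}) applied to the linear time-varying system $\dot{z}=\bar{A}(t)z$ delivers $D^+\|z(t)\|\le\mu(\bar{A}(t))\|z(t)\|\le b\|z(t)\|$, which is exactly (iv).

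For (iv)$\Rightarrow$(i), I would evaluate (iv) at the initial time $t=t_0$, where $z(t_0)=x_0-y_0$ and $\dot{z}(t_0)=f(t_0,x_0)-f(t_0,y_0)$. A Taylor expansion shows the curve's Dini derivative $D^+\|z(t)\|$ at $t_0$ coincides with the one-sided directional derivative $\lim_{h\to 0^+}\big(\|z(t_0)+h\dot{z}(t_0)\|-\|z(t_0)\|\big)/h$, the limit existing by convexity of the norm. Deimling's inequality then gives $\WSIP{f(t_0,x_0)-f(t_0,y_0)}{x_0-y_0}\le\|z(t_0)\|\,D^+\|z(t)\|\big|_{t_0}\le b\|x_0-y_0\|^2$, and since $t_0,x_0,y_0$ are arbitrary this is precisely $\osL(f(t_0,\cdot))\le b$, i.e.\ (i). Finally, (iv)$\Rightarrow$(v) is the nonsmooth \GB inequality (Lemma~\ref{lemma:nonsmooth-GB}) with $m\equiv b$ and $r\equiv 0$ applied to $\varphi(t)=\|\phi(t,t_0,x_0)-\phi(t,t_0,y_0)\|$, while (v)$\Rightarrow$(iv) follows by dividing $\varphi(t)\le e^{b(t-s)}\varphi(s)$ by $t-s$ and taking $\limsup$ as $t\to s^+$.

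I expect the main obstacle to be (iii)$\Rightarrow$(iv): the argument hinges on recognizing that the matrix measure is convex and that it is the \emph{averaged} Jacobian $\bar{A}(t)$, not any pointwise Jacobian, that governs the evolution of $\|z(t)\|$, so that Jensen's inequality and Coppel's inequality combine without loss. The secondary delicate point is in (iv)$\Rightarrow$(i), where one must carefully identify the curve's upper Dini derivative with the one-sided G\^ateaux derivative of the norm (controlling the $o(h)$ terms) so that Deimling's inequality can be invoked in the correct direction; note that only Deimling's inequality, and not the stronger curve norm derivative formula, is available under the hypotheses.
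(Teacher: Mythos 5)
Your proof is correct, and every hard step uses the same tools as the paper's own proof: difference quotients with weak homogeneity and first-argument continuity for (i)$\Rightarrow$(ii), Lumer's equality (Theorem~\ref{theorem:lumer}) to pass between the Demidovich condition and the matrix-measure bound, the mean-value theorem with subadditivity of $\mu$ and Coppel's inequality (Lemma~\ref{Coppel}) for (iii)$\Rightarrow$(iv), the nonsmooth \GB inequality (Lemma~\ref{lemma:nonsmooth-GB}) for (iv)$\Rightarrow$(v), and a first-order expansion of the flow plus Deimling's inequality to recover the one-sided Lipschitz condition. What differs is only the implication graph. The paper proves the single cycle (i)$\Rightarrow$(ii)$\Rightarrow$(iii)$\Rightarrow$(iv)$\Rightarrow$(v)$\Rightarrow$(i), returning to (i) from the \emph{integrated} estimate (v) by comparing $\|x_0-y_0+h(f(t_0,x_0)-f(t_0,y_0))\|+O(h^2)$ against $e^{bh}\|x_0-y_0\|$ and invoking Deimling's inequality after letting $h\to0^+$. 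You instead close a four-cycle by returning to (i) from the \emph{differential} estimate (iv) --- the same Taylor-plus-Deimling argument evaluated at $t=t_0$, with the bound $b\|x_0-y_0\|$ supplied by the Dini inequality rather than by $\lim_{h\to0^+}(e^{bh}-1)/h$ --- and then attach (v) through the two-way link (iv)$\Leftrightarrow$(v), whose converse direction (v)$\Rightarrow$(iv) is the elementary fact that the exponential estimate differentiates back to the Dini inequality. Your arrangement is slightly more modular (the differential/integrated equivalence for trajectories stands on its own, and you correctly observe that Lumer's equality gives (ii)$\Leftrightarrow$(iii) in both directions at once, whereas the paper's cycle only needs one direction), at the cost of proving more implications than the minimal five. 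Both versions rest on the two delicate points you flagged: it is the averaged Jacobian $\bar A(t)$, not a pointwise one, that governs $\dot z$, so subadditivity (Jensen) of the matrix measure is indispensable in (iii)$\Rightarrow$(iv); and since only Deimling's inequality is assumed (not the curve norm derivative formula), the return to (i) must pass through the one-sided directional derivative of the norm, exactly as you do.
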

\begin{proof}
	Regarding \ref{ctGen:5} $\implies$ \ref{ctGen:4}, if $v = \vectorzeros[n]$, the result is trivial. By definition of $\osL(f(t,\cdot))$, $\WSIP{f(t,x) -
		f(t,y)}{x-y} \leq b\|x - y\|^2$, for all $x, y \in C, t \geq 0$. Fix $y \neq x$
	and set $x = y + hv$ for an arbitrary $v \in \R^n$ and $h \in
	\realpositive$ sufficiently small. Then
	\begin{align*}
	\WSIP{f(t,y + hv) - f(t,y)}{hv} &\leq b\|hv\|^2 \\ \implies \quad h\WSIP{f(t,y + hv) - f(t,y)}{v} &\leq bh^2\|v\|^2,
	\end{align*}
	by the weak homogeneity of the weak pairing, property
	\ref{WSIP3}. Dividing by $h^2$ and taking the limit as $h$ goes to zero
	yields
	\begin{align*}
	\lim_{h \to 0^+} \WSIP{\frac{f(t,y + hv) - f(t,y)}{h}}{v} &\leq b\|v\|^2 \\  \implies \quad \WSIP{\jac{f}(t,y)v}{v} &\leq b\|v\|^2,
	\end{align*}
	which follows from the continuity of the weak pairing in its first
	argument, property \ref{WSIP1}. Since $y$, $v$, and $t$ were arbitrary,
	this completes the implication.
  %Regarding \ref{ctGen:2} $\implies$ \ref{ctGen:4}, suppose
  %$\mu(\jac{f}(t,x)) \leq b$ for all $x \in C, t \geq 0$. The result is
  %trivial for $v = \vectorzeros[n]$, so assume $v \neq
  %\vectorzeros[n]$. Then by Lumer's equality, Theorem~\ref{theorem:lumer},
  %\begin{equation*}
  %  \begin{aligned}
  %    \mu(\jac{f}(t,x)) &= \sup_{v\neq \vectorzeros[n]} \frac{\WSIP{\jac{f}(t,x)v}{v}}{\|v\|^2} \leq b \\
  %    &\implies \WSIP{\jac{f}(t,x)v}{v} \leq b\|v\|^2, \quad \text{for all } v \in \real^n.
  %  \end{aligned}
  %\end{equation*}  
  
  Regarding \ref{ctGen:4} $\implies$ \ref{ctGen:2}, suppose
  $\WSIP{\jac{f}(t,x)v}{v} \leq b\|v\|^2$ for all $x \in C, v \in \real^n,
  t \geq 0$. Let $v \neq \vectorzeros[n]$ and divide by $\|v\|^2$. Then
  take the $\sup$ over all $v \neq \vectorzeros[n]$ to get
  $\mu(\jac{f}(t,x)) = \sup_{v\neq \vectorzeros[n]}
  \WSIP{\jac{f}(t,x)v}{v}/\|v\|^2 \leq b$, by Lumer's equality.

  %Regarding \ref{ctGen:4} $\implies$ \ref{ctGen:5}, suppose
  %$\WSIP{\jac{f}(t,z)v}{v} \leq b\|v\|^2$, for all $v \in \real^n, z \in C,
  %t \geq 0$. Then let $x, y \in C$ and $v = x - y$. By the mean-value
  %theorem for vector-valued functions,
  %$$f(t,x) - f(t,y) = \left(\int_0^1 \jac{f}(t,y + sv)ds\right)(x - y).$$
  %Hence,
  %\begin{align*}
  %  &\WSIP{f(t,x) - f(t,y)}{x-y} \\ & \qquad\qquad = \WSIP{\left(\int_0^1 \jac{f}(t,y + sv)ds\right)(x - y)}{x - y} \\
  %  &\qquad\qquad \leq \int_{0}^1 \WSIP{\jac{f}(t,y + sv)(x - y)}{x -y}ds \\
  %  &\qquad\qquad  \leq \int_0^1 b\|x - y\|^2ds = b\|x - y\|^2,
  %\end{align*}
  %where the first inequality follows from subadditivity and continuity in
  %the first argument of the weak pairing, property~\ref{WSIP1}.

  Regarding \ref{ctGen:2} $\implies$ \ref{ctGen:6}, define $x(t) =
  \phi(t,t_0,x_0), y(t) = \phi(t,t_0,y_0)$, and $v(t) = x(t) - y(t)$ for $x_0,y_0
  \in C, t_0 \geq 0$. Then by an application of the mean-value theorem for vector-valued functions,
  $$\dot{v} = \left(\int_{0}^1 \jac{f}(t,y + sv)ds\right)v.$$
  By an application of Coppel's differential inequality, Lemma~\ref{Coppel}, we have
  \begin{align*}
    D^+\|v(t)\| &\leq \mu\left(\int_{0}^1 \jac{f}(t,y(t) + sv(t))ds\right)\|v(t)\|\\ &\leq \int_{0}^1 \mu(\jac{f}(t,y(t) + sv(t)))ds \|v(t)\| \leq b\|v(t)\|,
  \end{align*}
  which follows from the subadditivity of log norms,~\cite{CAD-HH:72}. 
  Substituting back gives the inequality.

  Regarding \ref{ctGen:6} $\implies$ \ref{ctGen:1}; this follows from an
  application of the nonsmooth \GB inequality,
  Lemma~\ref{lemma:nonsmooth-GB} on the interval $[s,t] \subseteq [t_0,t]$.
  
  Regarding \ref{ctGen:1} $\implies$ \ref{ctGen:5}, let $x_0,y_0 \in C$,
  $t_0 \geq 0$ be arbitrary. Then for $h \geq 0$,
  \begin{align*}
    &\|\phi(t_0 + h,t_0,x_0) - \phi(t_0 + h,t_0,y_0)\| \\&\qquad = \|x_0 - y_0 + h(f(t_0,x_0) - f(t_0,y_0))\| + O(h^2) \\
    &\qquad \leq e^{bh}\|x_0 - y_0\|.
  \end{align*}
  Subtracting $\|x_0 - y_0\|$ on both sides, dividing by $h > 0$ and taking the limit as $h \to 0^+$, we get
  \begin{align*}
    &\lim_{h \to 0^+} \frac{\|x_0 - y_0 + h(f(t_0,x_0) - f(t_0,y_0))\| - \|x_0 - y_0\|}{h} \\ &\qquad \leq \lim_{h \to 0^+} \frac{e^{bh} - 1}{h}\|x_0 - y_0\|.
  \end{align*}
  Evaluating the right hand side limit gives
  \begin{align}
    &\lim_{h \to 0^+} \frac{\|x_0 - y_0 + h(f(t_0,x_0) - f(t_0,y_0))\| - \|x_0 - y_0\|}{h} \nonumber \\&\qquad \leq b\|x_0 - y_0\|. \label{eq:manipulation}
  \end{align}
  But by the assumption of Deimling's inequality,
  \begin{align*}
    &\WSIP{f(t_0,x_0) - f(t_0,y_0)}{x_0-y_0} \\ &\leq \|x_0 - y_0\|\lim_{h \to 0^+} \tfrac{\|x_0 - y_0 + h(f(t_0,x_0) - f(t_0,y_0))\| - \|x_0 - y_0\|}{h}.
  \end{align*}
  Then multiplying both sides of \eqref{eq:manipulation} by $\|x_0 - y_0\|$ gives
  $$\WSIP{f(t_0,x_0) - f(t_0,y_0)}{x_0-y_0} \leq b\|x_0 - y_0\|^2.$$
  Since $t_0, x_0$ and $y_0$ were arbitrary, the result holds.
  
\end{proof}
\begin{remark}
  \begin{enumerate}[nosep]
  \item A vector field $f$ satisfying conditions~\ref{ctGen:5},
    \ref{ctGen:4} or~\ref{ctGen:2} with $b<0$ is said to be \emph{strongly
      contracting with rate $|b|$}, see \cite{WL-JJES:98}.
    Condition~\ref{ctGen:4} is referred to as the \emph{Demidovich
      condition}, see \cite{AP-AP-NVDW-HN:04}.  A system whose trajectories
    satisfy conditions~\ref{ctGen:6} or~\ref{ctGen:1} with $b<0$ is said to
    be \emph{incrementally exponentially stable \textcolor{black}{without
        overshoot}}, see \cite{DA:02}.

  \item Theorem \ref{thm:general} holds for any choice of weak pairing
    satisfying Deimling's inequality,~\eqref{eq:DeimlingIneq}, (but not necessarily the curve norm
    derivative formula). Moreover, if a weak pairing does not satisfy
    Deimling's inequality, condition~\ref{ctGen:4} still
    implies~\ref{ctGen:1} since $\sup_{\|x\| = 1}\WSIP{Ax}{x} \geq \mu(A)$
    from Theorem~\ref{theorem:lumer}.
  %\item If $C$ is not convex, but rather is connected and the weak pairing 
  %	additionally satisfies the curve norm derivative formula, then 
  %	conditions~\ref{ctGen:5},~\ref{ctGen:6}, and~\ref{ctGen:1} remain
  %	equivalent.  
    \oprocend
    
    %\item \textcolor{cyan}{In~\cite[Proposition~3]{ZA-EDS:14b}, a similar result with only 4
    %  statements is proved for the specific case when the weak pairing is
    %  the Deimling pairing. Hence, our result is more general and shows
    %  that we have, generally, several choices of weak pairing that we may
    %  check in order to test for contraction. For example, the condition
    %  for weak contraction ($b = 0$) in flow networks~\cite[Lemma~1,
    %    Equation~27]{GC-EL-KS:15} agrees with the one-sided Lipschitz
    %  condition given by the sign pairing, but differs from the equivalent
    %  condition given by the Deimling pairing in
    %  equation~\eqref{eq:ell1Deimling}.}  
    
  %\item Statement \ref{ctGen:1} implies the uniqueness of solutions. This
    %result is known for the $\ell_2$ norm, \cite[Lemma 12.1]{EH-GW:96},
    %because the one-sided Lipschitz condition is a special case of
    %statement \ref{ctGen:5} in the theorem, but our result generalizes this
    %uniqueness result to arbitrary norms. \oprocend
  \end{enumerate}  
\end{remark}

Contraction equivalences~\ref{ctGen:5},~\ref{ctGen:4}, and~\ref{ctGen:2}
are transcribed for the $\ell_p$ norms in Table~\ref{table:Demidovich} for
the choices of weak pairings given in the previous section.

\subsection{Contraction equivalences for continuous vector fields}
\begin{theorem}[Contraction equivalences for continuous vector fields]\label{thm:generalization}
  Consider the dynamics $\dot{x} = f(t,x),$ with $f$ continuous in $(t,x)$.
  Let $C
  \subseteq \real^n$ be open, connected, and forward invariant and let $\|\cdot\|$ denote a norm with compatible weak pairing
  $\WSIP{\cdot}{\cdot}$ satisfying
  \textcolor{black}{Deimling's inequality,~\eqref{eq:DeimlingIneq}, and the curve norm derivative formula,~\eqref{eq:curvenormderivative}}. Then, for $b \in \real$, the following statements are equivalent:
  \begin{enumerate}
  \item\label{DoNF1} %$\WSIP{f(t,x) - f(t,y)}{x - y} \leq b\|x - y\|^2$, for all $x,y \in C, t \geq 0,$
  $\osL(f(t,\cdot)) \leq b$ with respect to the weak pairing $\WSIP{\cdot}{\cdot}$, for all $t \geq 0$,
  \item\label{DoNF2} $D^+\|\phi(t,t_0,x_0) - \phi(t,t_0,y_0)\| \leq b\|\phi(t,t_0,x_0) - \phi(t,t_0,y_0)\|$, for all $x_0,y_0 \in C, 0 \leq t_0 \leq t$ for which the solutions exist,
  \item\label{DoNF3} $\|\phi(t,t_0,x_0) - \phi(t,t_0,y_0)\| \leq e^{b(t-s)}\|\phi(s,t_0,x_0) - \phi(s,t_0,y_0)\|$, for all $x_0, y_0 \in C$ and $0 \leq t_0 \leq s \leq t$ for which the solutions exist.
  \end{enumerate}
  Moreover, if statements~\ref{DoNF1},~\ref{DoNF2}, and~\ref{DoNF3} hold,
  then solutions are unique. Finally, if
  statements~\ref{DoNF1},~\ref{DoNF2}, and~\ref{DoNF3} hold with $b<0$ and
  there exists $x^*\in\real^n$ such that $f(t,x^*)=0$ for all $t\geq0$,
  then solutions exist uniquely for all time $t\geq0$.
\end{theorem}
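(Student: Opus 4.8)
The plan is to prove the three conditions equivalent through the cycle \ref{DoNF1} $\Rightarrow$ \ref{DoNF2} $\Rightarrow$ \ref{DoNF3} $\Rightarrow$ \ref{DoNF1}, and then to obtain uniqueness and global existence as direct consequences of the trajectory estimate in~\ref{DoNF3}. The essential difference from Theorem~\ref{thm:general} is that, since $f$ is only continuous, the Jacobian-based characterizations (the Demidovich and matrix measure conditions, which there relied on Coppel's differential inequality and on Lumer's equality applied pointwise to $\jac{f}$) are no longer available. The curve norm derivative formula will take over the role previously played by Coppel's inequality, which is precisely why the hypotheses here strengthen Deimling's inequality to the curve norm derivative formula.

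For \ref{DoNF1} $\Rightarrow$ \ref{DoNF2}, I would fix two solutions $x(\cdot), y(\cdot)$ and set $w = x - y$. Because $f$ is continuous, each solution is $C^1$, so $w$ is differentiable with $\dot w(t) = f(t,x(t)) - f(t,y(t))$. Applying the curve norm derivative formula to $w$ gives $\|w(t)\| D^+\|w(t)\| = \WSIP{\dot w(t)}{w(t)}$ for almost every $t$, and substituting the one-sided Lipschitz bound~\ref{DoNF1} yields $\|w(t)\| D^+\|w(t)\| \leq b\|w(t)\|^2$. At instants where $\|w(t)\| > 0$ I divide by $\|w(t)\|$ to get $D^+\|w(t)\| \leq b\|w(t)\|$; at instants where $w(t) = \vectorzeros[n]$ the identity $\dot w(t) = \vectorzeros[n]$ forces $D^+\|w(t)\| = 0 = b\|w(t)\|$ via a first-order Taylor estimate, so the inequality holds throughout. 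The step \ref{DoNF2} $\Rightarrow$ \ref{DoNF3} is then immediate from the nonsmooth \GB inequality, Lemma~\ref{lemma:nonsmooth-GB}, applied with $\varphi = \|w\|$, $m \equiv b$, and $r \equiv 0$ on the subinterval $[s,t]$.

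The reverse step \ref{DoNF3} $\Rightarrow$ \ref{DoNF1} mirrors \ref{ctGen:1} $\Rightarrow$ \ref{ctGen:5} of Theorem~\ref{thm:general}: for fixed $x_0,y_0 \in C$ and $t_0 \geq 0$, Peano existence furnishes short-time solutions, and expanding $\phi(t_0+h,t_0,x_0) = x_0 + h f(t_0,x_0) + O(h^2)$ in~\ref{DoNF3} with $s = t_0$ gives, after subtracting $\|x_0-y_0\|$, dividing by $h$, and letting $h \to 0^+$, the bound $\lim_{h\to 0^+}(\|x_0 - y_0 + h(f(t_0,x_0)-f(t_0,y_0))\| - \|x_0-y_0\|)/h \leq b\|x_0-y_0\|$; Deimling's inequality converts the left-hand limit into $\WSIP{f(t_0,x_0)-f(t_0,y_0)}{x_0-y_0}$, delivering~\ref{DoNF1}. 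Uniqueness then follows by applying the chain \ref{DoNF1} $\Rightarrow$ \ref{DoNF2} $\Rightarrow$ \ref{DoNF3} to any two solutions sharing an initial condition: the estimate in~\ref{DoNF3} with $\|x_0 - y_0\| = 0$ forces them to coincide.

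Finally, for global existence when $b < 0$ and $f(t,x^*) = 0$, I would use that the constant curve $t \mapsto x^*$ is itself a solution, so comparing an arbitrary solution $x(\cdot)$ started at $x_0$ against it via~\ref{DoNF3} gives $\|x(t) - x^*\| \leq e^{b(t-t_0)}\|x_0 - x^*\| \leq \|x_0 - x^*\|$ for all $t \geq t_0$. Hence every maximal solution stays in the closed ball of radius $\|x_0 - x^*\|$ about $x^*$, and by forward invariance it stays in $C$; since $f$ is continuous and therefore bounded on this compact set, no finite escape time is possible, and the standard continuation argument yields existence on all of $[t_0,\infty)$. I expect the main obstacle to be the \ref{DoNF1} $\Rightarrow$ \ref{DoNF2} step: without differentiability of $f$ one cannot route through the Jacobian, so the whole argument rests on having the curve norm derivative formula as an \emph{equality} (Deimling's inequality alone only bounds $\WSIP{\dot w}{w}$ above by $\|w\| D^+\|w\|$, the wrong direction), together with the careful handling of the instants at which $w$ vanishes.
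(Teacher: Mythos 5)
Your overall architecture (the cycle \ref{DoNF1} $\Rightarrow$ \ref{DoNF2} $\Rightarrow$ \ref{DoNF3} $\Rightarrow$ \ref{DoNF1}, then uniqueness and global existence read off from \ref{DoNF3}) is reasonable, and your treatment of \ref{DoNF3} $\Rightarrow$ \ref{DoNF1}, of uniqueness, and of global existence matches the paper's proof. The genuine gap is in the first arrow, \ref{DoNF1} $\Rightarrow$ \ref{DoNF2}, and it is not where you locate the difficulty. Statement \ref{DoNF2} asserts the Dini-derivative inequality for \emph{all} $t \geq t_0$, but the curve norm derivative formula assumed in the theorem (Definition~\ref{def:LumerDeimlingCurve}\ref{def:curvenormderivative}) is only an almost-everywhere identity. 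Your argument therefore yields $D^+\|w(t)\| \leq b\|w(t)\|$ only on the union of a full-measure subset of $\setdef{t}{w(t) \neq \vectorzeros[n]}$ with the set $\setdef{t}{w(t) = \vectorzeros[n]}$; the exceptional null set of instants at which $w(t) \neq \vectorzeros[n]$ but the pairing identity fails is left uncovered, so your conclusion that ``the inequality holds throughout'' is unjustified. This null set is genuinely nonempty for the norms the theorem targets: for the $\ell_1$ sign pairing, if a component $(Rw(t))_i$ crosses zero transversally at some $t^*$ with $w(t^*) \neq \vectorzeros[n]$, then $\|w(t^*)\|_{1,R}\,D^+\|w(t^*)\|_{1,R} > \WSIP{\dot w(t^*)}{w(t^*)}_{1,R}$, so at $t^*$ the one-sided Lipschitz bound on the pairing gives you no control on $D^+\|w(t^*)\|_{1,R}$. (The zeros of $w$, which you handle carefully, are the easy part.)

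The repair --- and this is exactly how the paper organizes its proof --- is to reach \ref{DoNF2} only \emph{after} \ref{DoNF3}. The almost-everywhere inequality is all that the nonsmooth \GB inequality (Lemma~\ref{lemma:nonsmooth-GB}) requires, so your argument does prove \ref{DoNF1} $\Rightarrow$ \ref{DoNF3}; the everywhere statement \ref{DoNF2} is then recovered from the integrated estimate. The paper does this by passing to the Deimling pairing of Appendix~\ref{app:Deimling}: the proof of \ref{DoNF3} $\Rightarrow$ \ref{DoNF1} transfers the bound to $(f(t,x)-f(t,y),x-y)_+ \leq b\|x-y\|^2$, and the Deimling curve norm derivative formula (Lemma~\ref{lemma:DeimlingDerivative}) holds for \emph{all} $t$, not merely almost every $t$, which closes the gap. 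A shorter alternative fix: \ref{DoNF3} applied between $t$ and $t+h$ gives $\|w(t+h)\| \leq e^{bh}\|w(t)\|$, whence $D^+\|w(t)\| \leq \lim_{h \to 0^+} h^{-1}\bigl(e^{bh}-1\bigr)\|w(t)\| = b\|w(t)\|$ at every $t$. Either way, the arrow into \ref{DoNF2} must come from the exponential estimate \ref{DoNF3}, not directly from the almost-everywhere pairing identity.
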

\begin{proof}
  Regarding \ref{DoNF1} $\implies$ \ref{DoNF3}, let $x_0,y_0 \in C, t_0
  \geq 0$. If $\phi(t,t_0,x_0) = \phi(t,t_0,y_0)$ for some $t \geq t_0$,
  then Lemma~\ref{lemma:uniqueness} implies that the result holds. So
  suppose $\phi(t,t_0,x_0) \neq \phi(t,t_0,y_0)$. Let $v(t) =
  \phi(t,t_0,x_0)$ and $w(t) = \phi(t,t_0,y_0)$ and apply the curve norm
  derivative formula to $v(t) - w(t)$:
  \begin{multline*}
    \|v(t) - w(t)\|D^+\|v(t) - w(t)\| \\= \WSIP{f(t,v(t)) - f(t,w(t))}{v(t) - w(t)}, 
  \end{multline*}
  for almost every $t \geq 0$. By the assumption of \ref{DoNF1}, dividing
  by $\|v(t) - w(t)\| \neq 0$ implies that
  $$D^+\|\phi(t,t_0,x_0) - \phi(t,t_0,y_0)\| \leq b\|\phi(t,t_0,x_0) -
  \phi(t,t_0,y_0)\|,$$ for almost every $t \geq t_0$. Then applying the
  nonsmooth \GB inequality, Lemma~\ref{lemma:nonsmooth-GB},
  gives~\ref{DoNF3}.  Regarding \ref{DoNF3} $\implies$ \ref{DoNF1}, the
  proof is the same as in Theorem~\ref{thm:general}.  Regarding \ref{DoNF2}
  $\implies$ \ref{DoNF3}, the result follows from the nonsmooth \GB
  inequality, Lemma~\ref{lemma:nonsmooth-GB}.  Regarding \ref{DoNF3}
  $\implies$ \ref{DoNF2}, we invoke a concept from
  Appendix~\ref{app:Deimling}. Following the proof of~\ref{DoNF3}
  $\implies$~\ref{DoNF1} gives the inequality $(f(t,x) - f(t,y),x-y)_+ \leq
  b\|x - y\|^2$. Then applying the curve norm derivative formula for
  Deimling pairings, Lemma~\ref{lemma:DeimlingDerivative}, with $v(t) =
  \phi(t,t_0,x_0), w(t) = \phi(t,t_0,y_0)$ for $x_0,y_0 \in C, t_0 \geq 0$,
  implies that
  \begin{multline*}
    \|v(t) - w(t)\|D^+\|v(t) - w(t)\| \\\leq (f(t,v(t)) - f(t,w(t)), v(t) - w(t))_+,
  \end{multline*}
  for all $t \geq t_0$ for which $v(t),w(t)$ exist. Then substituting the
  previous inequality gives the result.  To see uniqueness, note that if
  $\phi(t_0,t_0,x_0) = \phi(t_0,t_0,y_0)$, then $\|x_0 - y_0\| = 0$ and
  $\|\phi(t,t_0,x_0) - \phi(t,t_0,y_0)\| = 0$ for all $t \geq t_0$ for
  which the solutions exist by \ref{DoNF3}. Regarding existence, if $b <
  0$, consider the flow $\phi(t,t_0,x^*)$ which is constant for all $t \geq
  t_0$. Then any other solution exponentially converges to $x^*$ and must
  exist for all $t \geq 0$.
\end{proof}

\begin{remark}
  %\begin{enumerate}
  %\item WSIPs need not satisfy Lumer's equality for
  %  Theorem~\ref{thm:generalization} to hold.
%\item 
  If $f(t,x)$ is only piecewise continuous in $t$, then both~\ref{DoNF1}
  and~\ref{DoNF2} imply~\ref{DoNF3}, but the converse need not hold. We
  refer to \cite{MDB-DL-GR:14} for contraction results for piecewise smooth
  vector fields in terms of their
  Jacobians. Theorem~\ref{thm:generalization} does not require the
  computation of Jacobians and demonstrates that contraction is completely
  captured by the one-sided Lipschitz condition.  \oprocend
\end{remark}

\subsection{Equilibrium contraction}
%\begin{lemma}[Matrix measure of the average Jacobian]
%	Given a convex set $C\subseteq\real^n$, consider a
%	continuously differentiable map $\map{f}{C}{\real^n}$ with Jacobian
%	$\jac{f}$, zero point $x^*$, and average Jacobian $\jacave{f}_{x^*}(x)$
%	with respect to $x^*$.  Let $\|\cdot\|$ denote a norm on $\real^n$ and
%	$\mu$ its associated matrix measure. For $b\in\real$, 
%	if $\mu(\jac{f}(x))\le{b}$ for every $x\in{C}$, then
%	$\mu(\jacave{f}_{x^*}(x))\le{b}$ for every $x\in{C}$.
%\end{lemma}
%\begin{proof}
%	Note that we have
%	\begin{align*}
%	\mu(\jacave{f}_{x^*}(x)) &=  \mu \left(\int_{0}^{1} \jac{f}(x^* + (x-x^*)s)ds\right) \\&\le \int_{0}^{1} \mu(\jac{f}(x^* + (x-x^*)s))ds \le b,
%	\end{align*}
%	where the second inequality is due to the subadditive
%	property of the matrix measure $\mu$.
%\end{proof}
%We are now ready to state the average contraction theorem in terms of WSIPs.

\begin{theorem}[Equilibrium contraction theorem]\label{aveContr}
  Consider the dynamics $\dot{x} = f(t,x)$, with $f$ continuous in $(t,x)$. Assume there exists
  $x^*$ satisfying $f(t,x^*) = \vectorzeros[n]$ for all $t \geq 0$. Let $C
  \subseteq \real^n$ be open, connected, and forward invariant with
  $x^* \in C$ and let $\|\cdot\|$ denote a norm with compatible weak pairing
  $\WSIP{\cdot}{\cdot}$ satisfying \textcolor{black}{Deimling's inequality,~\eqref{eq:DeimlingIneq}, and the curve norm derivative formula,~\eqref{eq:curvenormderivative}}. Then, for $b \in \real$, the following statements are equivalent:
  \begin{enumerate}
    %\item \label{ave1} $\mu(\jacave{f}_{x^*}(t,x)) \leq b, \quad \forall x \in C, \forall t \geq 0,$
    %\item \label{ave2} $\WSIP{\jacave{f}_{x^*}(t,x)v}{v} \leq b\|v\|^2, \quad \forall v \in \R^n, \forall x \in C, \forall t \geq 0,$
  \item \label{ave3} $\WSIP{f(t,x)}{x-x^*} \leq b\|x - x^*\|^2$, for all $x \in C, t \geq 0,$
  \item \label{ave4} $D^+\|\phi(t,t_0,x_0) - x^*\| \leq b\|\phi(t,t_0,x_0) - x^*\|$, for all $x_0 \in C, 0 \leq t_0 \leq t$. 
  \item \label{ave5} $\|\phi(t,t_0,x_0) - x^*\| \leq e^{b(t-s)}\|\phi(s,t_0,x_0) - x^*\|$, for all $x_0 \in C, 0 \leq t_0 \leq s \leq t$.
  \end{enumerate}
  Moreover, if $C$ is convex and there exists a continuous map $(t,x) \mapsto A(t,x) \in
  \real^{n \times n}$ such that $f(t,x) = A(t,x)(x - x^*)$ for all $t,x$,
  then $\mu(A(t,x)) \leq b$ for all $t,x$ implies \ref{ave3}, \ref{ave4},
  and \ref{ave5}.
\end{theorem}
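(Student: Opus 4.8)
The plan is to mirror the proof of Theorem~\ref{thm:generalization}, replacing the second trajectory $\phi(t,t_0,y_0)$ by the constant solution $t\mapsto x^*$ (which is a solution precisely because $f(t,x^*)=\vectorzeros[n]$). The crucial observation is that condition~\ref{ave3} is exactly the one-sided Lipschitz bound of $f(t,\cdot)$ evaluated only at the pair $(x,x^*)$: using $f(t,x^*)=\vectorzeros[n]$ and additivity, $\WSIP{f(t,x)}{x-x^*}=\WSIP{f(t,x)-f(t,x^*)}{x-x^*}$. Hence~\ref{ave3} is strictly weaker than the full one-sided Lipschitz condition of Theorem~\ref{thm:generalization}, so I cannot simply invoke that theorem; instead I rerun its machinery with the second argument frozen at $x^*$. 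Throughout I set $v(t)=\phi(t,t_0,x_0)-x^*$, so that $\dot v(t)=f(t,\phi(t,t_0,x_0))$.

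For \ref{ave3} $\implies$ \ref{ave5}, I would apply the curve norm derivative formula to $v(t)$ to obtain $\|v(t)\|D^+\|v(t)\|=\WSIP{f(t,\phi(t,t_0,x_0))}{v(t)}$ for almost every $t$; bounding the right-hand side by~\ref{ave3} and dividing by $\|v(t)\|\neq 0$ yields $D^+\|v(t)\|\leq b\|v(t)\|$ almost everywhere, and the nonsmooth \GB inequality (Lemma~\ref{lemma:nonsmooth-GB}) applied on $[s,t]$ integrates this to~\ref{ave5}. The implication \ref{ave4} $\implies$ \ref{ave5} is then immediate from the same \GB inequality.

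For \ref{ave5} $\implies$ \ref{ave3}, I would copy the limiting argument of \ref{ctGen:1} $\implies$ \ref{ctGen:5} in Theorem~\ref{thm:general}: fixing $x_0=x$ and $s=t_0$, expanding $\phi(t_0+h,t_0,x)=x+hf(t_0,x)+O(h^2)$, subtracting $\|x-x^*\|$, dividing by $h$, and letting $h\to 0^+$ gives $\lim_{h\to0^+}\tfrac{\|(x-x^*)+hf(t_0,x)\|-\|x-x^*\|}{h}\leq b\|x-x^*\|$, whence Deimling's inequality upgrades this to $\WSIP{f(t_0,x)}{x-x^*}\leq b\|x-x^*\|^2$, i.e.~\ref{ave3}. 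This same manipulation in fact bounds the Deimling pairing, $(f(t,x),x-x^*)_+\leq b\|x-x^*\|^2$; feeding that into the Deimling-pairing curve norm derivative formula (Lemma~\ref{lemma:DeimlingDerivative}), which holds with inequality for \emph{every} $t$, gives \ref{ave5} $\implies$ \ref{ave4} for all $t$ rather than merely almost every $t$, matching the way \ref{DoNF3} $\implies$ \ref{DoNF2} is handled.

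Finally, the sufficient condition is a one-line consequence of Lumer's equality (Theorem~\ref{theorem:lumer}), which is available since the weak pairing satisfies Deimling's inequality: for any $w$, $\WSIP{A(t,x)w}{w}\leq \mu(A(t,x))\|w\|^2\leq b\|w\|^2$, and taking $w=x-x^*$ together with $f(t,x)=A(t,x)(x-x^*)$ produces~\ref{ave3} directly. I expect the only genuine obstacle to be the degenerate instants where $v(t)=\vectorzeros[n]$, i.e.\ where a trajectory meets the equilibrium: there the division by $\|v(t)\|$ is illegal, but since $\dot v(t)=f(t,x^*)=\vectorzeros[n]$ at such $t$, one computes $D^+\|v(t)\|=\|\dot v(t)\|=0$, so the Dini inequality holds trivially and no uniqueness hypothesis on $f$ is required.
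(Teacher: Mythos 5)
Your proposal is correct, and for the three equivalences it follows essentially the same route as the paper: \ref{ave3} $\implies$ \ref{ave5} via the curve norm derivative formula applied to $v(t)=\phi(t,t_0,x_0)-x^*$, division by $\|v(t)\|$, and the nonsmooth \GB inequality (Lemma~\ref{lemma:nonsmooth-GB}); \ref{ave4} $\implies$ \ref{ave5} by \GB again; \ref{ave5} $\implies$ \ref{ave3} by the first-order expansion of the flow followed by Deimling's inequality; and \ref{ave5} $\implies$ \ref{ave4} by first extracting the Deimling-pairing bound $(f(t,x),x-x^*)_+ \leq b\|x-x^*\|^2$ and then invoking Lemma~\ref{lemma:DeimlingDerivative} to get the Dini inequality for \emph{every} $t$ (one nitpick: that lemma is an \emph{equality} for all $t$, not an inequality, though you use it in the correct direction, exactly as the paper does). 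You genuinely depart from the paper in two places, both defensibly improvements. First, for the ``Moreover'' claim the paper sets $v(t)=\phi(t,t_0,x_0)-x^*$, observes $\dot{v} = A(t,\phi(t,t_0,x_0))v$, and applies Coppel's differential inequality (Lemma~\ref{Coppel}) to conclude \ref{ave4}; you instead apply Lumer's equality (Theorem~\ref{theorem:lumer}), which is available since the weak pairing is assumed to satisfy Deimling's inequality, pointwise with $w=x-x^*$ to obtain $\WSIP{f(t,x)}{x-x^*} = \WSIP{A(t,x)(x-x^*)}{x-x^*} \leq \mu(A(t,x))\|x-x^*\|^2 \leq b\|x-x^*\|^2$, i.e.\ \ref{ave3} directly. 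Your argument is purely algebraic, involves no flow or differential inequality, and is arguably cleaner. Second, at the degenerate instants where the trajectory meets $x^*$ and division by $\|v(t)\|$ is illegal, the paper disposes of the issue with the terse case split ``if $\phi(t,t_0,x_0)=x^*$ the result holds,'' which is delicate here because $f$ is only continuous, solutions need not be unique, and a trajectory could a priori touch $x^*$ and then leave; your pointwise computation $D^+\|v(t)\| = \|\dot{v}(t)\| = \|f(t,x^*)\| = 0 \leq b\|v(t)\|$ at every such instant fills this in explicitly, so the Dini inequality holds almost everywhere and \GB then rules out any escape from the equilibrium. In short: same skeleton for the equivalences, a more careful treatment of the degenerate case, and a different (simpler) key lemma for the sufficient condition.
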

\begin{proof}
  Regarding \ref{ave3} $\implies$ \ref{ave5}, let $x_0 \in C, t_0 \geq 0$. If $\phi(t,t_0,x_0) = x^*$, then the
  result holds. So assume $\phi(t,t_0,x_0) \neq x^*$, let $v(t) = \phi(t,t_0,x_0)$ and apply the curve norm
  derivative formula to $v(t) - x^*$. Then
  \begin{align*}
    \|v(t) - x^*\|D^+\|v(t) - x^*\| &= \WSIP{f(t,v(t))}{v(t) - x^*} \\&\leq b\|v(t) - x^*\|^2,
  \end{align*}
  for almost every $t \geq 0$. Dividing by $\|v(t) - x^*\| \neq 0$ implies
  $D^+\|v(t) - x^*\| \leq b\|v(t) - x^*\|$ for almost every $t \geq
  0$. Then the nonsmooth \GB inequality,
  Lemma~\ref{lemma:nonsmooth-GB}, gives \ref{ave5}.

  Regarding \ref{ave4} $\implies$ \ref{ave5}, this result follows by
  applying the nonsmooth \GB inequality in Lemma~\ref{lemma:nonsmooth-GB}.

  Regarding \ref{ave5} $\implies$ \ref{ave3}, let $x_0 \in C, t_0 \geq 0$. Then for every $h > 0$,
  \begin{align*}
    \|\phi(t_0+h,t_0,x_0) - x^*\| &= \|x_0 - x^* + h(f(t_0,x_0))\| + O(h^2) \\&\leq e^{bh}\|x_0 - x^*\|.
  \end{align*}
  Subtracting $\textcolor{black}{\|x_0 - x^*\|}$ on both sides, dividing by $h > 0$ and taking the limit as $h \to 0^+$, we get
  \begin{align*}
    &\lim_{h \to 0^+} \frac{\|x_0 - x^* + h(f(t_0,x_0))\| - \|x_0 - x^*\|}{h} \\& \qquad\qquad \leq \lim_{h \to 0^+} \frac{e^{bh} - 1}{h}\|x_0 - x^*\|.
  \end{align*}
  Evaluating the right hand side limit and multiplying both sides by $\|x_0 - x^*\|$ gives
  \begin{align*}
    &\|x_0 - x^*\|\lim_{h \to 0^+}\frac{\|x_0 - x^* + h(f(t_0,x_0))\| - \|x_0 - x^*\|}{h} \\& \qquad \leq b\|x_0 - x^*\|^2,
  \end{align*}
  However, by the assumption of the weak pairing satisfying Deimling's inequality,~\eqref{eq:DeimlingIneq}, we get
  $\WSIP{f(t_0,x_0)}{x_0-x^*} \leq b\|x_0 - x^*\|^2.$
  Since $t_0, x_0$ were arbitrary, the result holds.

  Regarding \ref{ave5} $\implies$ \ref{ave4}, we invoke a concept from
  Appendix~\ref{app:Deimling}. Following the proof of \ref{ave5} $\implies$
  \ref{ave3}, we have
  \begin{equation} \label{eq:AverageDeimling}
    (f(t,x), x - x^*)_+ \leq b\|x - x^*\|^2, \quad \text{for all } x \in C, t \geq 0.
  \end{equation}
  Then let $x_0 \in C, t_0 \geq 0$, let $v(t) = \phi(t,t_0,x_0)$, and apply
  the curve norm derivative formula for Deimling pairings, 
  Lemma~\ref{lemma:DeimlingDerivative}, to $v(t) - x^*$ to get
  $$\|v(t) - x^*\|D^+\|v(t) - x^*\| \leq (f(t,v(t)),v(t))_+,$$ for all $t
  \geq 0$.  Using the inequality in \eqref{eq:AverageDeimling} proves the
  result.

  Now suppose that there exists a continuous map $(t,x) \mapsto
  A(t,x)$ such that $f(t,x) = A(t,x)(x-x^*)$ and $\mu(A(t,x)) \leq b$ for
  all $x \in C$ and all $t \geq 0$. Let $x_0 \in C, t_0 \geq 0$ and let $v(t) = \phi(t,t_0,x_0) - x^*$. Then
  $$\dot{v} = A(t,\phi(t,t_0,x_0))v.$$
  Applying Coppel's differential inequality, Lemma \ref{Coppel}, implies
  $$D^+\|v(t)\| \leq \mu(A(t,\phi(t,t_0,x_0)))\|v(t)\|.$$ Substituting $\mu(A(t,\phi(t,t_0,x_0)))
  \leq b$ gives \ref{ave4}.
\end{proof}

\begin{remark}
  \begin{enumerate}
  \item A vector field $f$ satisfying condition~\ref{ave3}, with $b<0$ is
    said to be \emph{equilibrium contracting with respect to $x^*$ and with
      rate $|b|$}.

  \item If $f(t,x)$ is continuously differentiable in $x$, the mean
    value theorem for vector-valued functions implies
    \begin{align*}
      f(t,x) &= f(t,x) - f(t,x^*) \\&= \left(\int_0^1 \jac{f}(t,x^* + (x -
      x^*)s)ds\right)(x - x^*).
    \end{align*}    
    One can then define the \emph{average Jacobian} of $f(t,x)$ about the
    equilibrium $x^*$ to be
    \begin{equation}
      \jacave{f}_{x^*}(t,x) := \int_0^1 \jac{f}(t,x^* + (x - x^*)s)ds.
    \end{equation}
    Therefore, there always exists at least one matrix-valued map $A(t,x)$
    such that $f(t,x)=A(t,x)(x-x^*)$.
    
  \item Condition~\ref{ave4} implies that the choice of Lyapunov function
    $V(x) = \|x - x^*\|$ for $b < 0$ gives global exponential stability
    within $C$.  
    \oprocend
  \end{enumerate}
\end{remark}

\begin{example}[Counterexample]
  To see that \ref{ave3} need not imply $\mu(A(t,x)) \leq b$, consider the
  dynamics in $\real^2$
  \begin{equation}\label{eq:exampleaverage}
    \dot{x} = A(x)x = \begin{bmatrix} -x_2^2-1 & 0 \\ 0 &
      x_1^2-1 \end{bmatrix}\begin{bmatrix} x_1 \\ x_2 \end{bmatrix},
  \end{equation}
  with equilibrium point $x^* = \vectorzeros[2]$. For the unweighted
  $\ell_2$ norm, $x\mapsto A(x)x$ satisfies
  Theorem~\ref{aveContr}\ref{ave3} with $b = -1$ since $$\WSIP{A(x)x}{x}_2 = x^\top A(x)x =
  -\|x\|_2^2.$$ However, $\mu_2(A(x))
  = x_1^2 - 1 \geq -1$. \oprocend
\end{example}

  %Further, computing the Jacobian of the vector field
  %yields $$\jac{(A(x)x)} = \begin{bmatrix} -x_2^2 - 1 & -2x_1x_2
  %\\ 2x_1x_2 & x_1^2 - 1 \end{bmatrix},$$ which also has unweighted
  %$\ell_2$ matrix measure $\mu_2(\jac{(A(x)x)}) = x_1^2 - 1$, so the
  %vector field is not strongly contracting, while it is average
  %contracting. \oprocend

\section{Robustness of contracting systems}\label{sec:robustness}

We include a brief review of signal norms and system gains and refer the
reader to~\cite[Chapter 2]{CAD-MV:1975} for more details. 

\begin{defn}[Signal norms and system gains~{\cite[Chapter~2]{CAD-MV:1975}}]
  Given a norm $\|\cdot\|_\mcX$ on $\mcX=\real^n$, let
  $\mathcal{L}_{\mcX}^q$, $q \in [1,\infty]$, denote the vector space of
  continuous signals $\map{x}{\realnonnegative}{\real^n}$ with well-defined
  and bounded norm
  \begin{equation}
    \|x(\cdot)\|_{\mcX,q} := \begin{cases} \Big(\int_0^\infty
      \|x(t)\|_\mcX^q dt\Big)^{1/q}, \quad & q \neq \infty, \\ \sup_{t \geq 0}
      \|x(t)\|_\mcX, & q = \infty.
    \end{cases}
  \end{equation}
  A dynamical system with state $x\in\mcX=\real^n$ and input
  $u\in\mcU=\real^k$ has \emph{$\mathcal{L}_{\mcX,\mcU}^q$ gain} bounded by
  $\gamma>0$ if, for all $u \in \mathcal{L}_{\mcU}^q$, the state $x$ from
  zero initial condition satisfies
  \begin{equation*}
    \|x(\cdot)\|_{\mcX,q} \leq  \gamma\|u(\cdot)\|_{\mcU,q}.
  \end{equation*}
\end{defn}

In what follows, for a control system $\dot{x} = f(t,x,u(t))$, we write
$x(t)$ for the flow $\phi(t,t_0,x_0)$ subject to the vector field resulting
from control input $u_x(t)$.

\begin{theorem}[Input-to-state stability and gain of contracting systems]
  \label{thm:ISS-contracting}
  For a time and input-dependent vector field $f$, consider the dynamics
  \begin{equation}\label{eq:contracting+input}
  \dot{x} = f(t,x,u(t)), \qquad x(0)=x_0\in\mcX=\real^n,
  \end{equation}
  where $u$ takes values in $\mcU=\real^k$. Assume there exists a norm
  $\map{\norm{\cdot}{\mcX}}{\real^n}{\realnonnegative}$ with compatible
  weak pairing satisfying the curve norm derivative formula,~\eqref{eq:curvenormderivative}, for all time
  $\WSIP{\cdot}{\cdot}_\mcX$, a norm
  $\map{\norm{\cdot}{\mcU}}{\real^k}{\realnonnegative}$, and positive
  scalars $c$ and $\ell$ such that
  \begin{enumerate}[label=\textup{(A\arabic*)}]
  \item\label{ass:iss:contract} 
  %$\WSIP{f(t,x,u)-f(t,y,u)}{x-y}_\mcX \leq -c
  %  \norm{x-y}{\mcX}^2$
  $\osL(f(t,\cdot,u)) \leq -c$ with respect to the weak pairing 
  $\WSIP{\cdot}{\cdot}_{\mcX}$, for all $t \geq 0$,
    $u\in\real^k$,
  \item\label{ass:iss:lipsch} $\norm{f(t,x,u)-f(t,x,v)}{\mcX}\leq \ell
    \norm{u-v}{\mcU}$, for all $t\in\realnonnegative$, $x\in\real^n$,
    $u,v\in\real^k$.
  \end{enumerate}
  Then
  \begin{enumerate}
  \item\label{eq:D+error} any two solutions $x(t)$ and $y(t)$
    to~\eqref{eq:contracting+input} with continuous input signals
    $\map{u_x,u_y}{\realnonnegative}{\real^k}$ satisfy for all $t \geq 0$,
    \begin{equation*}
      D^+\|x(t) - y(t)\|_\mcX \leq -c\|x(t) - y(t)\|_\mcX + \ell
      \norm{u_x(t)-u_y(t)}{\mcU}.
    \end{equation*}
    
  \item\label{eq:D+error-int} $f$ is \emph{incrementally input-to-state
    stable}, in the sense that, from any initial conditions
    $x_0,y_0\in\real^n$,
    \begin{multline*}
      \|x(t) - y(t)\|_\mcX \leq e^{-ct}\|x_0 - y_0\|_\mcX \\
      + \frac{\ell(1-e^{-ct})}{c} \sup_{\tau\in[0,t]}\norm{u_x(\tau)-u_y(\tau)}{\mcU},
    \end{multline*}

  \item\label{eq:incremental-gain} $f$ has \emph{incremental
    $\mathcal{L}_{_\mcX,_\mcU}^q$ gain bounded by $\ell/c$, for
    $q\in[1,\infty]$}, in the sense that solutions with $x(0)=y(0)$ satisfy
    \begin{equation}
      \norm{x(\cdot)-y(\cdot)}{\mcX,q} \leq \frac{\ell}{c} \,
      \norm{u_x(\cdot)-u_y(\cdot)}{\mcU,q}.
    \end{equation}
    \setcounter{saveenum}{\value{enumi}}
  \end{enumerate}
Next, assume that $f$ satisfies the weaker
Assumptions~\ref{ass:iss:contract-ave}-\ref{ass:iss:lipsch} instead
of~\ref{ass:iss:contract}-\ref{ass:iss:lipsch}, where
\begin{enumerate}[label=\textup{(A\arabic*$'$)}]\setcounter{enumi}{0}
\item\label{ass:iss:contract-ave} there exists $x^* \in \real^n$ and
  continuous $\map{u^*}{\realnonnegative}{\real^k}$ such that
  $f(t,x^*,u^*(t))=\vect{0}_n$ for all $t$, and
  $\WSIP{f(t,x,u)-f(t,x^*,u)}{x-y}_\mcX \leq -c \norm{x-x^*}{\mcX}^2$ for
  all $t \geq 0, x \in \real^n, u \in \real^k$.
\end{enumerate}
Then
\begin{enumerate} \setcounter{enumi}{\value{saveenum}}
\item\label{eq:ave+Error} the solution $x(t)$
  to~\eqref{eq:contracting+input} satisfies $\ds D^+\|x(t)- x^*\|_\mcX \leq
  -c\|x(t)- x^*\|_\mcX + \ell \norm{u(t)-u^*(t)}{\mcU}$, for all $t \geq
  0$,
\item\label{eq:ave+Error-int} $f$ is \emph{input-to-state stable} in the
  sense that $\ds \|x(t) - x^*\|_\mcX \leq e^{-ct}\|x_0 - x^*\|_\mcX +
  \frac{\ell(1-e^{-ct})}{c}
  \sup_{\tau\in[0,t]}\norm{u(\tau)-u^*(\tau)}{\mcU}$,
\item\label{eq:finite-gain} $f$ has \emph{ $\mathcal{L}_{\mcX,\mcU}^q$ gain
  bounded by $\ell/c$, for $q\in[1,\infty]$}.
\end{enumerate}
\end{theorem}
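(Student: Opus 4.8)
The plan is to establish the two pointwise differential inequalities \ref{eq:D+error} and \ref{eq:ave+Error} first, and then obtain the remaining integrated bounds as consequences of the nonsmooth \GB inequality and a standard $\mathcal{L}^q$-gain estimate. For \ref{eq:D+error}, fix two solutions $x(t),y(t)$ driven by $u_x,u_y$. At any time where $x(t)=y(t)$, Lemma~\ref{lemma:uniqueness} directly gives $D^+\norm{x(t)-y(t)}{\mcX}\leq \ell\norm{u_x(t)-u_y(t)}{\mcU}$, which is the claimed bound since $-c\norm{x(t)-y(t)}{\mcX}=0$ there. Where $x(t)\neq y(t)$, I would apply the curve norm derivative formula to the differentiable curve $z(t)=x(t)-y(t)$, whose derivative is $\dot z(t)=f(t,x(t),u_x(t))-f(t,y(t),u_y(t))$, to obtain $\norm{z}{\mcX}D^+\norm{z}{\mcX}=\WSIP{\dot z}{z}_\mcX$. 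The key algebraic step is the splitting $\dot z = \big(f(t,x,u_x)-f(t,y,u_x)\big)+\big(f(t,y,u_x)-f(t,y,u_y)\big)$: subadditivity (property~\ref{WSIP1}) bounds $\WSIP{\dot z}{z}_\mcX$ by the sum of the two pairings, assumption~\ref{ass:iss:contract} controls the first by $-c\norm{z}{\mcX}^2$, and Cauchy--Schwarz (property~\ref{WSIP5}) together with~\ref{ass:iss:lipsch} controls the second by $\ell\norm{u_x-u_y}{\mcU}\norm{z}{\mcX}$. Dividing by $\norm{z}{\mcX}\neq 0$ yields \ref{eq:D+error}.

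Given \ref{eq:D+error}, statement \ref{eq:D+error-int} follows by applying the nonsmooth \GB inequality (Lemma~\ref{lemma:nonsmooth-GB}) with $m(t)\equiv -c$ and $r(t)=\ell\norm{u_x(t)-u_y(t)}{\mcU}$, then bounding $r(\tau)$ by its supremum over $[0,t]$ and integrating $\int_0^t e^{c\tau}\,d\tau=(e^{ct}-1)/c$. For the incremental gain \ref{eq:incremental-gain}, I would specialize to $x(0)=y(0)$ so that \GB gives the causal convolution bound $\norm{x(t)-y(t)}{\mcX}\leq \ell\int_0^t e^{-c(t-\tau)}\norm{u_x(\tau)-u_y(\tau)}{\mcU}\,d\tau$. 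Recognizing the right-hand side as the convolution of the nonnegative input signal with the kernel $\ell e^{-c\,\cdot}$, whose $\mathcal{L}^1$ norm equals $\ell/c$, Young's convolution inequality gives $\norm{x(\cdot)-y(\cdot)}{\mcX,q}\leq (\ell/c)\norm{u_x(\cdot)-u_y(\cdot)}{\mcU,q}$ for every $q\in[1,\infty]$.

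The three equilibrium statements \ref{eq:ave+Error}, \ref{eq:ave+Error-int}, and \ref{eq:finite-gain} are proved in the same manner, replacing $y(t)$ by the constant solution $x^*$ (which solves the system under input $u^*$, since $f(t,x^*,u^*(t))=\vect{0}_n$) and the curve $z$ by $x(t)-x^*$. The only change is the decomposition, which now reads $f(t,x,u)=\big(f(t,x,u)-f(t,x^*,u)\big)+\big(f(t,x^*,u)-f(t,x^*,u^*)\big)$: assumption~\ref{ass:iss:contract-ave} bounds the first pairing by $-c\norm{x-x^*}{\mcX}^2$ and Cauchy--Schwarz with~\ref{ass:iss:lipsch} bounds the second by $\ell\norm{u-u^*}{\mcU}\norm{x-x^*}{\mcX}$, giving \ref{eq:ave+Error}; the integrated bound \ref{eq:ave+Error-int} and the gain \ref{eq:finite-gain} then follow exactly as before, specializing to $x(0)=x^*$.

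I expect the main obstacle to be rigor at the points where the two trajectories (or the trajectory and $x^*$) coincide: the curve norm derivative formula only provides equality for almost every $t$, and on the set $\{z=0\}$ one cannot divide by $\norm{z}{\mcX}$, so the estimate must be patched using Lemma~\ref{lemma:uniqueness} and one must check that the resulting inequality holds for almost every $t$, as required by \GB. The other slightly delicate point is justifying the $\mathcal{L}^q$ step uniformly in $q\in[1,\infty]$, i.e.\ invoking Young's inequality for the causal convolution operator, equivalently citing the standard $\mathcal{L}^q$-gain bound for an exponentially stable first-order filter.
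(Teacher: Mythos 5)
Your proposal is correct and follows essentially the same route as the paper's proof: the same two-term splitting of $f(t,x,u_x)-f(t,y,u_y)$ (and of $f(t,x,u)$ in the equilibrium case), the same use of Lemma~\ref{lemma:uniqueness} at coincidence times and of the curve norm derivative formula with subadditivity and Cauchy--Schwarz elsewhere, and the nonsmooth \GB inequality for the integrated bounds. The only divergence is in the gain statements \ref{eq:incremental-gain} and \ref{eq:finite-gain}: you bound the \GB convolution $\ell\int_0^t e^{-c(t-\tau)}\norm{u_x(\tau)-u_y(\tau)}{\mcU}\,d\tau$ directly by Young's inequality, using that the kernel $\ell e^{-c\,\cdot}$ has $\mathcal{L}^1$ norm $\ell/c$, whereas the paper compares $\|x(t)-y(t)\|_\mcX$ against the scalar system $\dot\zeta=-c\zeta+\ell w$, $\zeta(0)=0$, via the Dini comparison lemma (Lemma~\ref{lemma:dinicomparison}) and then cites the known $\mathcal{L}^q$-induced norm $\ell/c$ of this first-order filter. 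These are mathematically equivalent---your Young's-inequality step is a self-contained proof of the fact the paper cites---so both arguments deliver the gain uniformly over $q\in[1,\infty]$, and the delicate points you flag (almost-everywhere validity of the curve norm derivative formula and the patching at coincidence times) are handled in the paper exactly as you propose.
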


\begin{proof}
  The result holds at times $t \geq 0$ when $x(t) = y(t)$ by
  Lemma~\ref{lemma:uniqueness}. So assume $x(t) \neq y(t)$. By the curve
  norm derivative formula, Assumptions~\ref{ass:iss:contract}
  and~\ref{ass:iss:lipsch} imply
  \begin{align*}
    &\|x(t) - y(t)\|_\mcX D^+\|x(t) - y(t)\|_\mcX \\
    &\quad= \WSIP{f(t,x(t),u_x(t)) - f(t,y(t),u_y(t))}{x(t)-y(t)}_\mcX\\
    &\quad\leq
    \WSIP{f(t,x(t),u_x(t))- f(t,y(t),u_x(t))}{x(t)-y(t)}_\mcX \\
    &\quad\quad+  \WSIP{f(t,y(t),u_x(t)) - f(t,y(t),u_y(t))}{x(t)-y(t)}_\mcX \\
    &\quad\leq
    -c \norm{x(t)-y(t)}{\mcX}^2 + \ell\norm{x(t)-y(t)}{\mcX}\norm{u_x(t)-u_y(t)}{\mcU},
  \end{align*}
  which follows from the subadditivity and the Cauchy-Schwarz
  inequality for the weak pairing. This proves statement~\ref{eq:D+error}.
  Statement~\ref{eq:D+error-int} follows from the nonsmooth \GB inequality,
  Lemma~\ref{lemma:nonsmooth-GB}. Regarding
  statement~\ref{eq:incremental-gain}, let $w(t)=\|u_x(t)-u_y(t)\|_\mcU$
  and consider the scalar equation
  \begin{equation}\label{eq:LTI}
    \dot{\zeta} = -c\zeta + \ell w, \quad \zeta(0) = \|x(0) - y(0)\|_\mcX = 0.
  \end{equation}  
  Then by the Dini comparison lemma, Lemma~\ref{lemma:dinicomparison},
  $\|x(t) - y(t)\|_\mcX \leq \zeta(t)$ for all $t \geq 0$. Let $G$ be the
  linear operator given by $w \mapsto \zeta$ via the solution of
  $\eqref{eq:LTI}$. In other words, $\zeta = Gw$. Since $G$ arises from a
  first-order \textcolor{black}{scalar} linear system, its induced $\mathcal{L}^q$ norm is $\ell/c$
  for all $q \in [1,\infty]$, see~\cite[Proposition 2.3]{BB-MD:93}. Thus,
  \begin{equation*}
    \|x(\cdot) - y(\cdot)\|_{\mcX,q} \leq
    \frac{\ell}{c}\norm{u_x(\cdot)-u_y(\cdot)}{\mcU,q}, \; \; \text{for all } q \in [1,\infty].
  \end{equation*}
  Regarding statement~\ref{eq:ave+Error}, apply the curve norm derivative
  formula to $x(t)-x^*$ to get
  \begin{align*}
  &\|x(t) - x^*\|_\mcX D^+\|x(t) - x^*\|_\mcX \\
  &\qquad= \WSIP{f(t,x(t),u(t))-f(t,x^*,u^*(t))}{x(t)-x^*}_\mcX \\
  &\qquad\leq
  \WSIP{f(t,x(t),u(t)) - f(t,x^*,u(t))}{x(t)-x^*}_\mcX \\
  &\qquad\quad +\WSIP{f(t,x^*,u(t)) - f(t,x^*,u^*(t))}{x(t)-x^*}_\mcX \\
  &\qquad \leq -c \norm{x(t)-x^*}{\mcX}^2 +  \ell\norm{u(t) - u^*(t)}{\mcU}\norm{x(t)-x^*}{\mcX}.
  \end{align*}
  Statement~\ref{eq:ave+Error-int} then follows from the nonsmooth \GB
  inequality, Lemma~\ref{lemma:nonsmooth-GB}. The proof of
  statement~\ref{eq:finite-gain} is identical to the proof
  of~\ref{eq:incremental-gain}.
\end{proof}

The next result studies contractivity under perturbations.

\begin{theorem}[Contraction under perturbations] \label{thm:robustness-contracting}
  Consider the dynamics $\dot{x}=f(t,x) + g(t,x)$. If $\osL(f(t,\cdot)) \leq -c < 0$ and $\osL(g(t,\cdot)) \leq d \in \real$ with respect to the same weak pairing $\WSIP{\cdot}{\cdot}$ for all $t \geq 0$,
  then
  \begin{enumerate} 
  \item\label{fact:robust} \emph{(contractivity under perturbations)} if
    $d<c$, then $f+g$ is strongly contracting with rate $c-d$,
  \item\label{fact:robust:eq} \emph{(equilibrium point under
    perturbations)} if additionally $f$ and $g$ are time-invariant, then
    the unique equilibrium point $x^*$ of $f$ and $x^{**}$ of $f+g$ satisfy
    \begin{equation}
      \norm{x^*-x^{**}}{} \leq \frac{\norm{g(x^*)}{}}{c-d}.
    \end{equation}
    \setcounter{saveenum}{\value{enumi}}
  \end{enumerate}
%  Next, if $f$ and $g$ vanish at $x^*$ and satisfy
%  $\mu(\jacave{f}_{x^*}(t,x))\leq-c$ and $\mu(\jacave{g}_{x^*}(t,x))\leq
%  d$. \footnote{i.e., has one-sided Lipschitz constant $d\in\real$ at
%    $x^*$} Then
%  \begin{enumerate} \setcounter{enumi}{\value{saveenum}}
%  \item\label{fact:robust:ave} \emph{(average contractivity under vanishing
%    perturbations)} if $d<c$, then $f+g$ is average contracting with rate
%    $c-d$ with respect to the equilibrium $x^*$.
%  \end{enumerate}  
\end{theorem}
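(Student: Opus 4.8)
The plan is to treat the two claims separately. Claim~\ref{fact:robust} is essentially immediate from the calculus of one-sided Lipschitz constants, while claim~\ref{fact:robust:eq} requires a short argument built on the one-sided Lipschitz condition for $f+g$ together with the Cauchy--Schwarz inequality for the weak pairing.

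For claim~\ref{fact:robust}, I would invoke the subadditivity of $\osL$ from Proposition~\ref{prop:osLProperties}\ref{osL4}, applied to $f(t,\cdot)$ and $g(t,\cdot)$ for each fixed $t$:
\[
\osL\big((f+g)(t,\cdot)\big) \leq \osL(f(t,\cdot)) + \osL(g(t,\cdot)) \leq -c + d = -(c-d).
\]
Since $d<c$ gives $-(c-d)<0$, condition~\ref{ctGen:5} of Theorem~\ref{thm:general} shows that $f+g$ is strongly contracting with rate $c-d$.

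For claim~\ref{fact:robust:eq}, the idea is to evaluate the one-sided Lipschitz condition for $f+g$, which by claim~\ref{fact:robust} holds with constant $d-c$, at the pair of equilibria $x^{**}$ and $x^*$. Starting from
\[
\WSIP{(f+g)(x^{**}) - (f+g)(x^*)}{x^{**}-x^*} \leq (d-c)\|x^{**}-x^*\|^2,
\]
I would substitute the equilibrium identities $(f+g)(x^{**})=\vectorzeros[n]$ and $(f+g)(x^*)=f(x^*)+g(x^*)=g(x^*)$, using $f(x^*)=\vectorzeros[n]$. The left-hand side then equals $\WSIP{-g(x^*)}{x^{**}-x^*}$, which by the sign-invariance $\WSIP{-u}{-w}=\WSIP{u}{w}$ in Definition~\ref{defn:WSIP}\ref{WSIP3} rewrites as $\WSIP{g(x^*)}{x^*-x^{**}}$. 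Rearranging and applying the Cauchy--Schwarz inequality, property~\ref{WSIP5}, yields
\[
(c-d)\|x^*-x^{**}\|^2 \leq -\WSIP{g(x^*)}{x^*-x^{**}} \leq \|g(x^*)\|\,\|x^*-x^{**}\|.
\]
Dividing through by $(c-d)\|x^*-x^{**}\|>0$ when $x^*\neq x^{**}$ gives the stated estimate, and the case $x^*=x^{**}$ holds trivially since the right-hand side is nonnegative.

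The computation is routine once set up; the only point demanding care is the orientation of the weak pairing. Because the pairing is only weakly homogeneous and not symmetric, one cannot freely swap arguments or signs, so I would carefully track the minus signs via property~\ref{WSIP3}, and---since the relevant quantity $\WSIP{g(x^*)}{x^*-x^{**}}$ is negative---apply Cauchy--Schwarz to its negation rather than to the quantity itself. I would also record that the equilibria $x^*$ and $x^{**}$ are well defined: time-invariant strong contractivity of $f$ and, by claim~\ref{fact:robust}, of $f+g$ guarantees that each admits a unique equilibrium.
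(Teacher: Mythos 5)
Your proof is correct, and while your treatment of statement~\ref{fact:robust} coincides exactly with the paper's (subadditivity of $\osL$ via Proposition~\ref{prop:osLProperties}\ref{osL4}), your argument for statement~\ref{fact:robust:eq} is genuinely different. The paper proves the equilibrium bound dynamically: it introduces the auxiliary system $\dot{y}=f(y)+g(y)-g(x^*)$, whose unique equilibrium is $x^*$ and which differs from $\dot{x}=f(x)+g(x)$ by the constant offset $g(x^*)$; treating that offset as an input difference, it invokes the incremental input-to-state stability estimate of Theorem~\ref{thm:ISS-contracting}\ref{eq:D+error-int} and lets $t\to\infty$, so that the two flows converge to $x^{**}$ and $x^*$ respectively and the ISS gain $\ell/c$ with $\ell=1$, $c$ replaced by $c-d$ yields the bound. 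Your argument is instead purely static: you evaluate the one-sided Lipschitz inequality for $f+g$ (with constant $d-c$) at the pair $(x^{**},x^*)$, use $f(x^*)=\vectorzeros[n]$ to identify $(f+g)(x^*)=g(x^*)$, flip signs via property~\ref{WSIP3}, and close with Cauchy--Schwarz (property~\ref{WSIP5}) together with compatibility $\WSIP{x}{x}=\|x\|^2$. Your route is more elementary and self-contained: it needs no flows, no global existence, and no convergence-to-equilibrium argument (all of which the paper's limit $t\to\infty$ implicitly relies on, via Theorem~\ref{thm:generalization}); it also makes transparent that the bound is a pointwise consequence of the one-sided Lipschitz condition at just two points. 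What the paper's route buys is reuse of machinery already established --- the bound appears as a direct instance of the incremental ISS gain estimate, requiring no new pairing manipulations --- and a dynamic interpretation of the equilibrium shift as the asymptotic response to a constant disturbance. One minor point: your appeal to Theorem~\ref{thm:general}\ref{ctGen:5} for the label ``strongly contracting'' is harmless, but note that the $\osL$ bound itself is the defining condition; no differentiability of $f+g$ is needed, which your argument (like the paper's) correctly never uses.
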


\begin{proof}
  Statement~\ref{fact:robust} is an immediate
  consequence of Proposition~\ref{prop:osLProperties}\ref{osL4}.
  Finally, we consider the two initial value problems $\dot{x}=f(x)+g(x)$
  and $\dot{y}=f(y)+g(y)-g(x^*)$ with arbitrary initial conditions.  Note
  $f(x^*)+g(x^*)-g(x^*)=0$, that is, $x^*$ is the unique equilibrium of the
  contracting system $f(y)+g(y)-g(x^*)$.  Taking the limit as $t\to\infty$,
  Theorem~\ref{thm:ISS-contracting}\ref{eq:D+error-int} implies
  statement~\ref{fact:robust:eq}.
\end{proof}

%Note: consider a vector field $g$ with $g(x^*)=0$. If $g$ is contracting
%with rate $d$, then it is also average contracting with the same rate.  If
%$g$ satisfies a bound of the form $\norm{g(t,x)}{}\leq d \norm{x-x^*}{}$,
%then $g$ is average contracting with the rate $d$.

\section{Networks of contracting systems}\label{sec:networks}
We consider the \emph{interconnection of $n$ dynamical systems}
\begin{align}\label{eq:interconnected}
  \dot{x}_i = f_i (t,x_i, x_{-i}), \quad \text{for } i\in\until{n},
\end{align}
where $x_i\in\real^{N_i}$, $N=\sum\nolimits_{i=1}^nN_i$, the subscript
{\color{black}$-i=\until{n}\setminus\{i\}$} so that
$x_{-i}\in\real^{N-N_i}$, and $\map{f_i}{\realnonnegative \times
  \real^{N_i} \times \real^{N-N_i}}{\real^{N_i}}$ is continuous.  Let
$\norm{\cdot}{i}$ and $\WSIP{\cdot}{\cdot}_i$ denote a norm and a weak
pairing on $\real^{N_i}$. Assume
\begin{enumerate}[label=\textup{(C\arabic*)}]
\item\label{A:contract} at fixed $x_{-i}$ and $t$, each map $x_i\mapsto f_i
  (t,x_i, x_{-i})$ satisfies $\textcolor{black}{\osL(f_i(t,\cdot,x_{-i})) \leq -c_i<0}$ with
  respect to $\WSIP{\cdot}{\cdot}_i$ which satisfies Deimling's inequality,~\eqref{eq:DeimlingIneq}, and the curve norm derivative formula,~\eqref{eq:curvenormderivative}. {\color{black} If $f_i$ is
    continuously differentiable in $x_i$, then this condition is equivalent to
    \begin{equation*}
      \mu_i(\jac{f_i}(t,x_i,x_{-i})) \leq -c,\enspace
      \text{for $x_i \in\real^{N_i}$, $x_{-i}\in\real^{N-N_i}$}
    \end{equation*}}
  
  % that is, for every $x_i, y_i \in\real^{N_i}$,
  %\begin{equation*}
  %  \WSIP{f_i (t,x_i,x_{-i}) - f_i (t,y_i,x_{-i})}{x_i-y_i}_i \leq - c_i \norm{x_i-y_i}{i}^2.
  %\end{equation*}

\item\label{A:contract:mixed} at fixed $x_i$ and $t$, each map
  $x_{-i}\mapsto f_i (t,x_i, x_{-i})$ satisfies a Lipschitz condition
  where, for all $j \in \until{n}\setminus \{i\}$, there exists
  $\gamma_{ij}\in\realnonnegative$, such that for all $x_j,y_j \in
  \real^{N_j}$,
  \begin{equation*}
    \norm{f_i (t,x_i,x_{-i}) - f_i (t,x_i,y_{-i})}{i} \leq 
    \sum_{j=1,j\neq{i}}^n    \gamma_{ij} \norm{x_j-y_j}{j}.
  \end{equation*}
\end{enumerate}
%% For systems satisfying~\ref{A:contract} and~\ref{A:contract:mixed}, define
%% the \emph{gain matrix}
%% \begin{equation*}
%%   \Gamma := 
%%   \begin{bmatrix}
%%     -c_1 &  \dots & \gamma_{1n}\\
%%     \vdots & & \vdots \\
%%     \gamma_{n1} & \dots & - c_n
%%   \end{bmatrix}.
%% \end{equation*}

\begin{lemma}[Efficiency of diagonally weighted norm {\cite[Lemma 3]{OP-MV:06}}]\label{lemma:diagonalweights}
  Let $M \in \real^{n\times n}$ be Metzler and let $\alpha(M)$ denote its
  spectral abscissa. Then, for each $\varepsilon > 0$, there exists $\xi
  \in \realpositive^n$ satisfying the LMI
  \begin{equation}\label{eq:diagonalstability}
    \diag(\xi) M + M^\top \diag(\xi) \preceq 2 (\alpha(M) + \varepsilon) \diag(\xi).
  \end{equation}
  Moreover, if $M$ is irreducible, the result holds with $\varepsilon = 0$.
\end{lemma}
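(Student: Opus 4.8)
The plan is to prove the two cases separately, since the strict slack $\varepsilon$ is needed precisely when $M$ is reducible. Throughout I would use the Perron--Frobenius structure of Metzler matrices: because $M + cI$ is entrywise nonnegative for $c$ large, $\alpha(M)$ is an eigenvalue of $M$ admitting a nonnegative right eigenvector, and if $M$ is irreducible this eigenvalue is simple with a strictly positive right eigenvector $v \in \realpositive^n$ and a strictly positive left eigenvector $w \in \realpositive^n$, i.e.\ $Mv = \alpha(M)v$ and $M^\top w = \alpha(M)w$.

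First I would treat the irreducible case and show it yields $\varepsilon = 0$. Set $\xi_i = w_i/v_i > 0$, so $\diag(\xi) = \diag(w)\diag(v)^{-1}$, and define the symmetric matrix $G := 2\alpha(M)\diag(\xi) - \big(\diag(\xi)M + M^\top\diag(\xi)\big)$; the goal is $G \succeq 0$. Since $\diag(v) \succ 0$, this is equivalent to the congruent condition $\diag(v)\,G\,\diag(v) \succeq 0$, and a direct computation using $\diag(v)\diag(\xi) = \diag(w)$ turns this matrix into $2\alpha(M)\diag(v\circ w) - (B + B^\top)$, where $B := \diag(w)M\diag(v)$ has entries $B_{ij} = w_i M_{ij} v_j$. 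I would then recognize this as a weighted graph Laplacian: its off-diagonal entries $-(B_{ij}+B_{ji})$ are nonpositive (because $M_{ij}\ge 0$ for $i\ne j$ and $v,w>0$), and its row sums vanish, since the eigenvector identities give $\sum_j B_{ij} = w_i(Mv)_i = \alpha(M)v_iw_i$ and $\sum_i B_{ij} = v_j(M^\top w)_j = \alpha(M)v_jw_j$, so each row of $B+B^\top$ sums to $2\alpha(M)v_iw_i$. Any symmetric matrix with nonpositive off-diagonals and zero row sums satisfies $z^\top G z = \sum_{i<j}(-G_{ij})(z_i-z_j)^2 \ge 0$, hence $G \succeq 0$ and the LMI holds with $\varepsilon = 0$.

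For the general (possibly reducible) case I would reduce to a shifted matrix and invoke diagonal stability of M-matrices. Fix $\varepsilon > 0$ and set $N := M - (\alpha(M)+\varepsilon)I$. This matrix is Metzler and Hurwitz, because every eigenvalue of $N$ has real part at most $\alpha(M) - (\alpha(M)+\varepsilon) = -\varepsilon < 0$; equivalently $-N$ is a nonsingular M-matrix. By the classical fact that a nonsingular M-matrix admits a positive diagonal solution to its Lyapunov inequality, there exists $\xi \in \realpositive^n$ with $\diag(\xi)N + N^\top\diag(\xi) \prec 0$, which is exactly $\diag(\xi)M + M^\top\diag(\xi) \prec 2(\alpha(M)+\varepsilon)\diag(\xi)$ and implies the desired non-strict inequality.

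The main obstacle is the general case, and it is instructive to see why $\varepsilon$ cannot be removed there by a naive limiting argument. Approximating $M$ by irreducible Metzler matrices $M_\delta \to M$ and applying the first case produces weights $\xi^{(\delta)} = w^{(\delta)}\!\oslash v^{(\delta)}$, but in the limit the Perron eigenvectors of a reducible matrix may have vanishing entries, so $\xi^{(\delta)}$ can fail to converge to a strictly positive vector; moreover the coupling terms introduced by the perturbation $M_\delta - M$ are not sign-definite, so the limiting inequality cannot simply be pushed back onto $M$ with a positive $\xi$. This is exactly the role played by the M-matrix diagonal-stability theorem, which supplies a strictly positive $\xi$ at the cost of the strict shift $\varepsilon$, and which I would cite rather than reprove.
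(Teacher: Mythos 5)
The paper never proves this lemma: it is stated with a pointer to \cite[Lemma 3]{OP-MV:06} and used as an imported fact, so there is no in-paper argument to compare against; your proposal supplies a proof, and it is correct. In the irreducible case, the choice $\xi_i = w_i/v_i$ from the left and right Perron eigenvectors, followed by the congruence with $\diag(v)$, correctly reduces the LMI to showing that $2\alpha(M)\diag(v\circ w) - (B+B^\top)$ with $B = \diag(w)M\diag(v)$ is a weighted graph Laplacian: its off-diagonal entries are nonpositive by the Metzler property, and its row sums vanish by the eigenvector identities, hence it is positive semidefinite. One cosmetic slip: the quadratic-form identity $z^\top L z = \sum_{i<j}(-L_{ij})(z_i-z_j)^2$ applies to the congruenced matrix $\diag(v)\,G\,\diag(v)$, not to $G$ itself; $G \succeq 0$ then follows by congruence, which you should state explicitly rather than reusing the symbol $G$. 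In the reducible case, shifting to $N = M - (\alpha(M)+\varepsilon)I_n$ and invoking diagonal Lyapunov stability of nonsingular M-matrices (Araki, Barker--Berman--Plemmons) is legitimate and non-circular: that classical theorem concerns Hurwitz Metzler matrices, whereas the lemma here is the quantitative statement for arbitrary Metzler matrices, which your $\varepsilon$-shift reduces to it. Your closing observation about why a naive perturbation-and-limit argument cannot eliminate $\varepsilon$ is also accurate: the correction terms of the form $\xi\vectorones[n]^\top + \vectorones[n]\xi^\top$ arising from an irreducible perturbation are indefinite, so the inequality cannot be pushed back onto $M$ directly. The one structural difference from the cited source is that you split the two claims and handle them by different mechanisms (Perron eigenvector construction versus M-matrix theory); this buys a self-contained and explicit weight $\xi$ in the irreducible case, at the cost of citing the M-matrix diagonal-stability theorem, which carries most of the weight in the reducible case.
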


\begin{defn}[Diagonally weighted aggregation norm]
  For $\xi \in \realpositive^n$, define the $\xi$-weighted norm and
  corresponding weak pairing on $\R^N$ by
  \begin{align}
    \norm{ (x_1,\dots,x_n) }{\xi}^2 &= \sum\nolimits_{i=1}^n \xi_i \norm{x_i}{i}^2, \label{eq:diagnorm} \\
    \WSIP{(x_1,\dots,x_n)}{(y_1,\dots,y_n)}_{\xi} &= \sum\nolimits_{i=1}^n \xi_i \WSIP{x_i}{y_i}_i \label{eq:diagWSIP}.
  \end{align}
  \begin{arxiv}
    It is easy to see that \eqref{eq:diagnorm} defines a norm. We prove in
    Appendix~\ref{app:diagWSIP} that \eqref{eq:diagWSIP} is a weak pairing
    that is compatible with \eqref{eq:diagnorm} and satisfies Deimling's inequality,~\eqref{eq:DeimlingIneq} and the curve norm derivative formula,~\eqref{eq:curvenormderivative}.
  \end{arxiv}
  \begin{tac}
    It is straightforward to see that \eqref{eq:diagnorm} defines a norm
    and that~\eqref{eq:diagWSIP} defines a weak pairing that is compatible
    with~\eqref{eq:diagnorm}.
  \end{tac}
\end{defn}

\begin{theorem}[Contractivity of interconnected system]
  Consider the interconnection of continuous
  systems~\eqref{eq:interconnected} satisfying Assumptions~\ref{A:contract}
  and~\ref{A:contract:mixed} and define the \emph{gain matrix}
  \begin{equation*}
    \Gamma :=
    \begin{bmatrix}
      -c_1 &  \dots & \gamma_{1n}\\
      \vdots & & \vdots \\
      \gamma_{n1} & \dots & - c_n
    \end{bmatrix}.
  \end{equation*}
  If $\Gamma$ is Hurwitz, then
  \begin{enumerate}
  \item \label{thm:mct:1} for every $\varepsilon \in {]0,
    |\alpha(\Gamma)|[}$, there exists $\xi \in \realpositive^n$ such that
    the interconnected system is strongly contracting with respect to
    $\norm{\cdot}{\xi}$ in \eqref{eq:diagnorm} with rate $|\alpha(\Gamma) +
    \varepsilon|$, and
  \item if $\Gamma$ is irreducible, the result~\ref{thm:mct:1} holds with
    $\varepsilon = 0$.
  \end{enumerate}
\end{theorem}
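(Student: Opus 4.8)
The plan is to show that the aggregate vector field $F(t,x) := (f_1(t,x_1,x_{-1}),\dots,f_n(t,x_n,x_{-n}))$ satisfies the one-sided Lipschitz bound $\osL(F(t,\cdot)) \leq \alpha(\Gamma)+\varepsilon$ with respect to the diagonally weighted weak pairing $\WSIP{\cdot}{\cdot}_\xi$, and then read off strong contraction from the definition. First I would observe that $\Gamma$ is Metzler, since its off-diagonal entries are the nonnegative gains $\gamma_{ij}$ from Assumption~\ref{A:contract:mixed}; being also Hurwitz forces $\alpha(\Gamma) < 0$. Applying Lemma~\ref{lemma:diagonalweights} to $\Gamma$ then yields, for each $\varepsilon > 0$, a vector $\xi \in \realpositive^n$ satisfying the LMI $\diag(\xi)\Gamma + \Gamma^\top \diag(\xi) \preceq 2(\alpha(\Gamma)+\varepsilon)\diag(\xi)$.

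The core computation is to estimate $\WSIP{F(t,x) - F(t,y)}{x-y}_\xi = \sum_{i=1}^n \xi_i \WSIP{f_i(t,x_i,x_{-i}) - f_i(t,y_i,y_{-i})}{x_i-y_i}_i$ using the block structure of the weak pairing in~\eqref{eq:diagWSIP}. For each block $i$ I would insert the intermediate term $f_i(t,y_i,x_{-i})$ and apply subadditivity (property~\ref{WSIP1}) to split the pairing into a diagonal part $\WSIP{f_i(t,x_i,x_{-i}) - f_i(t,y_i,x_{-i})}{x_i-y_i}_i$ and a coupling part $\WSIP{f_i(t,y_i,x_{-i}) - f_i(t,y_i,y_{-i})}{x_i-y_i}_i$. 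Assumption~\ref{A:contract} bounds the diagonal part by $-c_i\norm{x_i-y_i}{i}^2$, while Cauchy-Schwarz (property~\ref{WSIP5}) followed by Assumption~\ref{A:contract:mixed} bounds the coupling part by $\big(\sum_{j\neq i}\gamma_{ij}\norm{x_j-y_j}{j}\big)\norm{x_i-y_i}{i}$.

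Writing $z_i := \norm{x_i-y_i}{i}\geq 0$ and $z := (z_1,\dots,z_n)^\top$, these estimates assemble exactly into $\WSIP{F(t,x)-F(t,y)}{x-y}_\xi \leq \sum_{i=1}^n \xi_i z_i (\Gamma z)_i = z^\top \diag(\xi)\Gamma z$, because $\Gamma_{ii}=-c_i$ and $\Gamma_{ij}=\gamma_{ij}$ for $j\neq i$ give $z_i(\Gamma z)_i = -c_i z_i^2 + \sum_{j\neq i}\gamma_{ij}z_iz_j$. I would then symmetrize via $z^\top \diag(\xi)\Gamma z = \tfrac{1}{2} z^\top(\diag(\xi)\Gamma + \Gamma^\top\diag(\xi)) z$ and apply the LMI to obtain $z^\top \diag(\xi)\Gamma z \leq (\alpha(\Gamma)+\varepsilon)\, z^\top \diag(\xi) z = (\alpha(\Gamma)+\varepsilon)\norm{x-y}{\xi}^2$, the last equality being~\eqref{eq:diagnorm}. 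This establishes $\osL(F(t,\cdot)) \leq \alpha(\Gamma)+\varepsilon$; choosing $\varepsilon \in {]0,|\alpha(\Gamma)|[}$ makes the bound negative, which is precisely strong contraction with rate $|\alpha(\Gamma)+\varepsilon|$ relative to $\norm{\cdot}{\xi}$ (with the aggregation pairing satisfying Deimling's inequality and the curve norm derivative formula, as established for~\eqref{eq:diagWSIP}, so that $\osL$ is well-defined and Theorem~\ref{thm:generalization} supplies the trajectory-level exponential estimates). For part~(ii), irreducibility of $\Gamma$ lets Lemma~\ref{lemma:diagonalweights} supply the LMI with $\varepsilon=0$, and the identical argument yields rate $|\alpha(\Gamma)|$.

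The main obstacle is the bookkeeping in the coupling term: getting the mixed products $\gamma_{ij}z_iz_j$ to reassemble \emph{precisely} into $z^\top\diag(\xi)\Gamma z$ so that the Metzler diagonal-stability LMI applies, and making sure the sign conventions in Assumptions~\ref{A:contract}--\ref{A:contract:mixed} line up with the entries of $\Gamma$. Everything else is an application of the weak pairing axioms and the diagonal weighting lemma.
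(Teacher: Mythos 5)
Your proposal is correct and follows essentially the same route as the paper's proof: the same insertion of the intermediate term $f_i(t,y_i,x_{-i})$ with subadditivity and Cauchy--Schwarz, the same assembly of the bounds into the quadratic form $z^\top\diag(\xi)\Gamma z$, symmetrization, the diagonal-weighting LMI from Lemma~\ref{lemma:diagonalweights}, and the final appeal to the one-sided Lipschitz characterization via Theorem~\ref{thm:generalization}. No substantive differences to report.
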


%Note: For $n=2$ and $\gamma_{12},\gamma_{21}>0$, diagonal stability of the
%gain matrix $\Gamma$ is equivalent~\cite[Lemma~3.2]{JWSP-FB:12za} to the
%small gain condition $\gamma_{12}\gamma_{21}<c_1c_2$.

\begin{proof}
  For $i\in\until{n}$, Assumptions~\ref{A:contract}
  and~\ref{A:contract:mixed} imply
  \begin{align*}
    &\WSIP{f_i (t,x_i,x_{-i}) -  f_i (t,y_i,y_{-i})}{x_i-y_i}_i \\ 
    &\qquad \leq
    \WSIP{f_i (t,x_i,x_{-i}) -  f_i (t,y_i,x_{-i})}{x_i-y_i}_i \\ &\qquad \quad +
    \WSIP{f_i (t,y_i,x_{-i}) -  f_i (t,y_i,y_{-i})}{x_i-y_i}_i
    \\
    &\qquad \leq
    - c_i \norm{x_i-y_i}{i}^2 + \sum\nolimits_{j=1,j\neq{i}}^n \gamma_{ij} \norm{x_j-y_j}{j} \norm{x_i-y_i}{i} ,
  \end{align*}
  where we used the subadditivity and Cauchy-Schwarz inequality for the
  weak pairing. By Lemma~\ref{lemma:diagonalweights}, for $\varepsilon \in
  {]0,|\alpha(\Gamma)|[}$, select $\xi \in \realpositive^n$
  satisfying~\eqref{eq:diagonalstability}.  Next, we check the one-sided
  Lipschitz condition for the interconnected system on $\real^N$ with
  respect to norm~\eqref{eq:diagnorm} and weak pairing~\eqref{eq:diagWSIP}:
  \begin{align*}
    &\sum\nolimits_{i=1}^n \xi_i
    \WSIP{f_i (t,x_i,x_{-i})   -  f_i (t,y_i,y_{-i})}{x_i-y_i}_i \\
    & \leq 
    - \sum\nolimits_{i=1}^n \xi_i c_i \norm{x_i-y_i}{i}^2 \\& \qquad \quad +
    \sum\nolimits_{i,j=1,j\neq{i}}^n \xi_i \gamma_{ij} \norm{ x_j -  y_j}{j} \norm{x_i-y_i}{i} \\
    & = 
    \begin{bmatrix}
      \norm{ x_1 -  y_1}{1} \\
      \vdots \\
      \norm{ x_n -  y_n}{n}
    \end{bmatrix}^\top  \diag(\xi) \Gamma
      \begin{bmatrix}
      \norm{ x_1 -  y_1}{1} \\
      \vdots \\
      \norm{ x_n -  y_n}{n}
      \end{bmatrix} \\
    &  = 
    \begin{bmatrix}
      \norm{ x_1 -  y_1}{1} \\
      \vdots \\
      \norm{ x_n -  y_n}{n}
    \end{bmatrix}^\top
    \frac{\diag(\xi) \Gamma + \Gamma^\top \diag(\xi)}{2}
      \begin{bmatrix}
      \norm{ x_1 -  y_1}{1} \\
      \vdots \\
      \norm{ x_n -  y_n}{n}
      \end{bmatrix},
  \end{align*}
  so that the interconnected system is strongly contracting if the gain
  matrix $\Gamma$ is diagonally stable. Moreover, using Lemma~\ref{lemma:diagonalweights}
  \begin{align*}      \sum\nolimits_{i=1}^n & \xi_i \WSIP{f_i (t,x_i,x_{-i}) - f_i
      (t,y_i,y_{-i})}{x_i-y_i}_i \\ & \leq (\alpha(\Gamma)+\varepsilon)
    \begin{bmatrix}
        \norm{ x_1 -  y_1}{1} \\
        \vdots \\
        \norm{ x_n -  y_n}{n}
      \end{bmatrix}^\top  \diag(\xi)      \begin{bmatrix}
        \norm{ x_1 -  y_1}{1} \\
        \vdots \\
        \norm{ x_n -  y_n}{n}
      \end{bmatrix} \\
      & = (\alpha(\Gamma)+\varepsilon) \sum\nolimits_{i=1}^n \xi_i \norm{ x_i -  y_i}{i}^2
      \\&  = (\alpha(\Gamma)+\varepsilon) \norm{ (x_1-y_1,\dots,x_n-y_n)}{\xi}^2.
  \end{align*}
  Then by Theorem~\ref{thm:generalization}, we have strong contraction with
  rate $|\alpha(\Gamma) + \varepsilon|$ and incremental exponential
  stability. Finally, if $\Gamma$ is irreducible, we can take $\varepsilon
  = 0$ by Lemma~\ref{lemma:diagonalweights}.
\end{proof}

An example interconnected system satisfying the
Assumptions~\ref{A:contract} and~\ref{A:contract:mixed} is of the form
$f_i(t,x_i,x_{-i}) = g_i(t,x_i) + \sum_{j=1,j\neq{i}}^n H_{ij} x_j$, where
each vector field $g_i(t,x_i)$ has one-sided Lipschitz constant $-c_i$ and
where Assumption~\ref{A:contract:mixed} is satisfied with $\gamma_{ij}$
equal to the induced gain of $H_{ij}$.

\begin{remark}[Input-to-state stability and gain of interconnected contracting systems]
  Consider interconnected subsystems of the form $\dot{x}_i =
  f(t,x_i,x_{-i},u_i)$ with an input $u_i \in \real^{k_i}$. Assume each
  $f_i$ satisfies Assumptions~\ref{A:contract} and~\ref{A:contract:mixed}
  at fixed input and, for fixed $x_i,x_{-i},t$ and all $u_i,v_i \in
  \real^{k_i}$,
  $$\|f(t,x_i,x_{-i},u_i) - f(t,x_i,x_{-i},v_i)\|_i \leq \ell_i\|u_i -
  v_i\|_{\mcU_i},$$ for some norm $\|\cdot\|_{\mcU_i}$ on $\real^{k_i}$. Then, with
  $c = |\alpha(\Gamma) + \varepsilon|$, Theorem~\ref{thm:ISS-contracting}
  shows that the interconnected system is incrementally input-to-state
  stable with
 \begin{align*}
   &\|x(t) - y(t)\|_{\xi} \leq e^{-ct}\|x_0 - y_0\|_{\xi} \\& \qquad \quad
   +\frac{1 - e^{-ct}}{c}\sum\nolimits_{i = 1}^n \ell_i\xi_i\sup_{\tau\in[0,t]}
   \|u_{x,i}(\tau) - u_{y,i}(\tau)\|_{\mcU_i},
 \end{align*}
 and has finite incremental $\mathcal{L}_{\mcX,\mcU}^q$
 gain, for any $q\in[1,\infty]$. \oprocend
\end{remark}

\section{Conclusions}\label{conclusions}
This paper presents weak pairings as a novel tool to study contraction
theory with respect to arbitrary norms. Through the language of weak
pairings, we prove contraction equivalences for continuously differentiable
vector fields, continuous vector fields, and for equilibrium contraction.
For $\ell_p$ norms with $p\in[1,\infty]$, we present explicit formulas for
the \textcolor{black}{log norms}, the Demidovich condition, and the one-sided Lipschitz
condition, leading to novel contraction equivalences for
$p\in\{1,\infty\}$.
We then prove novel robustness results for contracting and equilibrium
contracting systems including incremental input-to-state stability
properties as well as finite incremental $\mathcal{L}_{\mcX,\mcU}^q$
gain. Finally, we provide a main interconnection theorem for contracting
subsystems, that provides a counterpart to similar theorems for dissipative
subsystems.

Possible directions for future research include (i)
leveraging our non-Euclidean conditions for control design akin to control
contraction metrics~\cite{IRM-JJES:17}, (ii) studying the generalization to
nonsmooth Finsler Lyapunov functions and differentially positive systems
\cite{FF-RS:14,FF-RS:16}, (iii) exploring the additional structure 
in monotone systems~\cite{SC:19}, and finally (iv) studying generalizations
of contraction including partial contraction~\cite{WW-JJES:05},
transverse contraction~\cite{IRM-JJES:14}, and contraction after
transients~\cite{MM-EDS-TT:16}.

\section{Acknowledgments}
\textcolor{black}{The authors wish to thank Zahra Aminzare, Bassam Bamieh,
  Ian Manchester, and Anton Proskurnikov for stimulating conversations
  about contraction theory and systems theory.}

%\clearpage
\appendices
%% {\cite{RAT:73}}
\section{Deimling pairings}\label{app:Deimling}
\begin{defn}[Deimling pairing {\cite[Chapter 3]{KD:85}}] \label{def:Deimling}
  Given a norm $\|\cdot\|$ on $\R^n$, the Deimling pairing is the map
  $\map{(\cdot,\cdot)_+}{\real^n \times \real^n}{\real}$ defined by
  \begin{align}
    (x,y)_+ &:= \|y\|\lim_{h \to 0^+} \frac{\|y + hx\| - \|y\|}{h}.
  \end{align}
  This limit is known to exist for every $x,y \in \R^n$.
\end{defn}
If a norm is differentiable, then its associated Lumer pairing coincides
with the Deimling pairing.  The Deimling pairing is also referred to as
superior semi-inner product and right semi-inner product, \cite[Chapter
  3]{SSD:04}, \cite[Remark 1]{ZA-EDS:14b}.

\begin{lemma}[Deimling  pairing properties {\cite[Chapter 3, Proposition 5 and Corollary 5]{SSD:04}}]
	Let $\|\cdot\|$ be a norm on $\R^n$. Then the following properties hold:
	\begin{enumerate}
		\item $(x_1 + x_2, y)_+ \leq (x_1,y)_+ + (x_2,y)_+$ for all $x_1,x_2,y \in \R^n$ and $(\cdot,\cdot)_+$ is continuous in its first argument,
		\item $(\alpha x, y)_+ = (x, \alpha y)_+ = \alpha(x,y)_+$ and $(-x,-y)_+ = (x,y)_+$ for all $x,y \in \R^n, \alpha \geq 0$,
		\item $(x,x)_+ = \|x\|^2$ for all $x \in \R^n$,
		\item $|(x,y)_+| \leq \|x\|\|y\|$ for all $x, y \in \R^n$.
	\end{enumerate}
\end{lemma}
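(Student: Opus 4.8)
The plan is to work throughout with the one-sided directional derivative of the norm, namely $\phi_y(x) := \lim_{h\to 0^+}(\|y+hx\|-\|y\|)/h$, so that $(x,y)_+ = \|y\|\,\phi_y(x)$. Since the norm is convex, the difference quotient $h\mapsto(\|y+hx\|-\|y\|)/h$ is monotone nondecreasing, which is the standard reason the limit exists; I take existence as given per the definition. Each of the four properties then reduces to a corresponding property of $\phi_y$, with the nonnegative factor $\|y\|$ reinstated at the end. The case $y=\vectorzeros[n]$ is trivial throughout, since then $(x,y)_+=0$ for every $x$, so I may assume $y\neq\vectorzeros[n]$.

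First I would dispatch the two algebraic properties. For property (iii), taking the direction equal to $x$ gives $\|x+hx\|=(1+h)\|x\|$, so the difference quotient is identically $\|x\|$ and hence $(x,x)_+=\|x\|^2$. For property (ii), the identity $(\alpha x,y)_+=\alpha(x,y)_+$ for $\alpha\geq 0$ follows by rescaling the increment $h$, and $(x,\alpha y)_+=\alpha(x,y)_+$ follows similarly together with $\|\alpha y\|=\alpha\|y\|$ (valid since $\alpha\geq0$); finally $(-x,-y)_+=(x,y)_+$ is immediate from $\|-z\|=\|z\|$.

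Next I would establish the single bound underlying both property (iv) and the continuity half of property (i). The triangle inequality $\|y+hx\|\leq\|y\|+h\|x\|$ and its reverse give $-\|x\|\leq(\|y+hx\|-\|y\|)/h\leq\|x\|$ for $h>0$, hence $|\phi_y(x)|\leq\|x\|$; multiplying by $\|y\|$ yields property (iv).

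The only real work is property (i), and this is the step I expect to be the main obstacle. Subadditivity is where convexity of the norm is used: writing $y+h(x_1+x_2)=\tfrac12(y+2hx_1)+\tfrac12(y+2hx_2)$ and applying convexity gives $\|y+h(x_1+x_2)\|\leq\tfrac12\|y+2hx_1\|+\tfrac12\|y+2hx_2\|$; dividing by $h$, recognizing the right-hand side as difference quotients in the increment $2h$, and letting $h\to0^+$ yields $\phi_y(x_1+x_2)\leq\phi_y(x_1)+\phi_y(x_2)$, i.e. the claimed subadditivity. Continuity in the first argument then comes for free: subadditivity combined with the bound $\phi_y(z)\leq\|z\|$ gives $\phi_y(x_1)-\phi_y(x_2)\leq\phi_y(x_1-x_2)\leq\|x_1-x_2\|$, and symmetrically, so $\phi_y$ is $1$-Lipschitz and therefore $(x,y)_+=\|y\|\,\phi_y(x)$ is continuous in $x$. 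The delicate point is the midpoint-convexity rescaling in the subadditivity argument, where the increment must be tracked as $2h$ so that the limits on the right recombine correctly; the remaining properties are routine.
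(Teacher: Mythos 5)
Your proof is correct. Note first that the paper itself gives no proof of this lemma: it is stated as a citation to Dragomir's monograph \cite{SSD:04}, so there is no ``paper route'' to compare against; your argument is a self-contained elementary substitute for that reference. The key steps all check out. The reduction to the directional derivative $\phi_y(x)=\lim_{h\to0^+}(\|y+hx\|-\|y\|)/h$ with $(x,y)_+=\|y\|\phi_y(x)$ is exactly the right normalization, and dispatching $y=\vectorzeros[n]$ separately is needed since then $(x,y)_+\equiv 0$ and nothing else applies. Property (iii) via $\|x+hx\|=(1+h)\|x\|$, property (ii) via rescaling of the increment ($h\mapsto h\alpha$ in the first argument, $h\mapsto h/\alpha$ in the second, with $\alpha=0$ trivial), and property (iv) via the two-sided triangle-inequality bound $|\phi_y(x)|\leq\|x\|$ are all standard and correctly executed. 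The one genuinely nontrivial step, subadditivity, is also right: the midpoint identity $y+h(x_1+x_2)=\tfrac12(y+2hx_1)+\tfrac12(y+2hx_2)$ together with the triangle inequality gives
\begin{equation*}
\frac{\|y+h(x_1+x_2)\|-\|y\|}{h}\;\leq\;\frac{\|y+2hx_1\|-\|y\|}{2h}+\frac{\|y+2hx_2\|-\|y\|}{2h},
\end{equation*}
and since $2h\to0^+$ as $h\to0^+$, both terms on the right converge to the corresponding $\phi_y(x_i)$; you correctly flag that tracking the increment as $2h$ is the delicate point. Deducing continuity from subadditivity plus the bound $\phi_y(z)\leq\|z\|$ (so that $\phi_y$ is $1$-Lipschitz, hence $(\cdot,y)_+$ is $\|y\|$-Lipschitz) is clean and in fact gives slightly more than the stated continuity. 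The only cosmetic remark: where you invoke ``convexity of the norm'' for the midpoint inequality, this is just the triangle inequality plus homogeneity, so no separate convexity argument is needed; and your appeal to monotonicity of the difference quotient for existence of the limit is consistent with the paper's Definition~\ref{def:Deimling}, which asserts existence outright.
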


While a Deimling pairing need not be a Lumer pairing and a Lumer pairing
need not be a Deimling pairing, both Deimling pairings and Lumer pairings
are weak pairings.

\begin{lemma}[Relationship between Deimling pairing and Lumer pairings for a norm {\cite[Chapter 3, Theorem 20]{SSD:04}}] \label{SupInfPairings}
	Let $\|\cdot\|$ be a norm on $\R^n$ and let $[\cdot,\cdot]$ be a
	compatible Lumer pairing. Then
	\begin{equation}
	[x,y] \leq (x,y)_+, \quad \text{for all } x,y \in \R^n.
	\end{equation}
	Moreover, if $\subscr{\mathcal{S}}{p}$ is the set of all Lumer pairings
	compatible with the norm, then $(x,y)_+ = \sup_{[\cdot,\cdot] \in
		\subscr{\mathcal{S}}{p}} [x,y]$ for all $x,y \in \R^n$.
\end{lemma}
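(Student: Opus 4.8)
The plan is to prove the two claims in turn: first the pointwise inequality $[x,y]\leq(x,y)_+$, and then upgrade it to the supremum identity by an explicit construction.

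For the inequality $[x,y] \leq (x,y)_+$, I would exploit the bilinearity of the Lumer pairing together with its Cauchy--Schwarz property. Fix $x,y\in\real^n$; the case $y=\vectorzeros[n]$ is immediate since both sides vanish, so assume $y\neq\vectorzeros[n]$. For $h>0$, additivity and homogeneity in the first argument (properties~\ref{SIP1}--\ref{SIP2}) give $[y+hx,y] = [y,y] + h[x,y] = \|y\|^2 + h[x,y]$, while the Cauchy--Schwarz inequality (property~\ref{SIP4}) and compatibility yield $[y+hx,y] \leq \|y+hx\|\,\|y\|$. Combining these and rearranging produces $[x,y] \leq \|y\|\,(\|y+hx\|-\|y\|)/h$ for every $h>0$; letting $h\to 0^+$ gives exactly $[x,y]\leq (x,y)_+$. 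This is the routine half.

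Taking the supremum over all compatible Lumer pairings $[\cdot,\cdot]\in\subscr{\mathcal{S}}{p}$ in the inequality just proved gives $\sup_{[\cdot,\cdot]\in\subscr{\mathcal{S}}{p}}[x,y] \leq (x,y)_+$, so the content of the second claim is the reverse inequality, and I would argue that the bound is attained. For fixed $x,y$ with $y\neq\vectorzeros[n]$, I would produce a single compatible Lumer pairing with $[x,y]=(x,y)_+$. The key observation is that $(x,y)_+/\|y\|$ is the one-sided directional derivative of the (convex) norm at $y$ in direction $x$, and by the standard convex-analytic formula this equals $\max\{f^\top x : f\in\partial\|y\|\}$, where the subdifferential $\partial\|y\| = \setdef{f\in\real^n}{\norm{f}{\ast}=1,\ f^\top y = \|y\|}$ is exactly the set of supporting functionals at $y$ (identifying the dual with $\real^n$). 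Choosing a maximizer $f^\ast$ and setting $J(y):=\|y\|f^\ast$ gives $J(y)^\top x = \|y\|(f^\ast)^\top x = (x,y)_+$.

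It then remains to extend this single choice to a full compatible Lumer pairing, which I expect to be the main obstacle. The construction is a homogeneous selection of the duality map: partitioning $\real^n\setminus\{\vectorzeros[n]\}$ into lines through the origin, I would pick for each line a supporting functional (taking $f^\ast$ on the line through $y$) and define $J$ on that line by $J(\lambda z) := \lambda\|z\|f_z$, so that $J(\lambda z)^\top(\lambda z) = \lambda^2\|z\|^2 = \|\lambda z\|^2$ and $\norm{J(\lambda z)}{\ast} = \|\lambda z\|$ for all real $\lambda$. Setting $[x,y]:=J(y)^\top x$, additivity and homogeneity in the first argument are automatic from linearity of $v\mapsto J(y)^\top v$, homogeneity in the second argument holds by the homogeneous construction of $J$, positive definiteness follows from $[x,x]=J(x)^\top x = \|x\|^2>0$, and the Cauchy--Schwarz inequality follows from $|[x,y]| = |J(y)^\top x| \leq \norm{J(y)}{\ast}\|x\| = \|y\|\,\|x\| = [x,x]^{1/2}[y,y]^{1/2}$. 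Thus $[\cdot,\cdot]\in\subscr{\mathcal{S}}{p}$ and $[x,y]=(x,y)_+$, which forces the supremum to equal $(x,y)_+$. The delicate points to get right are the convex-analytic identification of the directional derivative with the maximum over the subdifferential and the verification that an arbitrary line-wise selection of supporting functionals indeed yields a valid Lumer pairing.
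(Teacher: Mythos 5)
The paper does not actually prove this lemma: it is imported verbatim from Dragomir's book \cite[Chapter~3, Theorem~20]{SSD:04}, so there is no in-paper proof to compare against. Judged on its own, your proof is correct, and it is essentially the standard argument behind the cited result. The first half is exactly right: $[y+hx,y]=\|y\|^2+h[x,y]$ by additivity and homogeneity in the first argument, Cauchy--Schwarz plus compatibility gives $[y+hx,y]\leq\|y+hx\|\,\|y\|$, and dividing by $h>0$ and letting $h\to0^+$ (the limit exists by convexity of the norm) yields $[x,y]\leq(x,y)_+$; the degenerate case $y=\vectorzeros[n]$ is handled correctly since homogeneity forces $[x,\vectorzeros[n]]=0$. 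The second half is also sound: the identity $(x,y)_+=\|y\|\max\setdef{f^\top x}{f\in\partial\|y\|}$ is the standard support-function formula for one-sided directional derivatives of a finite convex function, the subdifferential of the norm at $y\neq\vectorzeros[n]$ is nonempty and compact so the maximizer $f^\ast$ exists, and your line-wise (crucially, not ray-wise) selection $J(\lambda z)=\lambda\|z\|f_z$ gives $J(\alpha y)=\alpha J(y)$ for \emph{all} real $\alpha$, which is what delivers homogeneity in the second argument including negative scalars; the remaining Lumer axioms follow exactly as you verify them, since the paper's Definition~\ref{def:SIP} does not demand any continuity. Two small points worth making explicit if this were written out: (i) the subdifferential of the norm is constant along open rays, so $f^\ast\in\partial\|y\|$ is indeed a valid supporting functional for whatever representative $z$ of the line through $y$ you use, and (ii) the selection over all other lines invokes the axiom of choice (or one can simply fix any supporting functional per line, which exist by Hahn--Banach); neither affects correctness. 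Your construction is precisely the homogeneous selection of the duality map (Giles' construction), which is how Dragomir proves the theorem, so your proof not only closes the gap but shows the supremum is attained, a slightly stronger statement than the lemma claims.
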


\begin{lemma}[Deimling curve norm derivative formula~{\cite[Proposition 13.1]{KD:85}}] \label{lemma:DeimlingDerivative}
	Let $x: {]a,b[} \to \R^n$ be differentiable. Then
	\begin{equation}
	\|x(t)\|D^+\|x(t)\| = (\dot{x}(t),x(t))_+, \quad \text{for all } t \in {]a,b[}.
	\end{equation}
\end{lemma}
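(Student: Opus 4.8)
The plan is to show that the upper Dini derivative $D^+\|x(t)\|$, which is \emph{a priori} only a $\limsup$, in fact coincides with the genuine one-sided limit $\lim_{h\to 0^+}\bigl(\|x(t)+h\dot{x}(t)\|-\|x(t)\|\bigr)/h$; multiplying by $\|x(t)\|$ then reproduces $(\dot{x}(t),x(t))_+$ directly by the definition of the Deimling pairing in Definition~\ref{def:Deimling}. The engine of the argument is the replacement of the curve $x$ by its tangent line at $t$, justified by differentiability.

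First I would use differentiability of $x$ at $t$ to write $x(t+h)=x(t)+h\dot{x}(t)+o(h)$ as $h\to 0^+$. Applying the reverse triangle inequality to the vectors $x(t+h)$ and $x(t)+h\dot{x}(t)$ gives
\begin{equation*}
  \bigl|\,\|x(t+h)\|-\|x(t)+h\dot{x}(t)\|\,\bigr|\le \bigl\|x(t+h)-x(t)-h\dot{x}(t)\bigr\|=o(h),
\end{equation*}
so that $\|x(t+h)\|=\|x(t)+h\dot{x}(t)\|+o(h)$. Subtracting $\|x(t)\|$ and dividing by $h>0$ yields
\begin{equation*}
  \frac{\|x(t+h)\|-\|x(t)\|}{h}=\frac{\|x(t)+h\dot{x}(t)\|-\|x(t)\|}{h}+\frac{o(h)}{h}.
\end{equation*}
The first term on the right has a genuine limit as $h\to 0^+$: this is exactly the one-sided directional derivative of the norm appearing in Definition~\ref{def:Deimling}, whose existence is guaranteed there (it follows from convexity of the norm, which makes the difference quotient monotone in $h$). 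Since $o(h)/h\to 0$, the left-hand difference quotient converges to the same value, so its $\limsup$ equals that limit, i.e.
\begin{equation*}
  D^+\|x(t)\|=\lim_{h\to 0^+}\frac{\|x(t)+h\dot{x}(t)\|-\|x(t)\|}{h}.
\end{equation*}
Multiplying through by $\|x(t)\|$ and comparing with Definition~\ref{def:Deimling} gives $\|x(t)\|D^+\|x(t)\|=(\dot{x}(t),x(t))_+$, as required. No separate treatment of the case $x(t)=\vectorzeros[n]$ is needed: there both sides vanish, since $(\dot{x}(t),\vectorzeros[n])_+=0$ while the computation above specializes to $D^+\|x(t)\|=\|\dot{x}(t)\|$, which is finite, so the product with $\|x(t)\|=0$ is zero.

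The only delicate point, and the main (mild) obstacle, is the passage from the $\limsup$ defining $D^+$ to a true limit. This is legitimate precisely because the tangent-line quotient is known to converge, so the curve quotient differs from a convergent quantity by a term tending to zero; there is no need to argue monotonicity of the curve quotient itself. Everything else reduces to the reverse triangle inequality combined with the first-order Taylor expansion of the differentiable curve.
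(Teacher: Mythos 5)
Your proof is correct: the paper itself offers no proof of this lemma (it simply cites \cite[Proposition~13.1]{KL:85}), and your argument --- using the first-order expansion $x(t+h)=x(t)+h\dot{x}(t)+o(h)$ with the reverse triangle inequality to show the Dini $\limsup$ coincides with the norm's one-sided directional derivative, whose existence follows from convexity, then multiplying by $\|x(t)\|$ to recover the Deimling pairing by definition --- is precisely the standard argument behind the cited result. Both the $\limsup$-to-limit passage and the $x(t)=\vectorzeros[n]$ degenerate case are handled soundly, so the proposal is a complete, self-contained proof.
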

Hence, any Deimling pairing satisfies Deimling's
inequality,~\eqref{eq:DeimlingIneq}, (by definition) and the curve norm
derivative formula,~\eqref{eq:curvenormderivative}, (with equality holding
for all time).

\begin{remark}[Logarithmic Lipschitz constant]
  In~\cite[Definition~5.2]{GS:06}, the least upper bound logarithmic
  Lipschitz constant of a map $\map{f}{C}{\real^n}$ is defined by
  \begin{equation}
    M^+(f) := \sup_{x \neq y} \frac{(f(x)-f(y),x-y)_+}{\|x - y\|^2}.
  \end{equation}
  In other words, $M^+(f)$ is a special case of $\osL(f)$, where $\osL$ may
  be with respect to any weak pairing satisfying Deimling's
  inequality,~\eqref{eq:DeimlingIneq}. Thus, we show that, in contraction
  theory, we are not restricted to using Deimling pairings for
  analysis. \oprocend
\end{remark}

Finally, for comparison's sake, we report from
\cite[Example~13.1(b)]{KD:85}, the Deimling pairing for the $\ell_1$ norm:
\begin{equation}\label{eq:ell1Deimling}
(x,y)_{+,1} = \|y\|_{1} \big(\sign(y)^\top x + \sum\nolimits_{i = 1}^n|x_i|\chi_{\{0\}}(y_i)\big).
\end{equation}
%Finally, we prove a small result which, to the best of our knowledge, is
%novel and will be useful later.
%\begin{lemma}[Lumer's lemma for Deimling  pairings] \label{lemma:lumerdeimling}
%  Given a norm $\|\cdot\|$, the matrix measure of $A \in \R^{n \times n}$
%  satisfies
%  \begin{equation}
%    \mu(A) = \sup_{\|x\| = 1} (Ax,x)_+ = \sup_{x \neq \vectorzeros[n]} \frac{(Ax,x)_+}{\|x\|^2}.
%  \end{equation}
%\end{lemma}
%\begin{proof}
%	By Lemma~\ref{lemma:Lumer}, for any SIP compatible with $\|\cdot\|$, we have $[Ax,x]/\|x\|^2 \leq \mu(A)$. Taking the $\sup$ over all $[\cdot,\cdot] \in \subscr{\mathcal{S}}{p}$, we get $(Ax,x)_+/\|x\|^2 \leq \mu(A)$ by Lemma~\ref{SupInfPairings}. Then taking the $\sup$ over all $x \in \real^n$ with $\|x\| = 1$ proves the result.
%\end{proof}

\section{Proof of Lemma \ref{lemma:inftyProperties}} \label{A:InftyProof}
Before we prove Lemma \ref{lemma:inftyProperties}, we first define a
class of Lumer pairings called
\emph{single-index pairings}.

\begin{lemma}[Single-index pairings]\label{lemma:singleindex}
	For $R \in \real^{n \times n}$ invertible, let $\|\cdot\|_{\infty,R}$ be
	the weighted $\ell_{\infty}$ norm. Let $2^{[n]}$ be the power set of $\until{n}$. A choice function on $\until{n}$, $f: 2^{[n]} \setminus \{\emptyset\} \to \until{n}$ satisfies  for all $S \in 2^{[n]} \setminus \{\emptyset\}$, $f(S) \in S$. Let $\subscr{\mathcal{F}}{choice}$ be the set of all choice functions on $\until{n}$. Then each $f \in \subscr{\mathcal{F}}{choice}$ defines a Lumer pairing uniquely:
	\begin{equation}
	[x,y]_{\infty,R} := (Rx)_{f(I_{\infty}(Ry))}(Ry)_{f(I_{\infty}(Ry))}.
	\end{equation}
	Moreover, each Lumer pairing is compatible with the weighted $\ell_\infty$ norm. %If we define $\subscr{\mathcal{S}}{index}$ to be the set of all single-index SIPs, then we have a natural bijection between $\subscr{\mathcal{F}}{choice}$ and $\subscr{\mathcal{S}}{index}$.
\end{lemma}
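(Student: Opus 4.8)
The plan is to fix an arbitrary choice function $f \in \subscr{\mathcal{F}}{choice}$, abbreviate $j = j(y) := f(I_\infty(Ry))$, and verify the four defining properties of a Lumer pairing in Definition~\ref{def:SIP} directly for the map $[x,y]_{\infty,R} = (Rx)_j(Ry)_j$. The key observation driving every step is that $j$ depends on $y$ alone (indeed only through $I_\infty(Ry)$), and that since $f$ returns an index lying in $I_\infty(Ry)$, one always has $|(Ry)_j| = \|Ry\|_\infty = \|y\|_{\infty,R}$. This single-index selection, as opposed to the maximization used in the max pairing, is exactly what will upgrade subadditivity to genuine additivity.

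With $y$ (hence $j$) held fixed, the map $x \mapsto (Rx)_j(Ry)_j$ is linear in $x$ by linearity of $R$, so additivity $[x_1+x_2,y]_{\infty,R} = [x_1,y]_{\infty,R} + [x_2,y]_{\infty,R}$ and homogeneity $[\alpha x,y]_{\infty,R} = \alpha[x,y]_{\infty,R}$ in the first argument are immediate for every $\alpha \in \R$. Positive definiteness and compatibility with the norm I would establish together: $[x,x]_{\infty,R} = (Rx)_{f(I_\infty(Rx))}^2 = \|x\|_{\infty,R}^2$, because the selected index attains the max, and this is strictly positive for $x \neq \vectorzeros[n]$ since $R$ is invertible. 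The Cauchy-Schwarz inequality then follows from $|[x,y]_{\infty,R}| = |(Rx)_j|\,|(Ry)_j| \leq \|Rx\|_\infty \|y\|_{\infty,R} = [x,x]_{\infty,R}^{1/2}[y,y]_{\infty,R}^{1/2}$, bounding $|(Rx)_j|$ by $\|Rx\|_\infty$ for the single index $j$ and using $|(Ry)_j| = \|y\|_{\infty,R}$.

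The step I expect to require the most care is homogeneity in the second argument, $[x,\alpha y]_{\infty,R} = \alpha[x,y]_{\infty,R}$, since scaling $y$ alters the relevant index from $j(y)$ to $f(I_\infty(\alpha Ry))$ and I must confirm the selection is unchanged. For $\alpha \neq 0$ the absolute values of the entries of $Ry$ are scaled uniformly, so $I_\infty(\alpha Ry) = I_\infty(Ry)$ and thus $j$ is unchanged, yielding $[x,\alpha y]_{\infty,R} = (Rx)_j(\alpha Ry)_j = \alpha[x,y]_{\infty,R}$. The case $\alpha = 0$ I would handle separately: there $I_\infty(\vectorzeros[n]) = \until{n}$, but the factor $(R\vectorzeros[n])_{f(\until{n})} = 0$ forces $[x,\vectorzeros[n]]_{\infty,R} = 0 = 0\cdot[x,y]_{\infty,R}$. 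Since $f$ was an arbitrary choice function, each determines exactly one such pairing via the formula, and the compatibility asserted in the lemma is precisely the positive-definiteness computation above, which completes the argument.
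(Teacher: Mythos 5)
Your proposal is correct and follows essentially the same route as the paper's proof: a direct verification of the four Lumer-pairing axioms of Definition~\ref{def:SIP}, using that the selected index $j = f(I_\infty(Ry))$ depends only on $y$ and satisfies $|(Ry)_j| = \|y\|_{\infty,R}$. You are in fact slightly more careful than the paper on second-argument homogeneity, where you explicitly note $I_\infty(\alpha Ry) = I_\infty(Ry)$ for $\alpha \neq 0$ and treat $\alpha = 0$ separately — details the paper's proof passes over silently.
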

\begin{proof}
	First, we prove that any choice function defines a Lumer pairing. Let $f \in \subscr{\mathcal{F}}{choice}.$ Regarding property \ref{SIP1}, let $x_1, x_2, y \in \R^n$. 
	\begin{align*}
	[x_1 + x_2, y]_{\infty,R} &= R(x_1 + x_2)_{f(I_\infty(Ry))}(Ry)_{f(I_\infty(Ry))}\\
	&= [x_1,y]_{\infty,R} + [x_2,y]_{\infty,R}.
	\end{align*}
	For property \ref{SIP2}, let $\alpha \in \R$. Then 
	\begin{align*}
	[\alpha x, y]_{\infty,R} &= (R\alpha x)_{f(I_\infty(Ry))}(Ry)_{f(I_\infty(Ry))} = \alpha[x, y]_{\infty,R}.\\
	[x, \alpha y]_{\infty,R} &= (Rx)_{f(I_\infty(R\alpha y))}(R\alpha y)_{f(I_\infty(R\alpha y))} = \alpha[x, y]_{\infty,R}.
	\end{align*}
	Regarding property \ref{SIP3}: 
	\begin{align*}
	[x,x]_{\infty,R} &= (Rx)_{f(I_\infty(Rx))}(Rx)_{f(I_\infty(Rx))} = \|x\|_{\infty,R}^2 \geq 0.
	\end{align*}
	This also proves compatibility. Finally, for property~\ref{SIP4}: 
	\begin{align*}
	|[x,y]_{\infty,R}| &= |(Rx)_{f(I_\infty(Ry))}(Ry)_{f(I_\infty(Ry))}| \\&= \|Ry\|_{\infty}|(Rx)_{f(I_\infty(Ry))}| \\ &\leq \|Ry\|_{\infty}\|Rx\|_{\infty} = [x,x]_{\infty,R}^{1/2}[y,y]_{\infty,R}^{1/2}.
	\end{align*}
\end{proof}
%\begin{remark}
%	When $Ry$ attains its infinity norm in only one entry, any single-index pairing is always equal to the max WSIP. However, when $Ry$ attains its infinity norm in several entries, the two may differ.  \oprocend
%\end{remark}

\begin{corollary}[Relationship between max pairing and single-index pairings]\label{cor:Maxrelationship}
	Let $\subscr{\mathcal{S}}{index}$ be the set of all single-index pairings on $\mathbb{R}^n$ compatible with norm $\|\cdot\|_{\infty,R}$. Then we have
	$$\WSIP{x}{y}_{\infty,R} \geq [x,y]_{\infty,R}, \quad \text{for all } [\cdot,\cdot]_{\infty,R} \in \subscr{\mathcal{S}}{index}, x,y \in \R^n.$$
	Moreover, for all $x,y \in \R^n$, there exists $[\cdot,\cdot]_{\infty,R} \in \subscr{\mathcal{S}}{index}$ such that
	$\WSIP{x}{y}_{\infty,R} = [x,y]_{\infty,R}.$
\end{corollary}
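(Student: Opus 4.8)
The plan is to dispatch both claims directly from the definitions, treating the corollary as the $\ell_\infty$ analogue of the Deimling/Lumer relationship in Lemma~\ref{SupInfPairings}. Throughout I fix $x,y \in \R^n$ and write $S := \Iinfty(Ry)$, a nonempty finite subset of $\until{n}$ over which the max pairing $\WSIP{x}{y}_{\infty,R} = \max_{i\in S}(Rx)_i(Ry)_i$ takes its maximum, and over which every single-index pairing selects a representative.

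For the inequality, I would take an arbitrary single-index pairing $[\cdot,\cdot]_{\infty,R}\in\subscr{\mathcal{S}}{index}$, associated with some choice function $f\in\subscr{\mathcal{F}}{choice}$. The defining property $f(S)\in S$ means that the index $f(S)$ is one of the indices competing in the maximum, so
$$[x,y]_{\infty,R} = (Rx)_{f(S)}(Ry)_{f(S)} \leq \max_{i\in S}(Rx)_i(Ry)_i = \WSIP{x}{y}_{\infty,R}.$$
Since the single-index pairing was arbitrary, this establishes the first claim.

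For the equality claim, I would exhibit a choice function realizing the maximum at the given pair $(x,y)$. As $S$ is finite and nonempty, I pick $i^\star\in S$ attaining $\max_{i\in S}(Rx)_i(Ry)_i$, and then define $f_\star$ by $f_\star(S)=i^\star$ and $f_\star(T)=\min T$ on every other nonempty $T\subseteq\until{n}$; this is a legitimate choice function precisely because $i^\star\in S$. The corresponding single-index pairing then satisfies $[x,y]_{\infty,R} = (Rx)_{i^\star}(Ry)_{i^\star} = \WSIP{x}{y}_{\infty,R}$, as required.

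I do not expect a genuine obstacle here; the only subtlety worth flagging is that an element of $\subscr{\mathcal{S}}{index}$ is specified by a \emph{global} choice function on all nonempty subsets of $\until{n}$, whereas equality is demanded only at the one fixed pair $(x,y)$. This mismatch is exactly what supplies the needed freedom: I may pin down $f_\star$ on the single set $S=\Iinfty(Ry)$ so as to hit the maximizing index, and define it arbitrarily (for instance by $\min$) elsewhere, so that the constructed pairing is a bona fide single-index pairing compatible with $\|\cdot\|_{\infty,R}$. Conceptually, the two claims together say that the max pairing is the pointwise supremum of all single-index pairings, and that this supremum is in fact attained.
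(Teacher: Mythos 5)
Your proposal is correct and follows essentially the same argument as the paper: the inequality holds because every single-index pairing evaluates the product $(Rx)_i(Ry)_i$ at one particular index of $\Iinfty(Ry)$, hence is dominated by the maximum, and equality is obtained by choosing a choice function that maps $\Iinfty(Ry)$ to the maximizing index $i^\star$. Your explicit completion of the choice function by $f_\star(T)=\min T$ on other subsets is a slightly more detailed rendering of the paper's phrase ``any choice function satisfying $f(\Iinfty(Ry)) = i^*$,'' but it is the same construction.
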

\begin{proof}
	$\WSIP{x}{y}_{\infty,R} \geq [x,y]_{\infty,R}$ follows by definition. Moreover, if $x,y$ are fixed, let $i^* = \arg\!\max_{i \in I_{\infty}(Ry)} (Ry)_i(Rx)_i$. Then any choice function satisfying $f(\Iinfty(Ry)) = i^*$ defines a single-index pairing with $\WSIP{x}{y}_{\infty,R} = [x,y]_{\infty,R}$.
\end{proof}

%We are now ready to prove Lemma \ref{lemma:inftyProperties}.
\begin{proof}[Proof of Lemma \ref{lemma:inftyProperties}]
	%(Lumer's equality) Let $[\cdot,\cdot]_{\infty,R}$ be any single-index SIP in $\subscr{\mathcal{S}}{index}$. Since it is an SIP, it satisfies Lumer's equality, Lemma \ref{lemma:Lumer}, with
	%$$\mu_{\infty,R}(A) = \sup_{\|x\|_{\infty,R} = 1} [Ax,x]_{\infty,R} = \sup_{x \neq \vectorzeros[n]} \frac{[Ax,x]_{\infty,R}}{\|x\|_{\infty,R}^2}.$$ 
	%Thus, $[Ax,x]_{\infty,R} \leq \mu_{\infty,R}(A)$,  for all $\|x\|_{\infty,R} = 1$. But by Corollary~\ref{cor:Maxrelationship}, $\WSIP{Ax}{x}_{\infty,R} \leq \mu_{\infty,R}(A)$, for all $\|x\|_{\infty,R} = 1$. However, for all $x$ with $\|x\|_{\infty,R} = 1$, there exists single-index SIP with $\WSIP{Ax}{x}_{\infty,R} = [Ax,x]_{\infty,R}$. And since $\sup_{\|x\|_{\infty,R} = 1} [Ax,x]_{\infty,R} = \mu_{\infty,R}(A)$, we also have that
	%$$\mu_{\infty,R}(A) = \sup_{\|x\|_{\infty,R} = 1} \WSIP{Ax}{x}_{\infty,R}= \sup_{x \neq \vectorzeros[n]} \frac{\WSIP{Ax}{x}_{\infty,R}}{\|x\|_{\infty,R}^2}.$$
	Let $x,y \in \real^n \setminus \{\vectorzeros[n]\}$. Then by Corollary~\ref{cor:Maxrelationship}, there exists $[\cdot,\cdot]_{\infty,R} \in \subscr{\mathcal{S}}{index}$ such that
	$\WSIP{x}{y}_{\infty,R} = [x,y]_{\infty,R}$.
	However, since $[\cdot,\cdot]_{\infty,R}$ is a Lumer pairing, it satisfies Deimling's inequality,~\eqref{eq:DeimlingIneq}. Thus,
	\begin{align*}
	\WSIP{x}{y}_{\infty,R} &= [x,y]_{\infty,R} \\&\leq \|y\|_{\infty,R}\lim_{h \to 0^+}\frac{\|y + hx\|_{\infty,R} - \|y\|_{\infty,R}}{h}.
	\end{align*}
	Since $x,y$ were arbitrary, this proves the result.
\end{proof}

\section{Proof of Proposition~\ref{prop:osLProperties}}\label{app:osLProp}

To prove Proposition~\ref{prop:osLProperties}, we will first prove one additional property of weak pairings. 
\begin{lemma}\label{cor:negativeLumer}
	Let $\|\cdot\|$ be a norm on $\real^n$ with compatible weak pairing $\WSIP{\cdot}{\cdot}$ satisfying Deimling's inequality,~\eqref{eq:DeimlingIneq}. Then for all $x \in \real^n, c \in \real$:
	\begin{equation}
	\WSIP{cx}{x} = c\|x\|^2.
	\end{equation}
\end{lemma}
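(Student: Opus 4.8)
The plan is to dispatch the case $c \geq 0$ immediately and then reduce the harder case $c < 0$ to the single identity $\WSIP{-x}{x} = -\|x\|^2$, which I would pin down by a sandwich argument.

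For $c \geq 0$, weak homogeneity (property~\ref{WSIP3}) gives $\WSIP{cx}{x} = c\WSIP{x}{x}$, and compatibility $\WSIP{x}{x} = \|x\|^2$ closes this case at once. I note also that taking $\alpha = 0$ in weak homogeneity yields $\WSIP{\vectorzeros[n]}{x} = 0$, a fact I will use below.

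The crux is $c < 0$, which I would handle by first establishing $\WSIP{-x}{x} = -\|x\|^2$. For the upper bound I would invoke Deimling's inequality with first argument $-x$ and second argument $x$: since $\|x - hx\| = (1-h)\|x\|$ for small $h > 0$, the difference quotient tends to $-\|x\|$, so that $\WSIP{-x}{x} \leq -\|x\|^2$. For the matching lower bound I would use subadditivity (property~\ref{WSIP1}) with $x_1 = x$ and $x_2 = -x$: because $\WSIP{\vectorzeros[n]}{x} = 0$, this reads $0 \leq \WSIP{x}{x} + \WSIP{-x}{x}$, and compatibility then gives $\WSIP{-x}{x} \geq -\|x\|^2$. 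Together these two estimates force $\WSIP{-x}{x} = -\|x\|^2$.

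Finally, for arbitrary $c < 0$ I would write $c = -|c|$ and apply weak homogeneity with the nonnegative scalar $|c|$ to the vector $-x$, obtaining $\WSIP{cx}{x} = \WSIP{|c|(-x)}{x} = |c|\WSIP{-x}{x} = -|c|\|x\|^2 = c\|x\|^2$. The main obstacle is precisely the $c < 0$ regime, where weak homogeneity on its own does not apply; the resolution is the two-sided estimate above, combining Deimling's inequality (upper bound) with subadditivity through the zero vector (lower bound) to determine $\WSIP{-x}{x}$ exactly.
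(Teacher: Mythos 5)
Your proof is correct, and it diverges from the paper's in one meaningful way. Both arguments dispatch $c\geq 0$ by weak homogeneity, reduce $c<0$ to the single identity $\WSIP{-x}{x}=-\|x\|^2$, and obtain the lower bound $\WSIP{-x}{x}\geq-\|x\|^2$ from subadditivity (the paper expands $\WSIP{x}{x}=\WSIP{x-x+x}{x}\leq\WSIP{-x}{x}+2\WSIP{x}{x}$, while you expand $0=\WSIP{\vectorzeros[n]}{x}\leq\WSIP{x}{x}+\WSIP{-x}{x}$ after noting that $\alpha=0$ in weak homogeneity forces $\WSIP{\vectorzeros[n]}{x}=0$; these are essentially the same estimate). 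The difference is in the upper bound: the paper invokes Lumer's equality for weak pairings (Theorem~\ref{theorem:lumer}) with $A=-I_n$, using $\mu(-I_n)=-1$ to conclude $\sup_{x\neq\vectorzeros[n]}\WSIP{-x}{x}/\|x\|^2=-1$, whereas you apply Deimling's inequality directly and exploit the elementary identity $\|x-hx\|=(1-h)\|x\|$ for $0<h<1$, so the difference quotient is constantly $-\|x\|$. Your route is more self-contained: it avoids the matrix-measure machinery entirely and makes visible that only the raw Deimling inequality is needed, not the full strength of Lumer's equality (whose proof in the paper is itself a nontrivial Neumann-series argument). The paper's route, on the other hand, is a one-line consequence of a theorem already established at that point in the development, so it is shorter given the surrounding infrastructure. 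Both proofs use the Deimling hypothesis in an essential way, as they must, since the conclusion fails for general weak pairings.
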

\begin{proof}
	If $c \geq 0$, the result is trivial, so without loss of generality, assume $c = -1$. Lumer's equality, Theorem~\ref{theorem:lumer}, with $A = -I_n$ implies for all $x \in \real^n$
	$$\sup_{x \neq \vectorzeros[n]} \frac{\WSIP{-x}{x}}{\|x\|^2} = \mu(-I_n) = -1 \quad \implies \quad \WSIP{-x}{x} \leq -\|x\|^2.$$
	Regarding the other inequality, observe that
	\begin{align*}
	\WSIP{x}{x} &= \WSIP{x - x + x}{x} \leq \WSIP{-x}{x} + 2\WSIP{x}{x} \\ &\implies \WSIP{-x}{x} \geq -\WSIP{x}{x} = -\|x\|^2.
	\end{align*}
	By weak homogeneity, this proves the result.
\end{proof}

\begin{proof}[Proof of Proposition~\ref{prop:osLProperties}]
	Properties~\ref{osL1},~\ref{osL3}, and~\ref{osL4} are consequences of Cauchy-Schwarz, weak homogeneity, and subadditivity of the weak pairing, respectively. Regarding property~\ref{osL2}, we will show the more general result that for any $x,y \in C, c \in \real$, $\WSIP{x + cy}{y} = \WSIP{x}{y} + c\|y\|^2$. The inequality
	$$\WSIP{x + cy}{y} \leq \WSIP{x}{y} + c\|y\|^2,$$
	follows from subadditivity and Lemma~\ref{cor:negativeLumer}. Additionally,
	\begin{align*}
	\WSIP{x}{y} &= \WSIP{x + cy - cy}{y} \leq \WSIP{x + cy}{y} + \WSIP{-cy}{y} \\ &= \WSIP{x + cy}{y} - c\|y\|^2,
	\end{align*}
	where the final equality holds by Lemma~\ref{cor:negativeLumer}. Rearranging the inequality implies the result.
\end{proof}

\begin{arxiv}
\section{Sum decomposition of weak pairings}\label{app:diagWSIP}
\begin{lemma}[Weak pairing sum decomposition]\label{lemma:sumdecomp}
  For $N=\sum_{i=1}^nN_i$, let $x = (x_1,\dots,x_n), y = (y_1,\dots,y_n)
  \in \real^N$ and $x_i,y_i \in \real^{N_i}$. Let $\norm{\cdot}{i}$ and
  $\WSIP{\cdot}{\cdot}_i$ denote a norm and compatible weak pairing on
  $\real^{N_i}$ and let $\xi \in \realpositive^n$. Then the mapping
  $\map{\WSIP{\cdot}{\cdot}_\xi}{\real^N \times \real^N}{\real^N}$ defined by
	$$\WSIP{x}{y}_\xi := \sum\nolimits_{i = 1}^n \xi_i\WSIP{x_i}{y_i}_i,$$ is a weak pairing
  compatible with the norm $\norm{x}{\xi}^2 = \sum_{i=1}^n
  \xi_i\norm{x_i}{i}^2$.
\end{lemma}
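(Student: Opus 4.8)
The plan is to verify directly the four defining properties of a weak pairing from Definition~\ref{defn:WSIP}, exploiting that each $\WSIP{\cdot}{\cdot}_i$ is a weak pairing compatible with $\norm{\cdot}{i}$ and that the weights $\xi_i$ are strictly positive. I would start with the compatibility claim, which simultaneously settles positive definiteness (property~\ref{WSIP4}): since each $\WSIP{\cdot}{\cdot}_i$ is compatible with $\norm{\cdot}{i}$, we have $\WSIP{x}{x} = \sum_{i=1}^n \xi_i\WSIP{x_i}{x_i}_i = \sum_{i=1}^n \xi_i \norm{x_i}{i}^2 = \norm{x}{\xi}^2$. As $\xi_i > 0$ and each $\norm{x_i}{i}^2 \geq 0$, this is strictly positive precisely when some $x_i \neq \vectorzeros[N_i]$, i.e.\ when $x \neq \vectorzeros[N]$.

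Next I would dispatch subadditivity and continuity (property~\ref{WSIP1}) and weak homogeneity (property~\ref{WSIP3}), both of which are immediate from the corresponding properties of each $\WSIP{\cdot}{\cdot}_i$ together with $\xi_i > 0$. Subadditivity follows by summing the componentwise inequalities $\WSIP{x_i+x_i'}{y_i}_i \leq \WSIP{x_i}{y_i}_i + \WSIP{x_i'}{y_i}_i$ against the positive weights; continuity in the first argument follows since the map is a finite, positively weighted sum of maps continuous in their first arguments; and weak homogeneity, including the sign-invariance $\WSIP{-x}{-y} = \WSIP{x}{y}$, holds termwise because for $\alpha \geq 0$ the scalar passes through each summand and the pairs $(-x_i,-y_i)$ leave each $\WSIP{\cdot}{\cdot}_i$ unchanged.

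The main obstacle is the Cauchy-Schwarz inequality (property~\ref{WSIP5}), which requires two nested applications of Cauchy-Schwarz. First I would bound $|\WSIP{x}{y}| \leq \sum_{i=1}^n \xi_i |\WSIP{x_i}{y_i}_i| \leq \sum_{i=1}^n \xi_i \WSIP{x_i}{x_i}_i^{1/2}\WSIP{y_i}{y_i}_i^{1/2}$, using the triangle inequality for the absolute value together with the Cauchy-Schwarz inequality for each $\WSIP{\cdot}{\cdot}_i$. Then, writing each summand as $\big(\xi_i^{1/2}\WSIP{x_i}{x_i}_i^{1/2}\big)\big(\xi_i^{1/2}\WSIP{y_i}{y_i}_i^{1/2}\big)$, I would apply the ordinary finite-dimensional Cauchy-Schwarz inequality in $\real^n$ to the vectors with entries $\xi_i^{1/2}\WSIP{x_i}{x_i}_i^{1/2}$ and $\xi_i^{1/2}\WSIP{y_i}{y_i}_i^{1/2}$, yielding $|\WSIP{x}{y}| \leq \big(\sum_{i} \xi_i \WSIP{x_i}{x_i}_i\big)^{1/2}\big(\sum_{i} \xi_i \WSIP{y_i}{y_i}_i\big)^{1/2} = \WSIP{x}{x}^{1/2}\WSIP{y}{y}^{1/2}$. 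This completes the verification of all four properties and of compatibility, so $\WSIP{\cdot}{\cdot}$ is a weak pairing compatible with $\norm{\cdot}{\xi}$.
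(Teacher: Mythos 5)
Your proof is correct, and its overall structure matches the paper's: both verify the four properties of Definition~\ref{defn:WSIP} term by term, and your treatment of subadditivity/continuity, weak homogeneity, and positive definiteness (which also gives compatibility) is essentially identical to the paper's. The one genuine divergence is in the Cauchy--Schwarz property~\ref{WSIP5}: the paper argues that by induction it suffices to treat $n=2$, and then closes the $n=2$ case by expanding $(\sqrt{a_1b_1}+\sqrt{a_2b_2})^2$ and invoking the AM--GM bound $2\sqrt{\alpha\beta}\leq \alpha+\beta$; you instead bound $|\WSIP{x}{y}| \leq \sum_i \bigl(\xi_i^{1/2}\WSIP{x_i}{x_i}_i^{1/2}\bigr)\bigl(\xi_i^{1/2}\WSIP{y_i}{y_i}_i^{1/2}\bigr)$ and then apply the classical Euclidean Cauchy--Schwarz inequality in $\real^n$ to these two vectors of nonnegative entries. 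Your route handles all $n$ in a single stroke, with no induction, and makes transparent that the aggregate inequality is just ordinary Cauchy--Schwarz applied on top of the componentwise ones; the paper's route is self-contained at the level of scalar inequalities (AM--GM) but needs the reduction step. Both are valid, and the difference is one of economy rather than substance.
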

\begin{proof}
	We verify the properties in Definition~\ref{defn:WSIP}. Regarding property~\ref{WSIP1}, let $x_1,x_2,y \in \real^N$. Then $\WSIP{x_1 + x_2}{y}_\xi = \sum_{i=1}^n \xi_i\WSIP{x_{1_i} + x_{2_i}}{y_i}_i \leq \sum_{i=1}^n \xi_i\WSIP{x_{1_i}}{y_i}_i + \xi_i\WSIP{x_{2_i}}{y_i}_i = \WSIP{x_1}{y}_\xi + \WSIP{x_2}{y}_\xi$. Continuity in the first argument follows from continuity of the first argument of each of the $\WSIP{\cdot}{\cdot}_i$. Regarding property~\ref{WSIP3}, the result is straightforward because of weak homogeneity of each of the $\WSIP{\cdot}{\cdot}_i$. Regarding property~\ref{WSIP4},
	$$\WSIP{x}{x}_\xi = \sum_{i = 1}^n \xi_i\WSIP{x_i}{x_i}_i = \sum_{i = 1}^n \xi_i\norm{x_i}{i}^2 > 0, \quad \text{for all } x \neq \vectorzeros[N].$$
	Regarding property~\ref{WSIP5}, since $n$ is finite, by induction it suffices to check $n = 2$. For convenience, define $a_i = \xi_i\WSIP{x_i}{x_i}_i, b_i = \xi_i\WSIP{y_i}{y_i}_i$. Then
	\begin{align*}
	\WSIP{x}{y}_\xi^2 &= \Big(\sum\nolimits_{i=1}^2 \xi_i\WSIP{x_i}{y_i}_i\Big)^2 \leq \Big(\sum\nolimits_{i=1}^2 a_i^{1/2}b_i^{1/2}\Big)^2 \\
	&= (\sqrt{a_1b_1} + \sqrt{a_2b_2})^2 = a_1b_1 + a_2b_2 + 2\sqrt{a_1b_1a_2b_2} \\
	&\leq a_1b_1 + a_2b_2 + a_1b_2 + a_2b_1 = (a_1 + a_2)(b_1 + b_2) \\
	&= \Big(\sum\nolimits_{i=1}^2 a_i\Big)\Big(\sum\nolimits_{i=1}^2 b_i\Big) = \WSIP{x}{x}_\xi\WSIP{y}{y}_\xi,
	\end{align*}
	where we have used Cauchy-Schwarz for the $\WSIP{\cdot}{\cdot}_i$ and the inequality $2\sqrt{\alpha\beta} \leq \alpha + \beta$ for $\alpha,\beta \geq 0$. Taking the square root of each side proves the result.
\end{proof}

Next we establish Deimling's inequality,~\eqref{eq:DeimlingIneq}, and the curve norm derivative formula,~\eqref{eq:curvenormderivative}.
\begin{lemma}[Deimling's inequality and curve norm derivative formula for weak pairing sum decomposition]\label{lemma:sumDeimlingCurve}
	Let $\WSIP{\cdot}{\cdot}_{\xi}$ and $\|\cdot\|_\xi$ be defined as in Lemma~\ref{lemma:sumdecomp}. If \begin{enumerate}
		\item\label{item:sumDeimling} each $\WSIP{\cdot}{\cdot}_i$ satisfies Deimling's inequality, then $\WSIP{\cdot}{\cdot}_{\xi}$ satisfies Deimling's inequality,
		\item\label{item:sumCurve} each $\WSIP{\cdot}{\cdot}_i$ satisfies the curve norm derivative formula, then $\WSIP{\cdot}{\cdot}_\xi$ satisfies the curve norm derivative formula.
	\end{enumerate}
\end{lemma}
To prove Lemma~\ref{lemma:sumDeimlingCurve}\ref{item:sumCurve}, we first prove a useful equivalent characterization of the curve norm derivative formula.

\begin{proposition}[Equivalent curve norm derivative formula characterization]\label{prop:curvenorm}
	Let $\map{x}{{]a,b[}}{\real^n}$ be differentiable and $\WSIP{\cdot}{\cdot}$ be a weak pairing compatible with the norm $\|\cdot\|$ on $\real^n$. Then the following statements are equivalent
	\begin{enumerate}
		\item\label{item:curvenormorig} $\|x(t)\|D^+\|x(t)\| = \WSIP{\dot{x}(t)}{x(t)}$ for almost every $t \in {]a,b[}$,
		\item\label{item:curvenormsquared} $D^+\|x(t)\|^2 = 2\WSIP{\dot{x}(t)}{x(t)}$ for almost every $t \in {]a,b[}$.
	\end{enumerate}
\end{proposition}
\begin{proof}
	We first prove \ref{item:curvenormorig} $\implies$ \ref{item:curvenormsquared}. Initially, suppose that $t \in {]a,b[}$ is an instant in time at which $x(t) = \vectorzeros[n]$. Then we compute
	\begin{align*}
	D^+\|x(t)\|^2 &= \limsup_{h \to 0^+} \frac{\|x(t+h)\|^2 - \|x(t)\|^2}{h} \\
	&= \lim_{h \to 0^+} \frac{\|x(t) + h\dot{x}(t)\|^2}{h} = \lim_{h \to 0^+} \frac{\|h\dot{x}(t)\|^2}{h} \\
	&= \lim_{h \to 0^+}\frac{h^2\|\dot{x}(t)\|^2}{h} = 0,
	\end{align*}
	so the result holds for all $t \in {]a,b[}$ for which $x(t) = \vectorzeros[n]$. So alternatively suppose $x(t) \neq \vectorzeros[n]$. Then 
	\begin{align*}
	&D^+\|x(t)\|^2 = \limsup_{h \to 0^+} \frac{\|x(t+h)\|^2 - \|x(t)\|^2}{h} \\
	&= \lim_{h \to 0^+} \Big(\frac{\|x(t+h)\| - \|x(t)\|}{h} (\|x(t+h)\| + \|x(t)\|)\Big) \\
	&= \big(D^+\|x(t)\|\big)\lim_{h \to 0^+} \|x(t+h)\| + \|x(t)\|, \\
	&\overset{\text{a.e.}}{=} \frac{\WSIP{\dot{x}(t)}{x(t)}}{\|x(t)\|} \cdot 2\|x(t)\| = 2\WSIP{\dot{x}(t)}{x(t)},
	\end{align*}
	where $\overset{\text{a.e.}}{=}$ denotes that the equality holds for almost every $t \in {]a,b[}$ by the assumption of \ref{item:curvenormorig}. This proves \ref{item:curvenormorig} $\implies$ \ref{item:curvenormsquared}. Regarding the other implication, note that the result holds trivially for all $t \in {]a,b[}$ for which $x(t) = \vectorzeros[n]$. Thus, we suppose that $x(t) \neq \vectorzeros[n]$ (note that this supposition implies that $\|x(t+h)\| + \|x(t)\| > 0$ for all $h > 0$). Then we compute
	\begin{align*}
	&D^+\|x(t)\| = \limsup_{h \to 0^+} \frac{\|x(t+h)\| - \|x(t)\|}{h} \\
	&= \lim_{h \to 0^+} \frac{\|x(t+h)\|^2 - \|x(t)\|^2}{h} \frac{1}{\|x(t+h)\| + \|x(t)\|} \\
	&= \big(D^+\|x(t)\|^2\big)\lim_{h \to 0^+} \frac{1}{\|x(t+h)\| + \|x(t)\|} \\
	&\overset{\text{a.e.}}{=} 2\WSIP{\dot{x}(t)}{x(t)}\frac{1}{2\|x(t)\|}.
	\end{align*}
	Multiplying both sides by $\|x(t)\|$ proves the implication.
\end{proof}
We are now ready to prove Lemma~\ref{lemma:sumDeimlingCurve}.
\begin{proof}[Proof of Lemma~\ref{lemma:sumDeimlingCurve}]
	First we prove item~\ref{item:sumDeimling}. We prove the result for $n = 2$ and then by induction the result easily extends to arbitrary $n$. For $x = (x_1,x_2)$ and $y = (y_1,y_2)$, we compute
	\begin{align*}
	&\WSIP{x}{y}_\xi = \xi_1\WSIP{x_1}{y_1}_1 + \xi_2\WSIP{x_2}{y_2}_2 \\
	&\leq \lim_{h \to 0^+} \Big(\xi_1\|y_1\|_1\frac{\|y_1 + hx_1\|_1 - \|y_1\|_1}{h} \\ &\quad + \xi_2\|y_2\|_2\frac{\|y_2 + hx_2\|_2 - \|y_2\|_2}{h}\Big) \\
	&= \lim_{h \to 0^+} \frac{\xi_1\|y_1\|_1\|y_1+hx_1\|_1 + \xi_2\|y_2\|_2\|y_2 + hx_2\|_2 - \|y\|_\xi^2}{h},
	\end{align*}
	where the first inequality holds by applying Deimling's inequality to each of $\WSIP{\cdot}{\cdot}_i$ for $i \in \{1,2\}$.
	Next we demonstrate that $\xi_1\|y_1\|_1\|y_1+hx_1\| + \xi_2\|y_2\|_2\|y_2+hx_2\|_2 \leq \|y\|_\xi \|y+hx\|_\xi$. Since both sides of the inequality are nonnegative, we square the left-hand side and compute
	\begin{align*}
	&\big(\xi_1\|y_1\|_1\|y_1+hx_1\| + \xi_2\|y_2\|_2\|y_2+hx_2\|_2\big)^2 = \\
	&\xi_1^2\|y_1\|_1^2\|y_1+hx_1\|_1^2 + \xi_2^2\|y_2\|_2^2\|y_2+hx_2\|_2^2 \\& + 2\xi_1\xi_2\|y_1\|_1\|y_2\|_2\|y_1+hx_1\|_1\|y_2+hx_2\|_2 \\
	&\leq \xi_1^2\|y_1\|_1^2\|y_1+hx_1\|_1^2 + \xi_2^2\|y_2\|_2^2\|y_2+hx_2\|_2^2 \\
	&+ \xi_1\xi_2\|y_1\|_1^2\|y_2 + hx_2\|_2^2 + \xi_1\xi_2\|y_2\|_2^2\|y_1 + hx_1\|_1^2 \\
	&= \Big(\xi_1\|y_1\|_1^2 + \xi_2\|y_2\|^2\Big)\Big(\xi_1\|y_1+hx_1\|_1^2+ \xi_2\|y_2+hx_2\|_2^2\Big) \\
	&= \|y\|_\xi^2\|y+hx\|_\xi^2,
	\end{align*}
	where the inequality holds due to $2\alpha\beta \leq \alpha^2 + \beta^2$ for all $\alpha,\beta \in \real$ with $\alpha = \|y_1\|_1\|y_2+hx_2\|_2, \beta = \|y_2\|_2\|y_2+hx_2\|_2$. This proves the desired inequality. As a consequence, we see
	\begin{align*}
	&\WSIP{x}{y}_\xi \\
	&\leq \lim_{h \to 0^+} \frac{\xi_1\|y_1\|_1\|y_1+hx_1\|_1 + \xi_2\|y_2\|_2\|y_2 + hx_2\|_2 - \|y\|_\xi^2}{h} \\
	&\leq \lim_{h \to 0^+} \frac{\|y\|_\xi\|y+hx\|_\xi - \|y\|_\xi^2}{h} \\
	&= \|y\|_\xi \lim_{h \to 0^+} \frac{\|y+hx\|_\xi - \|y\|_\xi}{h},
	\end{align*}
	which proves Deimling's inequality. \\
	Regarding item~\ref{item:sumCurve}, let $\map{x}{{]a,b[}}{\real^N}$ be differentiable. We apply Proposition~\ref{prop:curvenorm} to prove that $D^+\|x(t)\|_\xi^2 = 2\WSIP{\dot{x}(t)}{x(t)}$ for almost every $t \in {]a,b[}$. We compute
	\begin{align*}
	D^+\|x(t)\|_\xi^2 &= D^+ \Big(\sum_{i = 1}^n \xi_i \|x_i(t)\|_i^2\Big) \\
	&=\sum_{i = 1}^n \xi_i D^+\|x_i(t)\|_i^2 \overset{\text{a.e.}}{=} 2\sum_{i = 1}^n \xi_i\WSIP{\dot{x}_i(t)}{x_i(t)}_i \\&= 2\WSIP{\dot{x}(t)}{x(t)}_{\xi},
	\end{align*}
	where the third equality holds by the assumption that each $\WSIP{\cdot}{\cdot}_i$ satisfies the curve norm derivative formula. Thus, the result is proved.
\end{proof}

\end{arxiv}

\begin{arxiv}
\section{Semi-Contraction Equivalences}\label{app:semi}
We recall semi-norms and some of their properties and refer to~\cite{SJ-PCV-FB:19q} for more details.
\begin{definition}[Semi-norms]
	A map $\map{\seminorm{\cdot}}{\real^n}{\realnonnegative}$ is a \emph{semi-norm} on $\real^n$ if
	\begin{enumerate}
		\item $\seminorm{cv} = |c|\seminorm{v}$ for all $v \in \real$, $c \in \real$;
		\item $\seminorm{v + w} \leq \seminorm{v} + \seminorm{w}$, for all $v, w \in \real^n$.
	\end{enumerate}
\end{definition}

For a semi-norm $\seminorm{\cdot}$, its kernel is defined by
$$\kernel \seminorm{\cdot} = \setdef{v \in \real^n}{\seminorm{v} = 0}.$$
Note that $\kernel \seminorm{\cdot}$ is a subspace of $\real^n$ and that $\seminorm{\cdot}$ is a norm on $\kerperp{\seminorm{\cdot}}$. Since $\kerperp{\seminorm{\cdot}} \cong \real^m$ for some $m \leq n$, we are able to define weak pairings on $\kerperp{\seminorm{\cdot}}$.

\begin{definition}[Induced semi-norm]
	Let $\seminorm{\cdot}$ be a semi-norm on $\real^n$. The induced semi-norm is a map $\map{\seminorm{\cdot}}{\real^{n \times n}}{\realnonnegative}$ defined by
	$$\seminorm{A} := \sup \bigsetdef{\seminorm{Av}/\seminorm{v}}{v \in \kerperp{\seminorm{\cdot}} \setminus \{\vectorzeros[n]\}}.$$
\end{definition}
\begin{definition}[Matrix log semi-norm]
	Let $\seminorm{\cdot}$ be a semi-norm on $\real^n$ and its corresponding induced semi-norm on $\real^{n \times n}$. Then the \emph{log semi-norm} associated with $\seminorm{\cdot}$ is defined by
	$$\semimeasure{A} := \lim_{h \to 0^+} \frac{\seminorm{I_n + hA} - 1}{h}.$$
\end{definition}

We refer to~\cite[Proposition~3 and Theorem~5]{SJ-PCV-FB:19q} for properties of induced semi-norms and log semi-norms. 

We specialize to consider semi-norms of the form $\seminorm{v} = \|\mathcal{P}v\|$ where either $\mathcal{P} \in \real^{m \times n}$ is a rank $m$ matrix for $m \leq n$ and $\|\cdot\|$ is a norm on $\real^m$ or $\mathcal{P} \in \real^{n \times n}$ with rank $m \leq n$ and $\|\cdot\|$ is a norm on $\real^n$. To make this convention clear, we say $\seminorm{\cdot} = \|\mathcal{P}\cdot\|$. Then clearly $\kernel \seminorm{\cdot} = \kernel \mathcal{P}$. First, we prove a useful lemma resembling Lumer's equality, Theorem~\ref{theorem:lumer}.

\begin{lemma}[Lumer's equality for log semi-norms]\label{lemma:semiLumer}
	Let $\seminorm{\cdot} = \|\mathcal{P}\cdot\|$ be a semi-norm on $\real^n$ with compatible weak pairing $\WSIP{\cdot}{\cdot}$ on $\kerperp{\seminorm{\cdot}}$ satisfying Deimling's inequality,~\eqref{eq:DeimlingIneq}. Then for all $A \in \real^{n \times n}$ satisfying $A \kernel \seminorm{\cdot} \subseteq \kernel \seminorm{\cdot}$
	\begin{align}
	\label{eq:kerperp}\semimeasure{A} &= \sup_{x \in \kerperp{\seminorm{\cdot}} \setminus \{\vectorzeros[n]\}} \frac{\WSIP{\mathcal{P}Ax}{\mathcal{P}x}}{\seminorm{x}^2} \\
	&\label{eq:ker}= \sup_{x \notin \kernel \seminorm{\cdot}} \frac{\WSIP{\mathcal{P}Ax}{\mathcal{P}x}}{\seminorm{x}^2}.
	\end{align}
\end{lemma}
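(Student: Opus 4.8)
The plan is to reduce the matrix semi-measure $\semimeasure{A}$ on $\real^n$ to an ordinary matrix measure on the genuine normed space $V := \kerperp{\seminorm{\cdot}}$, and then to invoke Lumer's equality for weak pairings, Theorem~\ref{theorem:lumer}, on $V$. Throughout, write $K := \kernel \seminorm{\cdot} = \kernel \mathcal{P}$, let $\Pi$ denote the orthogonal projection of $\real^n$ onto $V$, and introduce the \emph{compressed operator} $\tilde{A} := \Pi A|_V : V \to V$. Because $\mathcal{P}$ annihilates $K$, for every $v \in V$ one has $\mathcal{P}Av = \mathcal{P}\tilde{A}v$, and hence $\mathcal{P}(I_n + hA)v = \mathcal{P}(I_V + h\tilde{A})v$ for all $h > 0$.

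First I would use this last identity to show $\seminorm{I_n + hA} = \|I_V + h\tilde{A}\|$, where the right-hand side is the operator norm on $V$ induced by $\seminorm{\cdot}$; passing to the limit in the definitions of the semi-measure and of the matrix measure then yields $\semimeasure{A} = \mu_V(\tilde{A})$, the matrix measure of $\tilde{A}$ with respect to $\seminorm{\cdot}$ on $V$. Note that this reduction uses only that $\mathcal{P}$ annihilates $K$, and not yet the kernel-invariance of $A$.

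Next, since $\seminorm{\cdot}$ restricts to a genuine norm on $V$ and the compatible weak pairing satisfies Deimling's inequality, Theorem~\ref{theorem:lumer} applied on $V$ gives $\mu_V(\tilde{A}) = \sup_{x \in V \setminus \{\vectorzeros[n]\}} \WSIP{\tilde{A}x}{x}/\seminorm{x}^2$. Rewriting the numerator through $\mathcal{P}$, via $\WSIP{\tilde{A}x}{x} = \WSIP{\mathcal{P}\tilde{A}x}{\mathcal{P}x} = \WSIP{\mathcal{P}Ax}{\mathcal{P}x}$ for $x \in V$, produces the first claimed identity~\eqref{eq:kerperp}.

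Finally, for~\eqref{eq:ker} I would enlarge the supremum from $V \setminus \{\vectorzeros[n]\}$ to all $x \notin K$. Decomposing $x = \Pi x + (x - \Pi x)$ with $x - \Pi x \in K$, the fact that $\mathcal{P}$ annihilates $K$ gives $\mathcal{P}x = \mathcal{P}\Pi x$ and $\seminorm{x} = \seminorm{\Pi x}$, while the hypothesis $A K \subseteq K$ gives $\mathcal{P}Ax = \mathcal{P}A\Pi x$; thus the quotient $\WSIP{\mathcal{P}Ax}{\mathcal{P}x}/\seminorm{x}^2$ is unchanged under $x \mapsto \Pi x$, so the two suprema coincide. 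The kernel-invariance $A K \subseteq K$ is needed precisely for the identity $\mathcal{P}Ax = \mathcal{P}A\Pi x$, since otherwise the component $A(x - \Pi x)$ could survive under $\mathcal{P}$. I expect the main obstacle to be the clean reduction $\semimeasure{A} = \mu_V(\tilde{A})$ together with the bookkeeping that identifies the weak pairing on $V$ with the $\mathcal{P}$-image pairing written in the statement; once that identification and the compatibility $\seminorm{x}^2 = \WSIP{\mathcal{P}x}{\mathcal{P}x}$ are pinned down, both equalities follow from Theorem~\ref{theorem:lumer} and the elementary kernel algebra above.
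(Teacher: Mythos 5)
Your proof is correct, and it takes a genuinely different route from the one in the paper. The paper reproves Theorem~\ref{theorem:lumer} from scratch in the semi-norm setting: Deimling's inequality yields $\semimeasure{A}\geq\sup_{x\in\kerperp{\seminorm{\cdot}}\setminus\{\vectorzeros[n]\}}\WSIP{\mathcal{P}Ax}{\mathcal{P}x}/\seminorm{x}^2$, and the reverse bound is obtained by the Neumann-series argument applied to $(I_n-hA)^{-1}$, where the invariance $A\kernel\seminorm{\cdot}\subseteq\kernel\seminorm{\cdot}$ is needed to guarantee $(I_n-hA)^{-1}x\notin\kernel\seminorm{\cdot}$; since that argument naturally produces the supremum over $x\notin\kernel\seminorm{\cdot}$, the paper must also prove equality of the two suprema (again via invariance) to close the loop. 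You instead compress $A$ to $\tilde{A}=\Pi A|_V$ on $V=\kerperp{\seminorm{\cdot}}$, show $\semimeasure{A}=\mu_V(\tilde{A})$ because $\mathcal{P}$ annihilates the kernel component of $Av$, and invoke Theorem~\ref{theorem:lumer} as a black box on $(V,\seminorm{\cdot}|_V)$. The bookkeeping you defer is indeed routine: under $V\cong\real^m$, the pulled-back map $(u,v)\mapsto\WSIP{\mathcal{P}u}{\mathcal{P}v}$ is a weak pairing (positive definiteness holds because $\mathcal{P}|_V$ is injective), it is compatible since $\WSIP{\mathcal{P}v}{\mathcal{P}v}=\|\mathcal{P}v\|^2=\seminorm{v}^2$, and Deimling's inequality transfers verbatim since $\seminorm{v+hu}=\|\mathcal{P}v+h\mathcal{P}u\|$. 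Your route buys two things: modularity (no repetition of the Neumann-series argument), and the sharper observation that \eqref{eq:kerperp} holds \emph{without} the invariance hypothesis, which enters only in passing to \eqref{eq:ker}; the paper's proof structure cannot exhibit this separation. The separation is real: for $n=2$, $\mathcal{P}=\diag(1,0)$, the $\ell_2$ norm on the image space, and $A=\left[\begin{smallmatrix}0&1\\0&0\end{smallmatrix}\right]$, one has $\semimeasure{A}=0$, which equals the supremum in \eqref{eq:kerperp}, while the supremum in \eqref{eq:ker} is $+\infty$. What the paper's direct argument buys in exchange is self-containedness: it manipulates only the stated objects and never needs the identification of $\kerperp{\seminorm{\cdot}}$ with $\real^m$ or the transfer of the weak-pairing axioms to that space.
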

\begin{proof}
	First we show the equivalence of~\eqref{eq:kerperp} and~\eqref{eq:ker}. Clearly
	\begin{multline*}
	\Big\{\frac{\WSIP{\mathcal{P}Ax}{\mathcal{P}x}}{\seminorm{x}^2} \; | \; x \in \kerperp{\seminorm{\cdot}} \setminus \{\vectorzeros[n]\}\Big\} \\ \subseteq \Big\{\frac{\WSIP{\mathcal{P}Ax}{\mathcal{P}x}}{\seminorm{x}^2} \; | \; x \notin \kernel \seminorm{\cdot}\Big\}.
	\end{multline*} 
	Regarding the other containment, let $x \notin \kernel \seminorm{\cdot}$. Then since $\real^n = \kernel \seminorm{\cdot} \oplus \kerperp{\seminorm{\cdot}}$, there exist $x_{\kernel} \in \kernel \seminorm{\cdot}, x_{\perp} \in \kerperp{\seminorm{\cdot}} \setminus \{\vectorzeros[n]\}$ such that $x = x_{\kernel} + x_{\perp}$. Then
	\begin{align*}
	\frac{\WSIP{\mathcal{P}Ax}{\mathcal{P}x}}{\seminorm{x}^2} &= \frac{\WSIP{\mathcal{P}A(x_{\kernel} + x_\perp)}{\mathcal{P}(x_{\kernel}+ x_\perp)}}{\|\mathcal{P}(x_{\kernel} + x_\perp)\|^2} \\
	&= \frac{\WSIP{\mathcal{P}Ax_\perp}{\mathcal{P}x_\perp}}{\seminorm{x_\perp}^2}.
	\end{align*}
	Where the second equality holds because $\kernel \seminorm{\cdot} = \kernel \mathcal{P}$ and $A \kernel \seminorm{\cdot} \subseteq \kernel \seminorm{\cdot}$. Therefore,
	\begin{multline*}
	\Big\{\frac{\WSIP{\mathcal{P}Ax}{\mathcal{P}x}}{\seminorm{x}^2} \; | \; x \in \kerperp{\seminorm{\cdot}} \setminus \{\vectorzeros[n]\}\Big\} \\ \supseteq \Big\{\frac{\WSIP{\mathcal{P}Ax}{\mathcal{P}x}}{\seminorm{x}^2} \; | \; x \notin \kernel \seminorm{\cdot}\Big\}.
	\end{multline*} 
	This proves that the two sets are equal and therefore their supremums are equal. Next, we show that $\semimeasure{A} \geq \sup_{x \in \kerperp{\seminorm{\cdot}} \setminus \{\vectorzeros[n]\}} \frac{\WSIP{\mathcal{P}Ax}{\mathcal{P}x}}{\seminorm{x}^2}$. By Deimling's inequality,~\eqref{eq:DeimlingIneq}, for every $x \in \kerperp{\seminorm{\cdot}} \setminus \{\vectorzeros[n]\}$,
	\begin{align*}
	\WSIP{\mathcal{P}Ax}{\mathcal{P}x} &\leq \|\mathcal{P}x\|\lim_{h \to 0^+} \frac{\|\mathcal{P}x + h\mathcal{P}Ax\| - \|\mathcal{P}x\|}{h} \\
	&= \seminorm{x}\lim_{h \to 0^+} \frac{\seminorm{(I_n + hA)x} - \seminorm{x}}{h} \\
	&\leq \seminorm{x}^2 \lim_{h \to 0^+} \frac{\seminorm{I_n + hA}-1}{h} = \seminorm{x}^2\semimeasure{A},
	\end{align*}
	where the third line follows since $\seminorm{Ax} \leq \seminorm{A}\seminorm{x}$ for every $x \in \kerperp{\seminorm{\cdot}}$. This proves this inequality. Regarding the other direction, for $v \notin \kernel \seminorm{\cdot}$, define $\Omega(v) = \WSIP{\mathcal{P}Av}{\mathcal{P}v}/\seminorm{v}^2$.Then for every $v \notin \kernel \seminorm{\cdot}$,
	\begin{align*}
	\seminorm{(I_n - hA)v} &\geq \frac{1}{\seminorm{v}}\WSIP{\mathcal{P}(I_n-hA)v}{\mathcal{P}v} \\&\kern-4ex\geq (1 - h\Omega(v))\seminorm{v} \geq (1 - h\sup_{v \notin \kernel \seminorm{\cdot}}\Omega(v))\seminorm{v},
	\end{align*} 
	where the first inequality holds because of Cauchy-Schwarz, the second because of subadditivity of the weak pairing, and the final one because $-h < 0$. Then for small enough $h > 0$, $I_n - hA$ is invertible and given by $(I_n - hA)^{-1} = I_n + hA + h^2A^2(I_n - hA)^{-1}$, which implies
	\begin{equation*}
	\seminorm{(I_n + hA)v} \leq \seminorm{(I_n - hA)^{-1}v} + h^2\seminorm{A^2(I_n - hA)^{-1}v}, 
	\end{equation*}
	where the implication holds for all $v \in \real^n$ because of the triangle inequality for semi-norms. Moreover, let $x \in \kerperp{\seminorm{\cdot}} \setminus \{\vectorzeros[n]\}$ and define $v = (I_n - hA)^{-1}x$. To see that $v \notin \kernel \seminorm{\cdot}$, suppose for contradiction's sake that $v \in \kernel \seminorm{\cdot}$. Then since $(I_n - hA)v = x$, we have $v - hAv = x$. But $hAv \in \kernel \seminorm{\cdot}$ since, by assumption, $A \kernel \seminorm{\cdot} \subseteq \kernel \seminorm{\cdot}$. And since $\kernel \seminorm{\cdot}$ is a subspace, $v - hAv = x \in \kernel \seminorm{\cdot}$, a contraction. Therefore $v \notin \kernel \seminorm{\cdot}$. Then for $h > 0$, we have
	\begin{multline}\label{eq:semiinverse}
	\frac{\seminorm{(I_n - hA)^{-1}x}}{\seminorm{x}} = \frac{\seminorm{v}}{\seminorm{(I_n - hA)v}} \\\leq \frac{1}{1 - h\sup_{v \notin \kernel \seminorm{\cdot}}\Omega(v)}.
	\end{multline}
	Then
	\begin{align*}
	&\semimeasure{A} = \lim_{h \to 0^+} \sup_{x \in \kerperp{\seminorm{\cdot}} \setminus \{\vectorzeros[n]\}} \frac{\seminorm{(I_n + hA)x}/\seminorm{x} - 1}{h} \\
	&\leq \lim_{h \to 0^+} \sup_{x \in \kerperp{\seminorm{\cdot}} \setminus \{\vectorzeros[n]\}} \Big(\frac{\seminorm{(I_n - hA)^{-1}x}/\seminorm{x} - 1}{h} \\ & \qquad\qquad\qquad\qquad\qquad + \frac{h^2\seminorm{A^2(I_n - hA)^{-1}x}}{h\seminorm{x}}\Big) \\
	&\leq \lim_{h \to 0^+} \sup_{x \in \kerperp{\seminorm{\cdot}} \setminus \{\vectorzeros[n]\}} \frac{\seminorm{(I_n - hA)^{-1}x}/\seminorm{x} - 1}{h} \\
	&\leq \lim_{h \to 0^+} \frac{1}{h}\Big(\frac{1}{1 - h \sup\nolimits_{v \notin \kernel \seminorm{\cdot}}\Omega(v)} - 1\Big) \\&= \sup_{x \notin \kernel \seminorm{\cdot}} \frac{\WSIP{\mathcal{P}Ax}{\mathcal{P}x}}{\seminorm{x}^2},
	\end{align*}
	where the second line holds because of the triangle inequality, the third line holds because of the subadditivity of the supremum, and the fourth line holds because the inequality in~\eqref{eq:semiinverse} holds for all $x \in \kerperp{\seminorm{\cdot}} \setminus \{\vectorzeros[n]\}$. This proves the result.
\end{proof}

\begin{lemma}[Coppel's differential inequality for semi-norms]\label{lemma:Coppel}
	Let $\seminorm{\cdot} = \|\mathcal{P}\cdot\|$ be a seminorm on $\real^n$. Consider the dynamical system $\dot{x} = A(t,x)x$ where $(t,x) \mapsto A(t,x) \in \real^{n \times n}$ is continuous in $(t,x)$. Moreover, assume for all $x \in \real^n, t \in \realnonnegative$, $\kernel \seminorm{\cdot}$ is invariant under $A(t,x)$ in the sense that $A(t,x)\kernel \seminorm{\cdot} \subseteq \kernel\seminorm{\cdot}$. Then
	$$D^+\seminorm{x(t)} \leq \semimeasure{A(t,x(t))}\seminorm{x(t)}.$$
\end{lemma}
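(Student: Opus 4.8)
The plan is to mimic the proof of the classical Coppel differential inequality, Lemma~\ref{Coppel}, replacing the norm $\|\cdot\|$ and the matrix measure $\mu$ throughout by the semi-norm $\seminorm{\cdot}$ and the matrix semi-measure $\semimeasure{\cdot}$. The only genuinely new ingredient is a submultiplicativity estimate for the induced semi-norm that is valid on \emph{all} of $\real^n$, and not merely on $\kerperp{\seminorm{\cdot}}$ where the induced semi-norm is defined; it is precisely here that the invariance hypothesis $A(t,x)\kernel\seminorm{\cdot}\subseteq\kernel\seminorm{\cdot}$ enters.

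First I would fix $t$, abbreviate $A=A(t,x(t))$, and use differentiability of the solution together with the dynamics to expand $x(t+h) = x(t) + h\dot{x}(t) + o(h) = (I_n + hA)x(t) + o(h)$ as $h\to 0^+$. Applying the triangle inequality for semi-norms gives
$$\seminorm{x(t+h)} \leq \seminorm{(I_n + hA)x(t)} + \seminorm{o(h)}.$$
The second step is the key estimate $\seminorm{(I_n + hA)x(t)} \leq \seminorm{I_n + hA}\,\seminorm{x(t)}$. Since $I_n$ preserves $\kernel\seminorm{\cdot}$, $A$ preserves it by hypothesis, and $\kernel\seminorm{\cdot}$ is a subspace, the matrix $I_n + hA$ also maps $\kernel\seminorm{\cdot}$ into itself. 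Decomposing $x(t) = u + w$ with $u \in \kernel\seminorm{\cdot}$ and $w \in \kerperp{\seminorm{\cdot}}$, one has $\seminorm{x(t)} = \seminorm{w}$ and $(I_n + hA)u \in \kernel\seminorm{\cdot}$, so that
$$\seminorm{(I_n + hA)x(t)} \leq \seminorm{(I_n + hA)u} + \seminorm{(I_n + hA)w} = \seminorm{(I_n + hA)w} \leq \seminorm{I_n + hA}\,\seminorm{w},$$
where the final inequality is the definition of the induced semi-norm applied to $w \in \kerperp{\seminorm{\cdot}}$.

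Combining the two displays, dividing by $h > 0$, and taking $\limsup_{h\to 0^+}$, I would conclude
$$D^+\seminorm{x(t)} \leq \Big(\lim_{h\to 0^+}\frac{\seminorm{I_n + hA} - 1}{h}\Big)\seminorm{x(t)} = \semimeasure{A}\seminorm{x(t)},$$
where the error term is discarded because $\seminorm{o(h)} = \|\mathcal{P}\,o(h)\| \leq \|\mathcal{P}\|\,\|o(h)\|$, so that $\seminorm{o(h)}/h \to 0$, and where the remaining limit is exactly the definition of $\semimeasure{A}$. I expect the submultiplicativity step to be the main obstacle: it is the sole place the kernel-invariance assumption is used, and without it the term $(I_n + hA)u$ could escape $\kernel\seminorm{\cdot}$ and contribute a nonzero semi-norm, breaking the bound; the remaining manipulations are routine and parallel the proof of Lemma~\ref{Coppel}.
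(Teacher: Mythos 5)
Your proof is correct, but it takes a genuinely different route from the paper's. The paper splits into two cases: when $\seminorm{x(t)}=0$ it uses the invariance hypothesis to conclude $A(t,x(t))x(t)\in\kernel\seminorm{\cdot}$ and bounds the Dini derivative directly by $\seminorm{A(t,x(t))x(t)}=0$; when $\seminorm{x(t)}\neq 0$ it applies the curve norm derivative formula for Deimling pairings, Lemma~\ref{lemma:DeimlingDerivative}, to $\mathcal{P}x(t)$ and then invokes Lumer's equality for matrix semi-measures, Lemma~\ref{lemma:semiLumer} (which is where the invariance hypothesis enters in the paper's argument, since that lemma requires $A\kernel\seminorm{\cdot}\subseteq\kernel\seminorm{\cdot}$). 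You instead run the classical Coppel argument from first principles: Taylor expansion, triangle inequality for semi-norms, and the submultiplicativity estimate $\seminorm{(I_n+hA)x}\leq\seminorm{I_n+hA}\,\seminorm{x}$ on all of $\real^n$, which you prove via the decomposition $x=u+w$ with $u\in\kernel\seminorm{\cdot}$, $w\in\kerperp{\seminorm{\cdot}}$, using invariance to annihilate the kernel component; this is essentially one of the induced semi-norm properties the paper cites from~\cite{SJ-PCV-FB:19q} rather than proves. Your route buys elementarity and uniformity: it needs neither the Deimling-pairing machinery nor Lemma~\ref{lemma:semiLumer}, it avoids the case split on whether $\seminorm{x(t)}$ vanishes, and it exactly parallels the proof of the classical Lemma~\ref{Coppel}. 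The paper's route buys economy within its own framework: it reuses lemmas already established and reinforces the paper's theme that (weak) pairings are the natural tool for semi-contraction analysis. Both arguments use the kernel-invariance hypothesis in an essential way, just in different places.
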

\begin{proof}
	First assume that $\seminorm{x(t)} = 0$ for some $t \geq 0$. Then $x(t) \in \kernel \seminorm{\cdot}$, and by assumption $A(t,x)x(t) \in \kernel \seminorm{\cdot}$. Then by computation:
	\begin{align*}
	D^+\seminorm{x(t)} &= \limsup_{h \to 0^+}\frac{\seminorm{x(t+h)} - \seminorm{x(t)}}{h} \\&= \limsup_{h \to 0^+}\frac{\seminorm{x(t) + hA(t,x(t))x(t)}}{h} \\
	&\leq \limsup_{h \to 0^+} \frac{|h|\seminorm{A(t,x)x(t)}}{h} = \seminorm{A(t,x)x(t)} = 0.
	\end{align*}
	So the result holds in this case. So assume $\seminorm{x(t)} \neq 0$, i.e., $x(t) \notin \kernel \seminorm{\cdot}$. Then apply the curve norm derivative formula for Deimling pairings, Lemma~\ref{lemma:DeimlingDerivative} to $\seminorm{x(t)}$:
	\begin{align*}
	\seminorm{x(t)}D^+\seminorm{x(t)} &= (\mathcal{P}A(t,x(t))x(t),\mathcal{P}x(t))_+  \\ \implies D^+\seminorm{x(t)} &= \frac{(\mathcal{P}A(t,x(t))x(t),\mathcal{P}x(t))_+}{\seminorm{x(t)}^2}\seminorm{x(t)} \\
	&\leq \semimeasure{A(t,x(t))}\seminorm{x(t)},
	\end{align*}
	where the inequality holds by Lemma~\ref{lemma:semiLumer} since $(\cdot,\cdot)_+$ is a weak pairing that trivially satisfies Deimling's inequality,~\eqref{eq:DeimlingIneq}.
\end{proof}

We now prove semi-contraction theorems analogous to the contraction theorem for continuously differentiable vector fields, Theorem~\ref{thm:general}, and the equilibrium contraction theorem, Theorem~\ref{aveContr}.

\begin{theorem}[Semi-contraction theorem for continuously differentiable vector fields]
	\label{thm:semi}
	Consider the dynamical system $\dot{x} = f(t,x),$ which is continuously differentiable in $x$ with Jacobian $\jac{f}$. Let $\seminorm{\cdot} = \|\mathcal{P}\cdot\|$ be a semi-norm on $\real^n$. Let $\WSIP{\cdot}{\cdot}$ be a weak pairing on $\kerperp{\seminorm{\cdot}}$ satisfying Deimling's inequality,~\eqref{eq:DeimlingIneq}. Moreover, assume $\jac{f}(t,x)\kernel\seminorm{\cdot} \subseteq \kernel \seminorm{\cdot}$ for all $t \geq 0, x \in \real^n$. Then, for $b \in \real$, the following statements are equivalent:
	\begin{enumerate}
		\item\label{semi:0}
		$\semimeasure{\jac{f}(t,x)} \leq b$, for all $x \in \real^n, t \geq 0$,
		\item\label{semi:1} $\WSIP{\mathcal{P}\jac{f}(t,x)v}{\mathcal{P}v} \leq b\seminorm{v}^2$, for all $v \in \R^n, x \in \real^n, t \geq 0$, 
		\item\label{semi:2} $\WSIP{\mathcal{P}(f(t,x) - f(t,y))}{\mathcal{P}(x - y)} \leq b\seminorm{x - y}^2$, for all $x,y \in \real^n, t \geq 0,$
		\item\label{semi:3} $D^+\vert\kern-0.25ex\vert\kern-0.25ex\vert\phi(t,t_0,x_0) - \phi(t,t_0,y_0)\vert\kern-0.25ex\vert\kern-0.25ex\vert \leq b\vert\kern-0.25ex\vert\kern-0.25ex\vert\phi(t,t_0,x_0) - \phi(t,t_0,y_0)\vert\kern-0.25ex\vert\kern-0.25ex\vert,$ for all $x_0,y_0 \in \real^n$, $0 \leq t_0 \leq t$ for which solutions exist,
		\item\label{semi:4} $\vert\kern-0.25ex\vert\kern-0.25ex\vert\phi(t,t_0,x_0) - \phi(t,t_0,y_0)\vert\kern-0.25ex\vert\kern-0.25ex\vert \leq e^{b(t-s)}\vert\kern-0.25ex\vert\kern-0.25ex\vert\phi(s,t_0,x_0) - \phi(s,t_0,y_0)\vert\kern-0.25ex\vert\kern-0.25ex\vert$, for all $x_0,y_0 \in \real^n$, $0 \leq t_0 \leq s \leq t$ for which solutions exist.
	\end{enumerate}
\end{theorem}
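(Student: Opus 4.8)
The plan is to prove the cycle \ref{semi:2}$\Rightarrow$\ref{semi:1}$\Rightarrow$\ref{semi:0}$\Rightarrow$\ref{semi:3}$\Rightarrow$\ref{semi:4}$\Rightarrow$\ref{semi:2}, following the architecture of Theorem~\ref{thm:general} verbatim but substituting the semi-norm machinery of Lemmas~\ref{lemma:semiLumer} and~\ref{lemma:Coppel} for Lumer's equality and Coppel's inequality. For \ref{semi:2}$\Rightarrow$\ref{semi:1}, I would fix $y$, set $x = y + hv$ with $h \to 0^+$, and exploit weak homogeneity (property~\ref{WSIP3}) in the second slot to extract a factor of $h$; dividing by $h^2$ and using continuity of the pairing in its first argument (property~\ref{WSIP1}) yields $\WSIP{\mathcal{P}\jac{f}(t,y)v}{\mathcal{P}v} \leq b\seminorm{v}^2$, exactly as in \ref{ctGen:5}$\Rightarrow$\ref{ctGen:4}. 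For \ref{semi:1}$\Rightarrow$\ref{semi:0}, I would observe that for $v \in \kernel\seminorm{\cdot}$ statement~\ref{semi:1} reads $0 \leq 0$ (since $\mathcal{P}v = \vectorzeros[n]$ and the pairing is weakly homogeneous), so it only constrains $v \notin \kernel\seminorm{\cdot}$; dividing by $\seminorm{v}^2$ and taking the supremum, Lumer's equality for semi-measures, Lemma~\ref{lemma:semiLumer}, gives $\semimeasure{\jac{f}(t,x)} \leq b$. Here the standing hypothesis $\jac{f}(t,x)\kernel\seminorm{\cdot} \subseteq \kernel\seminorm{\cdot}$ is precisely what licenses the invocation of Lemma~\ref{lemma:semiLumer}.

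For \ref{semi:0}$\Rightarrow$\ref{semi:3}, I would set $v(t) = \phi(t,t_0,x_0) - \phi(t,t_0,y_0)$ and apply the mean-value theorem for vector-valued functions to write $\dot{v} = \big(\int_0^1 \jac{f}(t,y(t)+sv(t))\,ds\big)v =: \jacave{f}(t)\,v$. The averaged Jacobian $\jacave{f}(t)$ again maps $\kernel\seminorm{\cdot}$ into itself, so Coppel's inequality for semi-norms, Lemma~\ref{lemma:Coppel}, applies and delivers $D^+\seminorm{v(t)} \leq \semimeasure{\jacave{f}(t)}\seminorm{v(t)}$; subadditivity of the semi-measure, cited from~\cite{SJ-PCV-FB:19q}, then bounds $\semimeasure{\jacave{f}(t)} \leq \int_0^1 \semimeasure{\jac{f}(t,y+sv)}\,ds \leq b$. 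The implication \ref{semi:3}$\Rightarrow$\ref{semi:4} is an immediate application of the nonsmooth \GB inequality, Lemma~\ref{lemma:nonsmooth-GB}, on $[s,t]$ with $m \equiv b$ and $r \equiv 0$. Finally, for \ref{semi:4}$\Rightarrow$\ref{semi:2}, I would expand $\seminorm{\phi(t_0+h,t_0,x_0) - \phi(t_0+h,t_0,y_0)} = \seminorm{x_0 - y_0 + h(f(t_0,x_0) - f(t_0,y_0))} + O(h^2) \leq e^{bh}\seminorm{x_0-y_0}$, subtract $\seminorm{x_0-y_0}$, divide by $h$, and let $h \to 0^+$; Deimling's inequality (property~\ref{def:Deimlinginequality}) applied to the $\mathcal{P}$-images converts the resulting one-sided limit into $\WSIP{\mathcal{P}(f(t_0,x_0)-f(t_0,y_0))}{\mathcal{P}(x_0-y_0)} \leq b\seminorm{x_0-y_0}^2$, with the degenerate case $x_0 - y_0 \in \kernel\seminorm{\cdot}$ handled separately as $0 \leq 0$.

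The main obstacle is the recurring need to control the kernel of the averaged Jacobian in \ref{semi:0}$\Rightarrow$\ref{semi:3}: Lemma~\ref{lemma:Coppel} requires $\jacave{f}(t)\kernel\seminorm{\cdot} \subseteq \kernel\seminorm{\cdot}$, which does not follow automatically from the pointwise invariance of each $\jac{f}(t,y+sv)$ without a short argument. The clean justification is that $\kernel\seminorm{\cdot} = \kernel\mathcal{P}$ is a closed subspace, so for any $w \in \kernel\seminorm{\cdot}$ the integrand $\jac{f}(t,y+sv)w$ lies in $\kernel\seminorm{\cdot}$ for every $s$ and hence so does $\jacave{f}(t)w$; I would state this explicitly. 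A secondary technical point is the subadditivity and its integral form for the matrix semi-measure, which must be cited from~\cite{SJ-PCV-FB:19q} rather than re-derived. Every remaining step is a faithful transcription of Theorem~\ref{thm:general}, with $\|\cdot\|$ replaced by $\seminorm{\cdot}$ and the weak pairing evaluated on $\mathcal{P}$-images.
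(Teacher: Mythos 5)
Your proposal is correct and takes essentially the same route as the paper: every arrow in your cycle \ref{semi:2}$\Rightarrow$\ref{semi:1}$\Rightarrow$\ref{semi:0}$\Rightarrow$\ref{semi:3}$\Rightarrow$\ref{semi:4}$\Rightarrow$\ref{semi:2} is established there by the identical argument (the difference-quotient/weak-homogeneity step, Lemma~\ref{lemma:semiLumer}, the mean-value theorem plus Lemma~\ref{lemma:Coppel} with subadditivity of the semi-measure, the nonsmooth \GB inequality, and the Taylor expansion plus Deimling's inequality, respectively), the only organizational difference being that the paper additionally proves the redundant arrows \ref{semi:0}$\Rightarrow$\ref{semi:1}$\Rightarrow$\ref{semi:2} directly rather than relying on a minimal cycle. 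Your explicit verification that the averaged Jacobian $\int_0^1 \jac{f}(t,y+sv)\,ds$ maps $\kernel\seminorm{\cdot}$ into itself---a hypothesis needed before invoking Lemma~\ref{lemma:Coppel}---is correct and supplies a detail the paper passes over silently.
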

\begin{proof}
	Regarding \ref{semi:0} $\iff$ \ref{semi:1}, the proof follows by Lumer's equality for log semi-norms, Lemma~\ref{lemma:semiLumer}.
	
	Regarding \ref{semi:1} $\implies$ \ref{semi:2}, suppose $\WSIP{\mathcal{P}\jac{f}(t,z)v}{\mathcal{P}v} \leq b\seminorm{v}^2$, for all $v \in \real^n, z \in \real^n, t \geq 0$. Then let $x, y \in \real^n$ and $v = x - y$. By the mean-value theorem for vector-valued functions, we have
	$$f(t,x) - f(t,y) = \left(\int_0^1 \jac{f}(t,y + sv)ds\right)(x - y).$$
	Hence,
	\begin{align*}
	&\WSIP{\mathcal{P}(f(t,x) - f(t,y))}{\mathcal{P}(x-y)} \\&= \WSIP{\mathcal{P}\left(\int_0^1 \jac{f}(t,y + sv)ds\right)(x - y)}{\mathcal{P}(x - y)} \\
	&= \WSIP{\left(\int_0^1 \mathcal{P}\jac{f}(t,y + sv)ds\right)(x - y)}{\mathcal{P}(x - y)} \\
	& \leq \int_{0}^1 \WSIP{\mathcal{P}\jac{f}(t,y + sv)(x - y)}{\mathcal{P}(x -y)}ds \\
	& \leq \int_0^1 b\seminorm{x - y}^2ds = b\seminorm{x-y}^2,
	\end{align*}
	where the third line follows from the sublinearity and continuity in the first argument of the weak pairing. 
	
	Regarding \ref{semi:2} $\implies$ \ref{semi:1}, assume $\WSIP{\mathcal{P}(f(t,x) - f(t,y))}{\mathcal{P}(x-y)} \leq b\seminorm{x-y}^2$, for all $x,y \in \real^n, t \geq 0$. If $x = y$, the result is trivial, so assume $x \neq y$. Fix $y$ and set $x = y + hv$ for an arbitrary $v \in \R^n, h \in \realpositive$. Then substituting
	\begin{align*}
	\WSIP{\mathcal{P}(f(t,y + hv) - f(t,y))}{\mathcal{P}hv} &\leq b\seminorm{hv}^2 \\\implies \quad h\WSIP{\mathcal{P}(f(t,y + hv) - f(t,y))}{\mathcal{P}v} &\leq bh^2\seminorm{v}^2,
	\end{align*}
	by the weak homogeneity of the weak pairing. Dividing by $h^2$ and taking the limit as $h$ goes to zero yields
	\begin{align*}
	\lim_{h \to 0^+} \WSIP{\mathcal{P}\frac{f(t,y + hv) - f(t,y)}{h}}{\mathcal{P}v} &\leq b\seminorm{v}^2  \\\implies \quad \WSIP{\mathcal{P}\jac{f}(t,y)v}{\mathcal{P}v} &\leq b\seminorm{v}^2,
	\end{align*}
	which follows from the continuity of the weak pairing in its first argument. Since $y$, $v$, and $t$ were arbitrary, this completes this implication. 
	
	Regarding \ref{semi:0} $\implies$ \ref{semi:3}, 
	let $x_0,y_0 \in \real^n, t_0 \geq 0$ and let $x(t) = \phi(t,t_0,x_0), y(t) = \phi(t,t_0,y_0)$. Then by the mean-value theorem for vector-valued functions, for $v(t) = x(t) - y(t)$,
	$$\dot{v} = \Big(\int_0^1 \jac{f}(t,y + sv)ds\Big)v.$$
	Then since $\jac{f}(t,x)\kernel \seminorm{\cdot} \subseteq \kernel \seminorm{\cdot}$ for all $t \geq 0, x \in \real^n$, by Coppel's differential inequality for semi-norms, Lemma \ref{lemma:Coppel}, 
	\begin{align*}
	D^+ \seminorm{v(t)} &\leq \semimeasure{\int_0^1 \jac{f}(t,y(t) + sv(t))ds} \seminorm{v(t)} \\
	&\leq \int_0^1 \semimeasure{\jac{f}(t,y(t) + sv(t))} ds \;\seminorm{v(t)} \\
	&\leq \int_0^1 b ds \; \seminorm{v(t)} = b\seminorm{v(t)},
	\end{align*}
	which holds by the subadditivity of log semi-norms.
	
	Regarding \ref{semi:3} $\implies$ \ref{semi:4}, the result follows by the nonsmooth \GB inequality. 
	
	Regarding \ref{semi:4} $\implies$ \ref{semi:2}, let $x_0,y_0 \in C$, $t_0 \geq 0$ be arbitrary. Then for $h \geq 0$,
	\begin{align*}
	&\seminorm{\phi(t_0 + h,t_0,x_0) - \phi(t_0 + h,t_0,y_0)} \\&\qquad = \seminorm{x_0 - y_0 + h(f(t_0,x_0) - f(t_0,y_0))} + O(h^2) \\
	&\qquad \leq e^{bh}\seminorm{x_0 - y_0}.
	\end{align*}
	Subtracting $\seminorm{x_0 - y_0}$ on both sides, dividing by $h > 0$ and taking the limit as $h \to 0^+$, we get
	\begin{align*}
	&\lim_{h \to 0^+} \frac{\seminorm{x_0 - y_0 + h(f(t_0,x_0) - f(t_0,y_0))} - \seminorm{x_0 - y_0}}{h} \\ &\qquad \leq \lim_{h \to 0^+} \frac{e^{bh} - 1}{h}\seminorm{x_0 - y_0}.
	\end{align*}
	Evaluating the right hand side limit gives
	\begin{align*}
	&\lim_{h \to 0^+} \frac{\seminorm{x_0 - y_0 + h(f(t_0,x_0) - f(t_0,y_0))} - \seminorm{x_0 - y_0}}{h} \nonumber \\&\qquad \leq b\seminorm{x_0 - y_0}.
	\end{align*}
	But by the assumption of Deimling's inequality,~\eqref{eq:DeimlingIneq}, multiplying both sides by $\seminorm{x_0 - y_0}$ gives
	$$\WSIP{\mathcal{P}(f(t_0,x_0) - f(t_0,y_0))}{\mathcal{P}(x_0-y_0)} \leq b\seminorm{x_0 - y_0}^2.$$
	Since $t_0, x_0$ and $y_0$ were arbitrary, the result holds.
\end{proof}

\begin{remark}
	In~\cite[Theorem~13]{SJ-PCV-FB:19q},~\ref{semi:0} $\implies$~\ref{semi:4} is proved. Theorem~\ref{thm:semi} generalizes this result and provides necessary and sufficient conditions for semi-contraction analogous to those in Theorem~\ref{thm:general} including a one-sided Lipschitz condition on the vector field via the semi-norm.
\end{remark}

\begin{theorem}[Subspace contraction equivalences]
	\label{thm:subspace}
	Consider the dynamical system $\dot{x} = f(t,x),$ which is continuous in $(t,x)$. Let $\seminorm{\cdot} = \|\mathcal{P}\cdot\|$ be a semi-norm on $\real^n$. Let $\WSIP{\cdot}{\cdot}$ be a weak pairing on $\kerperp{\seminorm{\cdot}}$ satisfying Deimling's inequality,~\eqref{eq:DeimlingIneq}, and the curve norm derivative formula,~\eqref{eq:curvenormderivative}. Moreover, assume that there exists $x^* \in \kerperp{\seminorm{\cdot}}$ such that $f(t,x^* + \kernel \seminorm{\cdot}) \subseteq \kernel \seminorm{\cdot}$ for all $t \geq 0$. Then, for $b \in \real$, the following statements are equivalent:
	\begin{enumerate} 
		\item\label{subspace:1} $\WSIP{\mathcal{P}f(t,x)}{\mathcal{P}(x - x^*)} \leq b\seminorm{x - x^*}^2$, for all $x \in \real^n, t \geq 0,$
		\item\label{subspace:2} $D^+\seminorm{\phi(t,t_0,x_0) - x^*} \leq b\seminorm{\phi(t,t_0,x_0) - x^*},$ for all $x_0$, $0 \leq t_0 \leq t$ for which solutions exist,
		\item\label{subspace:3} $\seminorm{\phi(t,t_0,x_0) - x^*} \leq e^{b(t-s)}\seminorm{\phi(s,t_0,x_0) - x^*}$, for all $x_0 \in \real^n$, $0 \leq t_0 \leq s \leq t$ for which solutions exist.
	\end{enumerate}
	Moreover, suppose that $f(t,x)$ is continuously differentiable in $x$ and that $\semimeasure{\jac{f}(t,x)} \leq b$ for all $(t,x)$. Then statements~\ref{subspace:1},~\ref{subspace:2}, and~\ref{subspace:3} hold.
\end{theorem}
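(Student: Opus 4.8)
The plan is to mirror the proof of the equilibrium contraction theorem, Theorem~\ref{aveContr}, with the norm replaced by the semi-norm $\seminorm{\cdot}=\|\mathcal{P}\cdot\|$ and with all pairings applied to the image curve $t\mapsto \mathcal{P}(\phi(t,t_0,x_0)-x^*)$, which lives in the space carrying the weak pairing $\WSIP{\cdot}{\cdot}$ and its compatible norm $\|\cdot\|$. I would establish the four implications (subspace:1)$\implies$(subspace:3), (subspace:2)$\implies$(subspace:3), (subspace:3)$\implies$(subspace:1), and (subspace:3)$\implies$(subspace:2); together these give (subspace:1)$\iff$(subspace:3)$\iff$(subspace:2). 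The standing hypothesis $f(t,x^*+\kernel\seminorm{\cdot})\subseteq\kernel\seminorm{\cdot}$ will be used repeatedly to control the degenerate set where $\seminorm{\cdot}$ vanishes.

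For (subspace:1)$\implies$(subspace:3), fix $x_0,t_0$, set $v(t)=\phi(t,t_0,x_0)$, and apply the curve norm derivative formula to the differentiable curve $\mathcal{P}(v(t)-x^*)$, whose derivative is $\mathcal{P}f(t,v(t))$. This yields $\seminorm{v(t)-x^*}D^+\seminorm{v(t)-x^*}=\WSIP{\mathcal{P}f(t,v(t))}{\mathcal{P}(v(t)-x^*)}\leq b\seminorm{v(t)-x^*}^2$ for almost every $t$. Dividing by $\seminorm{v(t)-x^*}$ on the set where it is positive gives $D^+\seminorm{v(t)-x^*}\leq b\seminorm{v(t)-x^*}$, and the nonsmooth \GB inequality, Lemma~\ref{lemma:nonsmooth-GB}, applied on $[s,t]$ gives (subspace:3). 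The implication (subspace:2)$\implies$(subspace:3) is a direct application of Lemma~\ref{lemma:nonsmooth-GB}.

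For (subspace:3)$\implies$(subspace:1), I repeat the Taylor-expansion argument of Theorem~\ref{aveContr}: from $\seminorm{\phi(t_0+h,t_0,x_0)-x^*}=\seminorm{(x_0-x^*)+hf(t_0,x_0)}+O(h^2)\leq e^{bh}\seminorm{x_0-x^*}$, I divide by $h$, let $h\to 0^+$, multiply by $\seminorm{x_0-x^*}=\|\mathcal{P}(x_0-x^*)\|$, and invoke Deimling's inequality for $\WSIP{\cdot}{\cdot}$ to conclude $\WSIP{\mathcal{P}f(t_0,x_0)}{\mathcal{P}(x_0-x^*)}\leq b\seminorm{x_0-x^*}^2$. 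For (subspace:3)$\implies$(subspace:2), I run the same computation with the Deimling pairing $(\cdot,\cdot)_+$ in place of $\WSIP{\cdot}{\cdot}$, obtaining $(\mathcal{P}f(t,x),\mathcal{P}(x-x^*))_+\leq b\seminorm{x-x^*}^2$ for all $x,t$; since the Deimling curve norm derivative formula, Lemma~\ref{lemma:DeimlingDerivative}, holds with equality for all $t$, dividing gives the Dini bound (subspace:2) at every $t$ for which the solution exists. For the sufficient condition, I use the mean value theorem for vector-valued functions and $\mathcal{P}f(t,x^*)=0$ (a consequence of $f(t,x^*)\in\kernel\seminorm{\cdot}$) to write $\mathcal{P}f(t,x)=\mathcal{P}\big(\int_0^1 \jac{f}(t,x^*+s(x-x^*))\,ds\big)(x-x^*)$, then pull the weak pairing inside the integral via subadditivity and continuity of the first argument and bound each integrand by $\semimeasure{\jac{f}(t,\cdot)}\seminorm{x-x^*}^2\leq b\seminorm{x-x^*}^2$ using Lumer's equality for matrix semi-measures, Lemma~\ref{lemma:semiLumer}, giving (subspace:1).

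The main obstacle is the degeneracy of the semi-norm: unlike the genuine-norm case of Theorem~\ref{aveContr}, $\seminorm{v(t)-x^*}$ can vanish while $v(t)\neq x^*$, so the division steps and the passage from the curve-derivative identity to the Dini inequality must be justified on the set where $\seminorm{v(t)-x^*}=0$. This is precisely where the invariance hypothesis enters: on that set $v(t)\in x^*+\kernel\seminorm{\cdot}$, hence $\dot v(t)=f(t,v(t))\in\kernel\seminorm{\cdot}$, so that $\mathcal{P}\dot v(t)=0$ and $D^+\seminorm{v(t)-x^*}=0=b\seminorm{v(t)-x^*}$, and the inequality holds trivially there. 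The same invariance, differentiated along kernel directions, is what guarantees $\jac{f}(t,x)\kernel\seminorm{\cdot}\subseteq\kernel\seminorm{\cdot}$ and thereby makes the matrix semi-measure $\semimeasure{\jac{f}(t,x)}$ compatible with the weak-pairing characterization of Lemma~\ref{lemma:semiLumer} used in the final step.
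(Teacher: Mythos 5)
Your handling of the three-way equivalence \ref{subspace:1}$\iff$\ref{subspace:2}$\iff$\ref{subspace:3} matches the paper's proof in all essentials: the curve norm derivative formula off the degenerate set, the case $\seminorm{v(t)-x^*}=0$ resolved via the invariance hypothesis (there $\seminorm{f(t,v(t))}=0$, so the Dini derivative vanishes), the Taylor-expansion argument with Deimling's inequality for \ref{subspace:3}$\implies$\ref{subspace:1}, and the Deimling pairing with Lemma~\ref{lemma:DeimlingDerivative} for \ref{subspace:3}$\implies$\ref{subspace:2}. The genuine gap is in your final step, the sufficiency of $\semimeasure{\jac{f}(t,x)}\leq b$. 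You write $\mathcal{P}f(t,x)=\mathcal{P}\bar{A}(t,x)(x-x^*)$ with $\bar{A}(t,x)=\int_0^1\jac{f}(t,x^*+s(x-x^*))\,ds$ and invoke Lemma~\ref{lemma:semiLumer} to bound the pairing by $\semimeasure{\bar{A}(t,x)}\seminorm{x-x^*}^2$. That lemma --- specifically the characterization~\eqref{eq:ker}, which is the one you need since $x-x^*$ generally has a nonzero component in $\kernel\seminorm{\cdot}$ --- applies only to matrices $A$ with $A\,\kernel\seminorm{\cdot}\subseteq\kernel\seminorm{\cdot}$, and your justification of this hypothesis is false: differentiating the standing invariance $f(t,x^*+\kernel\seminorm{\cdot})\subseteq\kernel\seminorm{\cdot}$ along kernel directions yields $\jac{f}(t,y)\,\kernel\seminorm{\cdot}\subseteq\kernel\seminorm{\cdot}$ only at points $y\in x^*+\kernel\seminorm{\cdot}$, whereas $\bar{A}(t,x)$ averages Jacobians at points $x^*+s(x-x^*)$ off that subspace. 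The failure is real, not a technicality: take $\mathcal{P}=\diag(1,0)$ on $\real^2$, so $\seminorm{x}=|x_1|$, with $x^*=\vectorzeros[2]$ and $f(x_1,x_2)=(-x_1+\sin^2(x_1)\sin(x_2),\,-x_2)$. Then the standing hypothesis holds and $\semimeasure{\jac{f}(x)}=-1+\sin(2x_1)\sin(x_2)\leq 0$ for all $x$, yet $\jac{f}(x)\,\kernel\seminorm{\cdot}\not\subseteq\kernel\seminorm{\cdot}$ at generic $x$, and the inequality your argument needs, $\WSIP{\mathcal{P}\jac{f}(y)z}{\mathcal{P}z}\leq\semimeasure{\jac{f}(y)}\seminorm{z}^2$ for $z\notin\kerperp{\seminorm{\cdot}}$, reduces to $\sin^2(y_1)\cos(y_2)\,z_1z_2\leq 0$, which fails for suitable signs of $z_1,z_2$.

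The claim itself is true, and there are two ways to close the gap. The paper simply cites \cite[Theorem~13(ii)]{SJ-PCV-FB:19q} for $\semimeasure{\jac{f}(t,x)}\leq b\implies$\ref{subspace:3}, so your direct route is a genuinely different (and, once repaired, self-contained) approach. The repair is to integrate along a different segment: decompose $x-x^*=w+k$ with $w\in\kerperp{\seminorm{\cdot}}$, $k\in\kernel\seminorm{\cdot}$, and connect $x$ to $x^*+k$ rather than to $x^*$. The mean value theorem gives $f(t,x)-f(t,x^*+k)=\bar{B}(t,x)w$ with $\bar{B}(t,x)=\int_0^1\jac{f}(t,x^*+k+sw)\,ds$; since $f(t,x^*+k)\in\kernel\seminorm{\cdot}$ by the standing hypothesis, $\mathcal{P}f(t,x)=\mathcal{P}\bar{B}(t,x)w$, while $\mathcal{P}(x-x^*)=\mathcal{P}w$ and $\seminorm{x-x^*}=\seminorm{w}$. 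Now you need only the one-directional inequality $\WSIP{\mathcal{P}Aw}{\mathcal{P}w}\leq\semimeasure{A}\seminorm{w}^2$ for $w\in\kerperp{\seminorm{\cdot}}$, and this holds for \emph{every} matrix $A$: it is exactly the first half of the proof of Lemma~\ref{lemma:semiLumer} (Deimling's inequality together with $\seminorm{Bv}\leq\seminorm{B}\seminorm{v}$ for $v\in\kerperp{\seminorm{\cdot}}$), which never uses kernel invariance. Combining this with the step that pulls the weak pairing inside the integral (subadditivity and continuity in the first argument, as in the proof of Theorem~\ref{thm:semi}) and with subadditivity of the semi-measure gives \ref{subspace:1} with constant $b$; the already-proved equivalences then yield \ref{subspace:2} and \ref{subspace:3}.
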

\begin{proof}
	Regarding \ref{subspace:1} $\implies$ \ref{subspace:3}, 
	let $x_0 \in \real^n, t_0 \geq 0$ and let $x(t) = \phi(t,t_0,x_0), y(t) = \phi(t,t_0,y_0)$. Then we have two cases. If there exists $t \geq t_0$ such that $\seminorm{x(t) - x^*} = 0$, then note that $x(t) - x^* \in \kernel \seminorm{\cdot}$, which implies $x(t) \in x^* + \kernel \seminorm{\cdot}$, which further implies $\seminorm{f(t,x(t))} = 0$ by assumption. Then we compute
	\begin{align*}
	D^+\seminorm{x(t) - x^*} &= \limsup_{h \to 0^+} \frac{\seminorm{x(t+h) - x^*} - \seminorm{x(t) - x^*}}{h} \\
	&\!\!\!\!\!\!= \limsup_{h \to 0^+} \frac{\seminorm{x(t) - x^* + hf(t,x(t))}}{h} \\
	&\!\!\!\!\!\!\leq \limsup_{h \to 0^+} \frac{|h|\seminorm{f(t,x(t))}}{h} = \seminorm{f(t,x(t))} = 0.
	\end{align*}
	Then for all $t \geq t_0$ for which $\seminorm{x(t) - x^*} \neq 0$, apply the curve norm derivative to get
	\begin{align*}
	\seminorm{x(t) - x^*}D^+\seminorm{x(t) - x^*} &= \WSIP{\mathcal{P}f(t,x(t))}{\mathcal{P}(x(t) - x^*)} \\
	\implies \quad D^+\seminorm{x(t) - x^*} &\leq b\seminorm{x(t) - x^*},
	\end{align*}
	for almost every $t \geq 0$. Then applying the nonsmooth \GB inequality, Lemma~\ref{lemma:nonsmooth-GB} gives the desired result.
		
	Regarding \ref{subspace:2} $\implies$ \ref{subspace:3}, the result follows by the nonsmooth \GB inequality. 
	
	Regarding \ref{subspace:3} $\implies$ \ref{subspace:1}, let $x_0 \in C$, $t_0 \geq 0$ be arbitrary. Then for $h \geq 0$,
	\begin{align*}
	&\seminorm{\phi(t_0 + h,t_0,x_0) - x^*} \\&\qquad = \seminorm{x_0 - x^* + hf(t_0,x_0)} + O(h^2) \\
	&\qquad \leq e^{bh}\seminorm{x_0 - x^*}.
	\end{align*}
	Subtracting $\seminorm{x_0 - x^*}$ on both sides, dividing by $h > 0$ and taking the limit as $h \to 0^+$, we get
	\begin{align*}
	&\lim_{h \to 0^+} \frac{\seminorm{x_0 - x^* + hf(t_0,x_0)} - \seminorm{x_0 - x^*}}{h} \\ &\qquad \leq \lim_{h \to 0^+} \frac{e^{bh} - 1}{h}\seminorm{x_0 - x^*}.
	\end{align*}
	Evaluating the right hand side limit gives
	\begin{align*}
	&\lim_{h \to 0^+} \frac{\seminorm{x_0 - x^* + hf(t_0,x_0)} - \seminorm{x_0 - x^*}}{h} \nonumber \\&\qquad \leq b\seminorm{x_0 - x^*}.
	\end{align*}
	But by the assumption of Deimling's inequality,~\eqref{eq:DeimlingIneq}, multiplying both sides by $\seminorm{x_0 - x^*}$ gives
	$$\WSIP{\mathcal{P}f(t_0,x_0)}{\mathcal{P}(x_0-x^*)} \leq b\seminorm{x_0 - x^*}^2.$$
	Since $t_0$ and $x_0$ were arbitrary, the result holds.
	
	Regarding \ref{subspace:3} $\implies$ \ref{subspace:2}, following the proof of \ref{subspace:3} $\implies$ \ref{subspace:1}, the inequality
	\begin{equation}
	(\mathcal{P}f(t,x), \mathcal{P}(x - x^*))_+ \leq b\seminorm{x - x^*}^2,
	\end{equation}
	holds for all $x \in \real^n, t \geq 0$, where the Deimling pairing is defined on $\kerperp{\seminorm{\cdot}}$. Then applying the curve norm derivative for the Deimling pairing, Lemma~\ref{lemma:DeimlingDerivative} on $\seminorm{\phi(t,t_0,x_0) - x^*}$ proves the result.
	
	Further, suppose $f(t,x)$ is continuously differentiable and $\semimeasure{\jac{f}(t,x)} \leq b$ for all $x \in \real^n, t \geq 0$. Then~\cite[Theorem~13(ii)]{SJ-PCV-FB:19q} implies that~\ref{subspace:3} holds.
\end{proof}
\begin{remark}
	Subspace contraction is a form of equilibrium contraction for semi-contracting systems. In~\cite[Theorem~13]{SJ-PCV-FB:19q}, $\semimeasure{\jac{f}(t,x)} \leq b \implies$~\ref{subspace:3} is proved. However, Theorem~\ref{thm:subspace} gives necessary and sufficient conditions analogous to those in Theorem~\ref{aveContr} and demonstrates that weak pairings are a suitable tool for checking semi-contraction and subspace contraction.
\end{remark}
\end{arxiv}

\bibliography{alias,Main,FB}

\begin{tac}
\begin{IEEEbiography}
	[{\includegraphics[width=1in,height=1.25in,clip,keepaspectratio]{alexander-davydov}}]{Alexander Davydov}
	(S'21) is a a PhD student at the University of California, Santa Barbara with the Mechanical Engineering Department and the Center for Control, Dynamical Systems, and Computation. He received B.S. degrees in Mechanical Engineering and Mathematics from the University of Maryland. His research interests include the analysis and control of nonlinear network systems with applications to machine learning, neural networks, and multi-robot systems.
\end{IEEEbiography}

\begin{IEEEbiography}
	[{\includegraphics[width=1in,height=1.25in,clip,keepaspectratio]{saber-jafarpour}}]{Saber Jafarpour}
	(M’16) is a Postdoctoral researcher with the Mechanical Engineering Department and the Center for Control, Dynamical Systems
	and Computation at the University of California,
	Santa Barbara. He received his Ph.D. in 2016 from
	the Department of Mathematics and Statistics at
	Queen’s University. His research interests include
	analysis of network systems with application to
	power grids and geometric control theory.
\end{IEEEbiography}

\begin{IEEEbiography}
	[{\includegraphics[width=1in,height=1.25in,clip,keepaspectratio]{francesco-bullo}}]{Francesco Bullo}
	(S’95–M’99–SM’03–F’10) is a Professor with the Mechanical Engineering Department and the Center for Control, Dynamical Systems and Computation at the University of California, Santa Barbara. He was previously associated with the University of Padova (Laurea degree in Electrical Engineering, 1994), the California Institute of Technology (Ph.D. degree in Control and Dynamical Systems, 1999), and the University of Illinois. He served on the editorial boards of IEEE, SIAM, and ESAIM journals and as IEEE CSS President. His research interests focus on network systems and distributed control with application to robotic coordination, power grids and social networks. He is the coauthor of “Geometric Control of Mechanical Systems” (Springer, 2004), “Distributed Control of Robotic Networks” (Princeton, 2009), and “Lectures on Network Systems” (Kindle Direct Publishing, 2022, v1.6). He received best paper awards for his work in IEEE Control Systems, Automatica, SIAM Journal on Control and Optimization, IEEE Transactions on Circuits and Systems, and IEEE Transactions on Control of Network Systems. He is a Fellow of IEEE, IFAC, and SIAM.
\end{IEEEbiography}
\end{tac}
      
\begin{extensions}
	\clearpage
	
	\subsection{Future directions}
	The following is a list of possible future directions, theoretical
	extensions, and applications of this work.
	\begin{enumerate}
		\item DONE: Monotone case, i.e., either $\jac{f}$ is Metzler or $f$
		satisfies the Kamke-M\"uller conditions. This extension has been
		partially explored by Saber.
		
		\item PARTLY DONE??? Semi-contraction. Can the theory of WSIPs extend to
		weight matrices that are rank deficient? If the semi-norm defines a norm
		on a lower-dimensional vector space, Lumer's lemma should apply on that
		subspace. Then maybe we can lift the SIP/WSIP to the higher space to do
		computations.
		
		\begin{itemize}
			\item Semi-contraction of neural ODEs to realistic, known subspaces. For
			instance, take the problem of trying to learn flows in an
			infrastructure network. Then we know that some commodity must be
			preserved, so solutions must live in $\vectorones[n]^\perp$. Thus, have
			the neural ODE be semi-contracting with respect to that subspace.
			
			\item Cluster synchronization. Either related to Mario Di Bernardo's work
			or recently Jeff Moehlis's work on clustering neurons to a splay state.
		\end{itemize}
		
		\item Contraction on Riemannian and Finsler manifolds. How does the theory
		of WSIPs extend to state-dependent weight matrices? Can we come up with a
		good example for which this machinery is necessary? Possibly some sort of
		Kuramoto model on the $n$-torus?
		
		\item Contraction on Banach spaces. Lumer's lemma holds for complete normed
		spaces, i.e., Banach spaces. Pedro had some results for contraction on
		Hilbert spaces, but it seems like many results from SIP and WSIP theory
		should extend in an analogous way to infinite-dimensional linear spaces.
		
		\item Computing equilibrium points for contracting systems. For globally
		Lipschitz vector fields which are contracting with respect to the
		$\ell_2$ norm, it is well-known that a constant stepsize forward Euler
		method converges to the equilibrium exponentially quickly and this
		optimal stepsize may be computed. Equivalent results do not seem to be
		known for vector fields which are contracting with respect to
		non-Euclidean norms.
		
		\item Contraction for discontinuous dynamical systems and differential
		inclusions. We can still define the one-sided Lipschitz condition for
		contraction for discontinuous dynamical systems and differential
		inclusions. Does this guarantee incremental stability between Filippov
		solutions? Is the condition still necessary and sufficient? Likely it is
		sufficient, but may not be necessary. A Filippov solution need only be
		absolutely continuous, so the proof given in Theorem \ref{thm:general}
		does not apply since we cannot Taylor expand $x(t+h)$ and $y(t+h)$.
		\begin{itemize}
			\item Can we study negative gradient and subgradient flows for nonsmooth
			potential functions such as $V(x) = \|x\|_1$ or $\|x\|_{\infty}$? Are
			their negative gradient flows at least weakly contracting?
			\item Optimization on manifolds? I have not yet fully understood this
			application, but this could certainly be a natural extension of any
			optimization results we find.

                      \item May 2021: Filippov, page 5. Can we prove
                        uniquness for discontinuous vector fields?
                        
		\end{itemize}
	      \item Contraction for regression and regularization.
              \item is it worth defining $\operatorname{osL}(f)$?
	\end{enumerate}
	
	\newpage
	
	Suppose that $x=(x_1,\ldots,x_n)$ is a random variable in $\real^n$
	such that, for every $i\in \{1,\ldots,n\}$, we have $x_i \in
	\mathcal{N}(0,\sigma)$. Then we have
	\begin{align*}
	\mathbb{E}(\|x\|^2_2) &= \sum_{i=1}^{n}\mathbb{E}(x^2_i) =
	\sum_{i=1}^{n}\sigma = n\sigma.\\
	\mathbb{E}(\|x\|^2_{\infty}) &= \mathbb{E}(\max_{i}|x_i|^2) =
	\max_{i}  \mathbb{E}(|x_i|^2) =\sigma
	\end{align*}

\end{extensions}

\end{document}